\crefname{hypothesis}{Hypothesis}{Hypotheses}
\title{Computation of sum of squares polynomials from data points} 
\author{Bruno Despr\'es\thanks{Sorbonne-Universit\'e, CNRS, Universit\'e de Paris, Laboratoire Jacques-Louis Lions (LJLL), F-75005 Paris, France,
        and  Institut Universitaire de France} \and Maxime Herda\thanks{Inria, Univ. Lille, CNRS, UMR 8524 – Laboratoire Paul Painlev\'e, F-59000 Lille, France}}
\newcommand\bX{\mathbf{X}}
\newcommand\cV{\mathcal{U}}
\newcommand\bx{\mathbf{x}}
\newcommand\by{\mathbf{y}}
\newcommand\bU{\mathbf{U}}
\newcommand\bV{\mathbf{V}}
\newcommand\bd{\mathbf{d}}
\newcommand\lla{\left\langle}
\newcommand\rra{\right\rangle}
\newcommand\RR{\mathbb{R}}
\newcommand\cD{\mathcal{D}}
\newcommand\ds{\displaystyle}
\newcommand\nrm{\|}
\newcommand\sP{\mathrm{P}}
\newcommand\eps{\varepsilon}
\newcommand\blue[1]{{\color{black}#1}}
\begin{document}
    
    \maketitle
    
    \begin{abstract}
        We propose an iterative  algorithm for the numerical computation of sums of squares of polynomials approximating given data at prescribed interpolation points. The method is based on the definition of a convex functional $G$ arising from the dualization of a quadratic regression over the Cholesky factors of the sum of squares decomposition. In order to justify the construction, the domain of $G$, the boundary of the domain and the behavior at infinity are analyzed in details.  When the data interpolate a positive univariate polynomial, we show that in the context of the Lukacs sum of squares representation, $G$ is coercive and strictly convex which yields a unique critical point and a corresponding decomposition in sum of squares.  For multivariate polynomials which admit a decomposition in sum of squares and  up to a small perturbation of size $\varepsilon$, $G^\varepsilon$ is always coercive  and so it minimum yields an approximate decomposition in sum of squares. Various unconstrained descent algorithms are proposed to minimize $G$. Numerical examples are provided, for  univariate and bivariate  polynomials.
    \end{abstract}
    
    \begin{keywords}
        Positive polynomials, sum of squares, convex analysis, positive interpolation, iterative methods.  
    \end{keywords}
    
    \begin{AMS}
        90C30, 65K05, 90C25
    \end{AMS}

    \section{Introduction}
    
     The  numerical and algorithmic motivation of the present paper comes from a   recent work  \cite{charli} 
    {where an  iterative algorithm for  positive interpolation 
    (meaning  that a sign condition on a given closed interval  $\mathbb I$ must be respected)
    was proposed for of univariate polynomials.
     A practical scenario which illustrates the interest of iterative positive interpolation
    is the following.
    Take a polynomial without knowing its sign  on $\mathbb I$.
    If  the iterative method    converges and recover $p$ at the limit (it can be checked at a finite number of points), 
    then $p$ is non negative on $\mathbb I$ (that is for an infinite number of points).
    In this case the algorithm provides an iterative {\it certificate of positivity}
   \cite{lasserre_2010_moments, lasserre_2015_introduction}. But if the iterations do not recover $p$ at the limit (or
    if one stops the algorithm after a finite number of iterations),
     then   $p$ is (or might be) non positive on $\mathbb I$.  In  this case of non convergence, the iterations provide nevertheless a non negative surrogate to $p$. We refer to the quoted work for an illustration
    of the interest of  non negative polynomial surrogates in the context of  Scientific Computing (SC).}
    However 
    two important restrictions in  the previous algorithm  \cite{charli} are that  the polynomials are univariate and the interpolation points, where the data of the polynomials are given, are sliding points (it allowed for strong convergence properties). It  brings severe constraints  for applications in SC.  
    In the present work, we relax these restrictions by constructing a  new iterative algorithm for positive interpolation.
    The algorithm   aims at computing  a sum of squares (SOS) decomposition from the sole knowledge
    of prescribed interpolation data at prescribed interpolation points. Also the method is much more general so it is formulated for multivariate polynomials as well and does not need tensorization, something that was impossible with the previous method.

    A  modern  reference in SC  for  control of the sign of polynomials at a \emph{finite number of   prescribed interpolation points}  is in  the works of C.-W. Shu  \cite{shu}, with application to the discretization of hyperbolic equations with high order methods.  The point of view developed in this article is to control the sign of polynomials on \emph{all points in a given compact (semi-algebraic) set $\mathbb K\subset \mathbb R^d$} which is much more demanding. Preliminary tests for the construction of such algorithms are in \cite{despres_2017_polynomials}, but the methods were inefficient in terms of the time of restitution. In a fully different direction, one must mention the theory of numerical approximation with  splines, see \cite{lee,veiga}: splines are widely used in scientific computing and computer aided design (CAD) but often needs tensorization in multi-dimension; this limitation is not encountered by our new methods because they can be implemented
    on any semi-algebraic set $\mathbb K$ in any dimension.
    
    In the community of numerical optimization \cite{lasserre_2010_moments} from which we borrow most of our notations, SOS algorithms based on SemiDefinite Programming (SDP) are extensively used. It had been noticed by Powers and W\"ormann \cite{powers_1998_algorithm} that finding an SOS decomposition is equivalent to SDP, that is optimization in the cone of non-negative quadratic forms. Then algorithms based on interior-point methods were developed to solve these problems \cite{nesterov,nesti1,nesti2}. However, these methods seem to be hardly directly applicable in SC because they are based more on algebraic properties and not on interpolation data which are of major importance  in numerical analysis and SC. This leads us to the development of the algorithm of the present paper, which is not based on SDP but rather on the iterative resolution of a non-convex quadratic problem over Cholesky factors of the SOS decomposition.  We solve the quadratic program using a dualization of the problem, which leads us to a nonlinear convex program.  Let us mention that a similar reformulation of general SDP was proposed by Burer and Monteiro \cite{burer_2003_nonlinear}. In our case however, we use the particular structure of the interpolation data of the 
    SOS  to obtain some useful coercivity properties on the dual function. Also, similar dualization ideas can be found in \cite{malik,henrion_2011_projection}, but unlike here they are formulated on the Gram matrix rather than on the Cholesky factors.
    Our  construction  will generate a functional with strong convexity properties for which standard descent algorithms are efficient, as shown in the numerical 
    section.
    
    Let $\sP[\bX]:=\sP[X_1, \dots X_d]$ be the set of real polynomials with $d$ variables.
    The subset of polynomials of total degree less than or equal to $n\geq 1$ is denoted by $\sP^n[\bX]$,
    with
    $       r_*=\mbox{dim } \sP^n[ \mathbf X]
$.
    Let $\mathbb{K}\subset \mathbb R$ be a closed  semi-algebraic set defined through  a finite number
    $j_*$  of polynomial inequalities
    \begin{equation} \label{eq:1}
        \mathbb{K} =\left\{
        \mathbf x\in \mathbb R^d \text{ such that } g_j(\mathbf x)\geq 0\mbox{ for }g_j\in \sP[\mathbf X] ,  1\leq j \leq j_*
        \right\}.
    \end{equation}
    Most standard cells (intervals in 1D, squares and triangles in 2D, \dots) in SC can be implemented as semi-algebraic sets, so it is not a restriction for further applications.
    The convex set of non-negative polynomials of maximal degree $n$ on $\mathbb K$  is 
    \begin{equation} \label{eq:2}
        \sP_{\mathbb{K},+}^n[\mathbf X]=\left\{
        p\in \sP^n[\mathbf X] \text{ such that } p(\mathbf x)\geq 0 \mbox{ for any }\mathbf x\in K
         \right\}.
            \end{equation}
    Famous examples of characterizations as SOS are the {Lukacs theorem} \cite{szego_1975_orthogonal} or {Putinar's Positvstellensatz} \cite{putinar_1993_positive}: a recent state of the art can be found in the books of Lasserre \cite{lasserre_2010_moments, lasserre_2015_introduction};
    some recent algorithmic issues in the context of optimal control can be found in \cite{henrion} and therein. In order to be constructive, we focus in this work on the following version
    \begin{equation}     \label{e:sos}
        p= \sum_{j=1}^{j_*}g_j \left(\sum_{i=1}^{i_*}q_{ij}^2\right) =
        \sum_{i=1}^{i_*} \left(    \sum_{j=1}^{j_*}g_jq_{ij}^2 \right) = \sum_{j=1}^{j_*}\sum_{i=1}^{i_*} g_j q_{ij}^2,
    \end{equation} 
    where the maximal number of squares is  equal to a predefined value $i_*\geq 1$ independent of $j$. 
      {In this work, the number of squares $i_*$ and the degree of the polynomials $q_{ij}$ are prescribed in function of $n$, 
    see below (\ref{eq:iegr}) for the prescription on  $i_*$ and (\ref{eq:degqij}) for the prescription the degree of the polynomials $q_{ij}$.
   It can be compared with the   Schm\"ugden's or  {Putinar's Positvstellensatz}  where
   the degree of the polynomials $q_{ij}$ can be exponentially large \cite{nie,lasserre_2010_moments}. With our notations, 
   it  is sufficient to embed $p$ in a set of polynomials of larger degree, that is to say to take $n\gg\mbox{deg}(p)$,
    to recover this case. }

    Next, the notion of unisolvence which comes from the {Finite Element Method} (FEM) is convenient to formalize properties of interpolation points. A unisolvent set of points 
    $\left(\mathbf x_r\right)_{1\leq r \leq r_*}$
    is  such that any polynomial   $p\in \sP^n[ \mathbf X]$ is uniquely determined by its values 
    $y_r=p(\mathbf x_r)$ for $1\leq r \leq r_*$.
      The number $i_*$ of  polynomials in the SOS (\ref{e:sos}) is  \emph{a priori} independent from the number of interpolation points. However in our context the function $G$ below is more naturally constructed assuming that 
    \begin{equation} \label{eq:iegr}
        i_*=r_*.
    \end{equation}
    That is why we  will assume (\ref{eq:iegr}) throughout this work, except at  early stages of  the construction.
     With these notations, one  formulates the notion of positive interpolation: it  is a recent  adaptation \cite{charli} to SC of  the notion  of a {\it certificate of positivity} for which the reader can find  information in \cite{lasserre_2010_moments, lasserre_2015_introduction}.
    A practical way to understand the model problem below is the following: from the knowledge of the values of $p$ 	at only a \emph{finite number of given interpolation points}, get a control of the sign of $p$ at \emph{infinite number of points} (the whole set $\mathbb K$).

    \begin{modproblem}[Iterative positive interpolation on $\mathbb K$]\label{p:main} 
        Let  $p\in \sP^n_{\mathbb{K},+}[ \mathbf X]$.
        Take a unisolvent set  $\left(\mathbf x_r\right)_{1\leq r \leq r_*}$, and consider  the interpolated values $y_r=p(\mathbf x_r)$. From $(\bx_r,y_r)_{1\leq r \leq r_*}$, compute  iteratively   polynomials $(q_{ij})_{ij}$ such that the SOS representation \eqref{e:sos} holds at the limit.
    \end{modproblem}

    The   methods and results studied in this work can be summarized as follows. 
     Consider the parametrization 
    \begin{equation} \label{eq:degqij}
    q_{ij}\in \sP^{n_j}[\mathbf X] \mbox{ with }
    n_j = \left\lfloor(n-{\rm deg} ( g_j))/2 \right\rfloor
    \end{equation}
    where $\lfloor\cdot\rfloor$ denotes the integer part of a real number. Consider  the canonical  basis made of monomials (but other basis can be taken as well, see Remark~\ref{rem:monom}), with the standard multi-index notation   $\alpha=(\alpha_1, \dots, \alpha_d)\in \mathbb N^d$,  $|\alpha|=\alpha_1+\dots +\alpha_d$ and $\bX^\alpha=X_1^{\alpha_1}\dots X_d^{\alpha_d}$.   The polynomials $q_{ij}$ write 
    $  q_{ij}(\bX)= \sum_{|\alpha|\leq n_j } c_\alpha^{ij} \mathbf X^\alpha$ and we store the coefficients in a vector of coefficients $c^{ij}=(c^{ij}_\alpha)_\alpha\in \RR^{r_{j}}$
    where  
    $
    r_{j}=
    \mathrm{dim}(\sP^{n_j}[\bX]) = \binom{d+n_j}{d}$.
    Gather the coefficients $c^{i1}$, $c^{i2}$, ..., $c^{ij_*}$  in a single column vector (called a Cholesky factor)
    $    \bU_i= \left(
    c^{i1}, c^{i2}, \dots, c^{ij_*}
    \right)^t \in \mathbb R^{r_*} $ where $r_* = \sum_{j=1}^{j_*} r_{j}$.
    {Define the Hankel matrices
    $ 
        D_{\alpha,\beta}^{n_j}(\bX) =  \bX^\alpha \bX^\beta$ for $  |\alpha|, |\beta| \leq n_j$. Define  
         the  polynomial valued block matrix $B(\bX)=B(\bX)^t \in \mathbb R^{r_*\times r_*}$ 
    \begin{equation}
        B(\bX) = \mathrm{diag}\left(
        g_1(\bX) D^{n_1}(\bX), 
        ,\dots, g_{j_*}(\bX) D^{n_{j_*}}(\bX)
        \right) .
        \label{e:defB}
    \end{equation}
    This matrix is a block diagonal localizing matrix \cite{lasserre_2010_moments}.}
    The first diagonal block  is square $r_1\times r_1$, 
    \dots 
    until the last block which is square $r_{j_*}\times r_{j_*}$:
    all other terms are zero. By construction, one has the identity
    \begin{equation} \label{e:defB2}
        \sum_{j=1}^{j_*}g_j(\bX) \sum_{i=1}^{i_*}q_{ij}^2(\bX) =
        \sum_{i=1}^{i_*}
        \left(  \sum_{j=1}^{j_*}g_j(\bX)q_{ij}^2(\bX) \right)
        = \sum_{i=1}^{i_*}\lla B(\bX) \mathbf U_i, \mathbf U_i \rra .
    \end{equation}
    Denote the evaluation  of $B(\mathbf X)$  at interpolation points as
    $    B_r = B(\bx_r)\in \mathbb R^{r_*\times r_*}$.    
    Define the function 
    $G :  \mathbb R^{r_*}  \to \blue{ \overline{ \mathbb R }=\mathbb{R}\cup\{+\infty\}}$ with domain  $
    \cD=
    \left\{
    \lambda\in \mathbb R^{r_*}\mbox{ such that } I+ \sum_{r=1}^{r_*} \lambda_r B_r \blue{\succ}0
    \right\}$ as follows. 
    For $\lambda\in \cD$ then
    \begin{equation} \label{eq:gigi}
        G(\lambda)= \mbox{tr}\left[ \left(I+\sum_{r=1}^{r_*} \lambda_r B_r \right)^{-1}\right] + \sum_{r=1}^{r_*}  y_r \lambda_r,
    \end{equation}
    otherwise
    $G(\lambda)=+\infty$. In the previous formula, $\mbox{tr}(\cdot)$ denotes the trace. 
    Our main results are the following.

    \begin{theorem} \label{t:main}     The function $G$ has the following properties:
        \begin{enumerate}
            \item[1.] 
            It  is a proper closed convex function. It is $C^\infty$ on its non-empty open convex domain $\mathcal D$,
            tends to infinity at $\partial \mathcal D$ and is  infinite  everywhere else by definition. 
            
            \item[2.]  
            Each $\lambda\in\cD$    defines  {computable}  polynomials $(q_{ij}[\lambda])_{1\leq i\leq r_*,1\leq j\leq j_*}$ such that 
            \begin{equation} \label{eq:gragra}
                \dfrac{\partial G}{\partial{\lambda_r}} (\lambda)  = y_r - \sum_{j=1}^{j_*}g_j(\bx_r) \sum_{i=1}^{r_*}q_{ij}^2[\lambda](\bx_r), \quad 1\leq r\leq r_*.
            \end{equation}
            If $\lambda_*\in  \mathcal D$ is a critical point of $G$, that is $\nabla G(\lambda_*)=0$, then the family 
            $(q_{ij}[\lambda_*])_{ij}$  is solution to (\ref{e:sos}), that is a SOS.
            
        \end{enumerate}
    \end{theorem}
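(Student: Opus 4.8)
\emph{Part 1 (domain, regularity, convexity).} Write $M(\lambda) := I + \sum_{r=1}^{r_*}\lambda_r B_r$, so that $\cD = \{\lambda : M(\lambda)\succ 0\}$. The plan is first to note that $\lambda\mapsto M(\lambda)$ is affine and that the set of symmetric positive definite matrices is open and convex, whence $\cD$ is open and convex, and non-empty since $M(0)=I\succ 0$. On $\cD$ the matrix $M(\lambda)$ is invertible and $M\mapsto M^{-1}$ is real-analytic, so $G$ is $C^\infty$ on $\cD$; since $\operatorname{tr}(M(\lambda)^{-1})>0$ there and $G\equiv+\infty$ off $\cD$, $G$ is proper. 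For convexity I would use the classical convexity of $X\mapsto\operatorname{tr}(X^{-1})$ on symmetric positive definite matrices: along any segment $t\mapsto M(\lambda)+tH$ lying in $\cD$, the scalar map $t\mapsto\operatorname{tr}((M(\lambda)+tH)^{-1})$ has second derivative $2\operatorname{tr}(N^{-1}HN^{-1}HN^{-1})=2\operatorname{tr}(N^{-1}K^2)$ with $N=M(\lambda)+tH$ and $K=N^{-1/2}HN^{-1/2}$ symmetric, which is $\ge 0$ as the trace of a product of two positive semidefinite matrices. Hence $G$ restricted to $\cD$ is convex, and being $+\infty$ outside the convex set $\cD$, it is convex on $\RR^{r_*}$.

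\emph{Part 1 (boundary behaviour and closedness).} Next I would show $G(\lambda)\to+\infty$ as $\lambda\to\lambda_0\in\partial\cD$. Indeed $M(\lambda)\to M(\lambda_0)$, which is positive semidefinite (a limit of positive definite matrices) but not positive definite (since $\lambda_0\notin\cD$), so its smallest eigenvalue vanishes; by continuity of eigenvalues the smallest eigenvalue $\mu_{\min}(M(\lambda))$ tends to $0^+$, while the affine term $\sum_r y_r\lambda_r$ stays bounded and $\operatorname{tr}(M(\lambda)^{-1})\ge 1/\mu_{\min}(M(\lambda))\to+\infty$. This is exactly the ``tends to infinity at $\partial\cD$'' statement; it also gives lower semicontinuity (the only points needing checking are those of $\partial\cD$, where both the value and the liminf are $+\infty$; off $\overline\cD$ the function is locally $+\infty$, and on $\cD$ it is continuous), so $G$ is closed.

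\emph{Part 2 (gradient formula).} For $\lambda\in\cD$, let $\bU_i[\lambda]\in\RR^{r_*}$ be the $i$-th column of $M(\lambda)^{-1}$ (more generally, of any $W$ with $WW^t=M(\lambda)^{-2}$, e.g. a Cholesky factor of $M(\lambda)^{-2}$), split $\bU_i[\lambda]$ into blocks $c^{i1}[\lambda],\dots,c^{ij_*}[\lambda]$ of sizes $r_1,\dots,r_{j_*}$ as in the parametrization, and set $q_{ij}[\lambda]:=\sum_{|\alpha|\le n_j}c_\alpha^{ij}[\lambda]\,\bX^\alpha$; these need only one matrix inversion and are thus computable. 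The gradient follows from matrix calculus: $\partial_{\lambda_r}M(\lambda)^{-1}=-M(\lambda)^{-1}B_rM(\lambda)^{-1}$, so by cyclicity of the trace $\partial_{\lambda_r}\operatorname{tr}(M(\lambda)^{-1})=-\operatorname{tr}(M(\lambda)^{-2}B_r)$ and $\partial_{\lambda_r}G(\lambda)=y_r-\operatorname{tr}(M(\lambda)^{-2}B_r)$. Since $M(\lambda)^{-1}$ is symmetric, $\sum_i\bU_i[\lambda]\bU_i[\lambda]^t=M(\lambda)^{-2}$, hence $\operatorname{tr}(M(\lambda)^{-2}B_r)=\sum_{i=1}^{r_*}\langle B_r\bU_i[\lambda],\bU_i[\lambda]\rangle$; evaluating identity \eqref{e:defB2} at $\bX=\bx_r$ (so $B(\bx_r)=B_r$) rewrites this as $\sum_{j=1}^{j_*}g_j(\bx_r)\sum_{i=1}^{r_*}q_{ij}^2[\lambda](\bx_r)$, which is \eqref{eq:gragra}.

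\emph{Part 2 (critical points).} Finally, if $\nabla G(\lambda_*)=0$ for some $\lambda_*\in\cD$, then \eqref{eq:gragra} gives $\sum_{j=1}^{j_*}g_j(\bx_r)\sum_{i=1}^{r_*}q_{ij}^2[\lambda_*](\bx_r)=y_r=p(\bx_r)$ for all $r$. By the degree prescription \eqref{eq:degqij} one has $\deg(g_jq_{ij}^2[\lambda_*])\le\deg g_j+2n_j\le n$, so both $p$ and $\sum_jg_j\sum_iq_{ij}^2[\lambda_*]$ lie in $\sP^n[\bX]$ and agree at the $r_*$ unisolvent points $(\bx_r)_{1\le r\le r_*}$; unisolvence forces them to coincide as polynomials, which is precisely the SOS identity \eqref{e:sos} for $(q_{ij}[\lambda_*])_{ij}$. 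The main obstacle I anticipate is the boundary blow-up/closedness step, i.e. controlling $\operatorname{tr}(M(\lambda)^{-1})$ as $M(\lambda)$ degenerates, together with keeping the column/block bookkeeping relating $\bU_i[\lambda]$ and $(q_{ij}[\lambda])_j$ consistent with \eqref{e:defB2}; the convexity of $X\mapsto\operatorname{tr}(X^{-1})$ and the gradient computation are standard once the setup is in place.
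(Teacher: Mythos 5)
Your proposal is correct and follows essentially the same route as the paper: openness/convexity of $\mathcal D$ from affinity of $\lambda\mapsto M(\lambda)$, smoothness from analyticity of matrix inversion, blow-up of $\operatorname{tr}(M(\lambda)^{-1})\geq 1/\mu_{\min}(M(\lambda))$ at $\partial\mathcal D$ to get closedness, the gradient via $\partial_{\lambda_r}M(\lambda)^{-1}=-M(\lambda)^{-1}B_rM(\lambda)^{-1}$, and identification with a SOS through \eqref{e:defB2} and unisolvence. The only cosmetic difference is that you invoke the classical convexity of $X\mapsto\operatorname{tr}(X^{-1})$ (via the second derivative $2\operatorname{tr}(N^{-1}K^2)\geq 0$), whereas the paper computes the Hessian quadratic form $\langle\nabla^2 G(\lambda)\mu,\mu\rangle = 2\sum_i\langle A_i,M(\lambda)^{-1}A_i\rangle$ directly in Lemma~\ref{p:derivatives}; the two are equivalent.
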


    \begin{theorem}[Existence of critical points in $\cD$] \label{th:th2}
        It  is proved in  two cases.
        \begin{enumerate}
            \item[1.] Take  $d> 1$, $\mathbb K$ a semi-algebraic set and   $p\in  \sP_{\mathbb{K},+}^n[\mathbf X]$. Assume that a technical condition on the linear independence
            of the matrices $B_r$ is satisfied. Then, up to an infinitesimally small perturbation
            (the perturbed polynomial $p^\varepsilon$ has 
            the interpolation data  $(y_r^\varepsilon)_{1\leq r \leq r_*}$), the function $G^\varepsilon$ is strictly convex, coercive and  admits a unique critical point in $\cD$.
            \item[2.] Take $d=1$, $\mathbb{K}$  a segment and $p>0$   on  $\mathbb{K}$. 
            Then  the technical condition the linear independence
            of the matrices $B_r$ is satisfied. Moreover  $G$ is strictly convex, coercive and  admits a unique critical point in $\cD$.
        \end{enumerate}
    \end{theorem}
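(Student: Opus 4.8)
The plan is to prove, in turn, strict convexity, coercivity, and then to deduce existence and uniqueness of a critical point; coercivity is the only step where the two cases genuinely differ. For strict convexity, write $G(\lambda)=\varphi\big(I+M(\lambda)\big)+\langle y,\lambda\rangle$ with $M(\lambda)=\sum_r\lambda_r B_r$ linear and $\varphi(X)=\mathrm{tr}(X^{-1})$. A short computation gives $D^2\varphi(X)[H,H]=2\,\mathrm{tr}(X^{-1}HX^{-1}HX^{-1})=2\,\|X^{-1/2}HX^{-1}\|_F^2>0$ for every symmetric $H\ne 0$ when $X\succ 0$; hence $\varphi$ is strictly convex on the positive definite cone and $G$ is strictly convex on $\mathcal D$ as soon as the affine map $\lambda\mapsto I+M(\lambda)$ is injective, i.e.\ as soon as $B_1,\dots,B_{r_*}$ are linearly independent. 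In case~1 this is the assumed technical condition; in case~2 I would prove it directly: if $\sum_r c_r B_r=0$, pairing with an arbitrary $\bU\in\RR^{r_*}$ yields $\sum_r c_r\big(\sum_j g_j(\bx_r)q_j^2(\bx_r)\big)=0$ for all $q_j\in\sP^{n_j}[\bX]$, and because $\mathbb K$ is a segment, the linear span of the polynomials $\sum_j g_j q_j^2$ with these prescribed degrees is all of $\sP^n[\bX]$ (a consequence of the Lukacs representation); thus $\sum_r c_r h(\bx_r)=0$ for every $h\in\sP^n[\bX]$ and unisolvence forces $c=0$.

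For coercivity I would analyse the recession function. Since $\mathcal D$ is open, convex, contains $0$, and is the preimage of the open PSD cone under $\lambda\mapsto I+M(\lambda)$, its recession cone is $\mathcal R=\{\omega: M(\omega)\succeq 0\}$, which is pointed because $M$ is injective. For $\omega\in\mathcal R$, writing the eigenvalues of $M(\omega)$ as $\mu_k\ge 0$, one gets $\mathrm{tr}\big((I+tM(\omega))^{-1}\big)=\sum_k(1+t\mu_k)^{-1}\to\dim\ker M(\omega)$ as $t\to\infty$, so (taking $\lambda_0=0$) the recession function of $G$ is $G0^+(\omega)=\langle y,\omega\rangle$ for $\omega\in\mathcal R$ and $+\infty$ otherwise. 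By the standard criterion, $G$ (resp.\ $G^\varepsilon$) is coercive iff $\langle y,\omega\rangle>0$ (resp.\ $\langle y^\varepsilon,\omega\rangle>0$) for all $\omega\in\mathcal R\setminus\{0\}$, i.e.\ iff the data vector lies in $\mathrm{int}\,\mathcal R^*$, which is nonempty by pointedness. Finally, the block-diagonal form of $B$ gives $\omega\in\mathcal R\iff\sum_r\omega_r g_j(\bx_r)s(\bx_r)^2\ge 0$ for all $j$ and all $s\in\sP^{n_j}[\bX]$, equivalently $\sum_r\omega_r h(\bx_r)\ge 0$ for every $h$ in the truncated quadratic module $M_n(g)=\{\sum_{i,j}g_j q_{ij}^2\}$ (with $i_*=r_*$ squares, which is enough to represent any admissible SOS); hence $\mathcal R^*=\overline{\Phi(M_n(g))}$, where $\Phi:\sP^n[\bX]\to\RR^{r_*}$, $h\mapsto(h(\bx_r))_r$, is bijective by unisolvence.

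It then remains to place the data vector in $\mathrm{int}\,\mathcal R^*$. In case~2 the Lukacs theorem gives $M_n(g)=\sP^n_{\mathbb K,+}[\bX]$, a closed full-dimensional cone, so $\mathrm{int}\,\mathcal R^*=\Phi(\{h:h>0\text{ on }\mathbb K\})$; since $p>0$ on $\mathbb K$, the vector $y=\Phi(p)$ lies in this interior and $G$ is coercive. In case~1, the hypothesis that $p$ admits the decomposition \eqref{e:sos} gives $y=\Phi(p)\in\Phi(M_n(g))\subseteq\mathcal R^*$; picking any $z_0=\Phi(h_0)\in\mathrm{int}\,\mathcal R^*$ (so that $h_0>0$ on $\mathbb K$ and admits an SOS decomposition) and setting $y^\varepsilon:=(1-\varepsilon)y+\varepsilon z_0$, which corresponds to the perturbed polynomial $p^\varepsilon:=(1-\varepsilon)p+\varepsilon h_0\to p$, convexity places $y^\varepsilon$ in $\mathrm{int}\,\mathcal R^*$ for all $\varepsilon\in(0,1]$, so $G^\varepsilon$ is coercive.

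In both cases $G$ (resp.\ $G^\varepsilon$) is thus proper, closed, convex, strictly convex and coercive, hence has a unique minimizer; by Theorem~\ref{t:main} it tends to $+\infty$ at $\partial\mathcal D$, so the minimizer lies in the open set $\mathcal D$ where $G$ is $C^\infty$, and is therefore the unique critical point, for which Theorem~\ref{t:main} supplies the SOS decomposition (exact in case~2, of $p^\varepsilon$ in case~1). I expect the crux to be the identification of $\mathcal R^*$ combined with the use of the Lukacs theorem in case~2 (the only ingredient specific to the univariate segment): matching the degree bookkeeping of the Lukacs decomposition to the prescribed exponents $n_j$ and to the convention $i_*=r_*$ requires some care, and once that is in place, case~1 reduces to exhibiting an honest SOS certificate for an arbitrarily small perturbation of $p$ rather than mere nonnegativity.
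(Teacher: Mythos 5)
Your proposal is correct and reaches the same conclusion as the paper, but the route differs in three of the steps in ways worth noting. For \emph{strict convexity} your argument is essentially the paper's (Proposition~\ref{p:strictconvex}): injectivity of $\lambda\mapsto I+\sum_r\lambda_r B_r$ combined with the strict convexity of $X\mapsto\mathrm{tr}(X^{-1})$ on the positive-definite cone. For the \emph{linear independence of the $B_r$} in the univariate case, you invoke the Luk\'acs representation to show that the span of the truncated quadratic module is all of $\sP^n$, whereas the paper's Lemma~\ref{lemma:ind} reads off linear independence directly from the Hankel block structure \eqref{e:hankel} and the invertibility of a Vandermonde matrix. Both are valid; the paper's route is more elementary and avoids the slight circularity of using Luk\'acs twice (you also need it later for coercivity).

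For \emph{coercivity} the approaches diverge most. You use the recession-function criterion of convex analysis: you compute $G0^+(\omega)=\langle y,\omega\rangle$ on the asymptotic cone $\mathcal R=C_\infty$ (and $+\infty$ off it), identify $\mathcal R^*$ with the image under evaluation of the truncated quadratic module via the Generalized Farkas correspondence, and reduce coercivity to $y\in\mathrm{int}\,\mathcal R^*$, using pointedness of $\mathcal R$ (a consequence of linear independence of the $B_r$) to guarantee $\mathrm{int}\,\mathcal R^*\ne\emptyset$. The paper instead argues by a direct contradiction/compactness argument (extracting a sub-direction $\mathbf d_*\in C_\infty$ along which $G$ stays bounded, then contradicting the lower bound from Lemma~\ref{l:specialp}). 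Your version is conceptually cleaner and makes the ``iff'' nature of the criterion explicit; the paper's is more self-contained. Finally, for the \emph{perturbation in case~1} you take $y^\varepsilon=(1-\varepsilon)y+\varepsilon z_0$ with $z_0$ an arbitrary interior point of $\mathcal R^*$, which is conceptually the right move, whereas the paper uses the concrete choice $p^\varepsilon=p+\varepsilon p_B$ with $p_B=\mathrm{tr}\,B(\bX)$ (Definition~\ref{d:specialPoly}), and proves via Lemma~\ref{l:specialp} that this particular $p_B$ gives a quantitative margin $\varepsilon c_*\|\lambda\|$. The paper's explicit $p_B$ is preferable algorithmically (it is computable from the $g_j$ without solving any auxiliary optimization), but your abstract choice is logically sufficient. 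One small point: as with the paper's own Proposition~\ref{p:coercive}, your coercivity argument in case~1 really needs $p$ to admit the decomposition \eqref{e:sos} (so that $y\in\mathcal R^*$), a hypothesis not visible in the theorem statement; you noticed this correctly and it matches the hypothesis used in the paper's proof.
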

    
    \begin{corollary}[Solution to Problem \ref{p:main}] \label{cor:cori1}
        Under the hypothesis of Theorem \ref{th:th2}, the minimum of $G$ (or $G^\varepsilon$) in $\cD$ yields a  SOS decomposition of  $p$ (or $p^\varepsilon$). It can be computed by standard descent algorithms.
    \end{corollary}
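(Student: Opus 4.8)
The plan is to combine the two main theorems, so the argument is short; throughout, write $\widetilde{G}$ for $G$ in case~2 of Theorem~\ref{th:th2} and for $G^\varepsilon$ in case~1, write $\widetilde{p}$ for $p$, resp.\ $p^\varepsilon$, and $\widetilde{y}_r$ for $y_r$, resp.\ $y_r^\varepsilon$. First I would record that, under the hypotheses of Theorem~\ref{th:th2}, $\widetilde{G}$ is proper, closed, strictly convex and coercive on $\RR^{r_*}$, with a unique critical point $\lambda_*\in\cD$. Coercivity together with lower semicontinuity (closedness) shows that every sublevel set $\{\widetilde{G}\le c\}$ with $c<\infty$ is bounded and closed, hence compact; moreover it is contained in the open set $\cD$ because $\widetilde{G}\equiv+\infty$ on $\RR^{r_*}\setminus\cD$. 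Since $\cD$ is non-empty, a minimizing sequence started from any $\lambda^0\in\cD$ stays in the nonempty compact set $\{\widetilde{G}\le\widetilde{G}(\lambda^0)\}\subset\cD$, so $\inf_{\RR^{r_*}}\widetilde{G}$ is finite and attained at some point of $\cD$; by item~1 of Theorem~\ref{t:main} (blow-up of $\widetilde{G}$ at $\partial\cD$) the minimizer indeed lies in the interior $\cD$, where $\widetilde{G}$ is $C^\infty$, so it is a critical point, and by strict convexity it coincides with the unique critical point $\lambda_*$ of Theorem~\ref{th:th2}.

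Next I would feed $\lambda_*$ into item~2 of Theorem~\ref{t:main}: it produces computable polynomials $(q_{ij}[\lambda_*])_{1\le i\le r_*,\,1\le j\le j_*}$ with
\begin{equation*}
0=\frac{\partial \widetilde{G}}{\partial\lambda_r}(\lambda_*)=\widetilde{y}_r-\sum_{j=1}^{j_*} g_j(\bx_r)\sum_{i=1}^{r_*} q_{ij}^2[\lambda_*](\bx_r),\qquad 1\le r\le r_*.
\end{equation*}
By the degree prescription \eqref{eq:degqij} one has $\mathrm{deg}(g_jq_{ij}^2)\le\mathrm{deg}(g_j)+2n_j\le n$, so $P:=\sum_{j}g_j\sum_i q_{ij}^2[\lambda_*]$ belongs to $\sP^n[\bX]$, as does $\widetilde{p}$. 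The displayed identity says $P(\bx_r)=\widetilde{y}_r=\widetilde{p}(\bx_r)$ for every $r$, and since $(\bx_r)_{1\le r\le r_*}$ is unisolvent for $\sP^n[\bX]$ we conclude $P=\widetilde{p}$, which is exactly the SOS representation \eqref{e:sos} of $\widetilde{p}$. This establishes the first assertion of the corollary.

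For the computational claim I would observe that the combination of coercivity and smoothness makes $\widetilde{G}$ its own barrier for the constraint $I+\sum_r\lambda_r B_r\succ0$: for any starting point $\lambda^0\in\cD$ the sublevel set $S^0=\{\widetilde{G}\le\widetilde{G}(\lambda^0)\}$ is a compact subset of $\cD$, hence at positive distance from $\partial\cD$. Therefore any monotone descent scheme — gradient descent, nonlinear conjugate gradient, or a Newton/quasi-Newton method — equipped with a standard backtracking (Armijo) line search keeps the iterates in $S^0\subset\cD$, where $\widetilde{G}$ is smooth, and the classical global convergence theorems for such methods applied to a $C^\infty$, coercive, strictly convex objective yield $\lambda^k\to\lambda_*$; consequently $q_{ij}[\lambda^k]\to q_{ij}[\lambda_*]$ and the limit is an SOS of $\widetilde{p}$.

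Since the substantive work is already done in Theorems~\ref{t:main} and~\ref{th:th2}, the only points needing care are the two small arguments above: that the minimizer does not escape to $\partial\cD$ (handled by the boundary blow-up of Theorem~\ref{t:main}), and that equality at the $r_*$ unisolvent nodes upgrades to a genuine polynomial identity (handled by the degree count in \eqref{eq:degqij} together with unisolvence). The computational statement is the softest part, and I would only sketch it here, deferring concrete algorithms and their efficiency to the numerical section.
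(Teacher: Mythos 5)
Your proof is correct and follows the paper's intended route: coercivity, closedness, and strict convexity of $G$ (resp.\ $G^\varepsilon$) yield a unique minimizer in $\cD$ which is the unique critical point, and then item~2 of Theorem~\ref{t:main} (equivalently Proposition~\ref{prop:main}) converts that critical point into an SOS via unisolvence. The paper leaves this corollary without an explicit proof precisely because it is the immediate combination of Theorem~\ref{th:th2} and Proposition~\ref{prop:main}; your write-up fills in the same steps, just more explicitly.
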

    
    As stated in the introduction, a practical scenario which in our mind has interest for SC is the following.
    Take a polynomial without knowing its sign  on $\mathbb K$.
    If  the descent method    converges and recover $p$ at the limit, then $p$ is non negative on $\mathbb K$. If the descent does not recover $p$ at the limit, then  for monovariate polynomials, $p$ is non positive on $\mathbb K$. It shows that the descent method provides an iterative {\it certificate of positivity}. In  case of non convergence, the iterations provide nevertheless a non negative surrogate to $p$. We refer to \cite{charli} for an illustration
    of the interest of  non negative polynomial surrogates.
    
    The outline of this paper is as follows. In Section~\ref{s:dual}, we propose a dual interpretation of Problem \ref{p:main}. This leads us to the introduction of the function $G$ and its domain.
     Then, in Section~\ref{s:properties}, we discuss necessary and sufficient conditions characterizing asymptotic properties and strict convexity of $G$. In Section~\ref{s:univariate}, we show that for univariate positive polynomials on a segment, the former conditions are satisfied yielding strict convexity and coercivity of the associated function $G$. Besides, we provide a more precise description the structure of the domain. In Section \ref{s:methods}, we present the specific descent and Newton type methods we use to compute the critical points of $G$. In Section~\ref{s:numerics} we provide numerical illustrations of the efficiency of our new approach  for computing SOS decomposition of polynomials in one variable on segment and two variables on triangle. Finally, we provide in Appendix~\ref{s:asympcone} some additional theoretical results concerning the links between the asymptotic cone of the set $\mathcal{D}$ and the Lagrange polynomials in the case of univariate polynomials.
    \newline
    {\it Acknowledgements.}
    Both authors are greatly indebted to  Jean-Bernard Lasserre and Didier Henrion for their kind explanations on the theory and state of the art of semidefinite programming and sum of squares and would like to thank them for their invitation at LAAS and for their hospitality. \blue{The authors would also like to thank the anonymous referees for their suggestions and comments which helped to improve the quality of this paper.}

    \section{Construction of $G$ (Proof of Theorem \ref{t:main})}\label{s:dual}
    
    The construction of $G$, leading to \eqref{eq:gigi}, is done by  recasting the   model problem \ref{p:main} as  the convex dual of a Quadratically Constrained Quadratic Program (QCQP) (see\cite{boyd_2004_convex}).  In order to have a more general discussion, we relax the condition (\ref{eq:iegr})  in this part and in the next Section \ref{sec:ie}. It means that 
    $$
    i_*\neq r_*
    $$
    is possible as well.
     The condition (\ref{eq:iegr}) is reintroduced end of Section \ref{sec:ie}.
    We begin with some remarks on the objects introduced in the first section.

    \begin{remark}\label{rem:monom}
        In the numerical experiments of Section \ref{s:numerics}, we use other polynomials  than the monomials  in order to optimize the robustness and accuracy  of the algorithms.
        It only changes the definition  of the matrix $B(\mathbf{X})$ in \eqref{e:defB} and thus of $B_r = B(\mathbf{x}_r)$ but every result of this paper still hold. More generally, one could even generalize Problem~\ref{p:main} and replace the constraint of equality at interpolated values by constraints of the type $y_r = L_r(p)$ where the family $\{L_r:P^{n}[\mathbf{X}]\rightarrow\mathbb{R},\ r = 1,\dots,r_*\}$ is any basis of the dual space of $P^{n}[\mathbf{X}]$. In the context of SC  more precisely for the numerical resolution of partial differential equations, one deals with data points in finite difference discretizations. However, if one is considering a finite volume discretization, one would rather work with mean values on some mesh cells. This variant is easily manageable with our method by choosing the adequate linear forms $L_r$ and modifying the matrices $B_r$ accordingly.
    \end{remark}

%

    \begin{remark} \label{anc:lemma2.10}
        An interesting consequence of the  Caratheodory Theorem (\cite[Theorem III.1.3.6 page 98]{hiriart_1993_convex})
        is that if     a formula like (\ref{e:sos}) holds for $i_*>r_*$, then a similar one holds
        also for $i_*=r_*$ (but for different polynomials $q_{ij}$). 
        Indeed the set
        $
        \mathcal W= \sum_{j=1}^{j_*} g_j(\mathbf X) \sP^{n_j}[\mathbf X]^2
        $
        is a closed convex cone embedded in $ p\in \sP^n[\mathbf X]$.
        Therefore any convex combination of $i_*>r_*$ elements of $\mathcal W$ can be expressed as a convex
        combination
        of only  $r_*=\mbox{dim }\sP^n[\mathbf X]$ elements of  $\mathcal W$ (the coefficients of the convex combination
        can be set to 1 after proper rescaling of the new $q_{ij}$).
    \end{remark}

    \subsection{Lagrangian duality (Theorem \ref{t:main} item 1)} \label{sec:ie}

    In any dimension, the notation   $\lla\cdot,\cdot\rra$ will denote the Euclidean dot product and   $\|\cdot\|$ will denote  the associated norm. We define  the algebraic manifold 
    \begin{equation} \label{eq:aze2}
        \cV=\{\bU=(\bU_1, \dots, \bU_{i_*})\in
        \left(  \mathbb{R}^{r_*} \right)^{ i_*}\text{ such that } \sum_{i=1}^{i_*}\lla B_r \mathbf U_i, \mathbf U_i\rra
        =y_r
        \text{ for all }1\leq r\leq r_*\} .
    \end{equation}
    The vectors $\mathbf U_i$ are called the Cholesky factors \cite{nesti1,nesti2} of the decomposition.
    With the unisolvence assumption, finding a SOS \eqref{e:sos} amounts to finding one element 
    $\mathbf U\in \cV$.
    In order to find a $\mathbf U\in \cV$ in a constructive manner, our  strategy is to
    start  at a given $\mathbf V$ (probably outside $\cV$)
    and to project on $\cV$ in the quadratic norm. It writes as follows.
    
    \begin{problem} \label{prob:aze}
        Take  $\bV=(\bV_1, \dots, \bV_{i_*})\in
        \left(  \mathbb{R}^{r_*} \right)^{ i_*}$. Calculate $
        \mathbf U=   \underset{\bU\in\cV}{\mbox{argmin }   }
        \frac{1}{2}\sum_{i=1}^{i_*}\|\bU_i - \bV_i\|^2 $. 
    \end{problem}
    
    The vectors $\bV=(\bV_i)_i$ may be thought of  as a good initial guesses for the 
    $\bU=(\bU_i)_i$. The optimal value of the  cost  does not matter. However Problem \ref{prob:aze} seems even harder to solve than the original problem we were concerned with.
    The  finding is that the Lagrangian dual problem   is endowed with good properties provided
    $\mathbf V$ is conveniently chosen. In this case, the new Problem \ref{prob:aze} provides a 
    way to determine an admissible $\bf U\in \cV$. 
    
    Still for any $\mathbf V$, introduce the Lagrangian which is the sum of the functional  and of the dualization of the constraint (\ref{eq:aze2}) with a Lagrange multiplier $\lambda\in \mathbb R^{r_*}$
    \[
    \mathcal L(\bU, \lambda)= \frac12\sum_{i=1}^{i_*}\left(\|\bU_i - \bV_i\|^2  + \sum_{r=1}^{r_*} \lambda_r\lla B_r \mathbf U_i, \mathbf U_i \rra\right) -\frac12 \lla\lambda,\by\rra  \qquad \mbox{where }
    \mathbf y =\left(
    y_r\right)_{1\leq r \leq r_*} .
    \]
     The first-order optimality constraints are  $\nabla_{ \mathbf U} \mathcal L=0$ and $\nabla_ \lambda \mathcal L=0$. 
     The first-order optimality constraint  $\nabla_{ \mathbf U} \mathcal L=0$  is 
     linear with respect to $\mathbf U$. Define  the symmetric matrix $M(\lambda) =M(\lambda)^t \in \mathbb R^{r_*\times r_*} $ 
    \begin{equation}\label{e:mlambda}
        M(\lambda) =I+\sum_{r= 1}^{r_*} \lambda_r B_r
    \end{equation}
    where $I$ is the identity matrix in $ \mathbb R^{r_*\times r_*}$.
    The  condition   $\nabla_{ \mathbf U} \mathcal L=0$ writes
    \begin{equation} \label{eq:vtou}
        M(\lambda)\mathbf U_i = \mathbf V_i \mbox{ for }1\leq i \leq i_*
        \Longleftrightarrow 
        M(\lambda)\mathbf U = \mathbf V.
    \end{equation}
    If the multiplier  $\lambda \in \mathbb R^{r_*}$ is such that 
    the matrix $M(\lambda)$ is invertible,  then the candidate solution $\mathbf U$  can be computed 
    explicitly in terms of $\lambda$ and $\mathbf V$ as the solution of the linear system (\ref{eq:vtou}).
    
    It is therefore natural to concentrate on a condition on $\lambda$ such that $M(\lambda)$ is invertible. In order to obtain convexity properties in the following we even restrict $\lambda$ to the set of positive definiteness of $M(\lambda)$. 
    To our knowledge, this at this stage that our analysis differs from the standard exposition of dual QCQP \cite{boyd_2004_convex,hiriart_1993_convex} and from other dualizations in the context of SOS \cite{burer_2003_nonlinear,malik,henrion_2011_projection}.

    \begin{definition}\label{d:D}
        The domain of positive definiteness of $M$ is 
        $      \mathcal D=\left\{
        \lambda\in \mathbb R^{r_*}\mid M(\lambda)\blue{\succ} 0 
        \right\} \subset \mathbb R^{r_*} $. 
        It is  an open set and it is non empty  since $0\in \cD$.
    \end{definition}
    
    For a Lagrange multiplier $\lambda\in\cD$, the inverse transformation of (\ref{eq:vtou}) 
    is
    $
    \bU(\lambda) = M(\lambda)^{-1}\bV$. 
    Then, one can evaluate the Lagrangian at $\bU(\lambda)$.
    An elementary computation yields
    $
    \mathcal L(\bU(\lambda), \lambda)= \frac12\sum_{i=1}^{i_*}\left(\|\bV_i\|^2   -\lla \bV_i, M(\lambda)^{-1}\bV_i\rra\right)  -\frac12\lla\lambda,\by\rra $. 
    This motivates the introduction of the dual objective function $G_\bV:\mathcal D \longrightarrow \mathbb R$ defined by
    \begin{equation} \label{e:GV}
        G_\bV(\lambda)=\sum_{i=1}^{i_*}\lla \mathbf \bV_i, M(\lambda)^{-1}\bV_i \rra  +
        \lla \lambda, \mathbf y \rra\,,
    \end{equation}
    and which one should think of as a function to be minimized.
    \begin{lemma}\label{p:derivatives}
        The function $G_\bV$  is smooth on $\cD$. The first and second derivatives are 
        \begin{equation} \label{e:derivatives}
            \ds\frac{\partial G_\bV}{\partial\lambda_r} (\lambda)= \ds y_r - \sum_{i=1}^{i_*}\left<\mathbf  U_i(\lambda),B_r \mathbf U_i(\lambda)\right> 
            \mbox{ and }	\ds\frac{\partial^2G_\bV}{\partial\lambda_r\partial\lambda_s} (\lambda) = \ds 2 \sum_{i=1}^{i_*} \left<  B_r  \mathbf  U_i(\lambda),M(\lambda)^{-1}  B_s \mathbf U_i(\lambda)\right> .
        \end{equation}
        In particular $G_\bV$ is convex on  $\mathcal D$.
    \end{lemma}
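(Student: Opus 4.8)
The plan is to compute the derivatives by direct differentiation of the explicit formula \eqref{e:GV}, using only the standard identity for the derivative of a matrix inverse, and then read off convexity from the structure of the Hessian. First I would recall that $\lambda\mapsto M(\lambda)$ is affine with $\partial M/\partial\lambda_r = B_r$, and that on the open set $\cD$ the map $\lambda\mapsto M(\lambda)^{-1}$ is smooth (indeed real-analytic) because matrix inversion is smooth on the open set of invertible matrices; composing with the affine map $M$ gives smoothness of $G_\bV$ on $\cD$. Differentiating $M(\lambda)M(\lambda)^{-1}=I$ yields $\partial_{\lambda_r}\!\left(M(\lambda)^{-1}\right) = -M(\lambda)^{-1} B_r M(\lambda)^{-1}$, which is the one computational fact the whole argument rests on.

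Next I would plug this into $\partial_{\lambda_r} G_\bV = \sum_i \partial_{\lambda_r}\lla \bV_i, M(\lambda)^{-1}\bV_i\rra + y_r$. Using $\bU_i(\lambda)=M(\lambda)^{-1}\bV_i$ and the symmetry of $M(\lambda)^{-1}$ and of $B_r$, the term becomes $-\sum_i \lla \bV_i, M(\lambda)^{-1} B_r M(\lambda)^{-1}\bV_i\rra = -\sum_i \lla \bU_i(\lambda), B_r \bU_i(\lambda)\rra$, which gives the first formula in \eqref{e:derivatives}. For the second derivative I would differentiate $-\sum_i \lla \bU_i, B_r \bU_i\rra$ in $\lambda_s$; since $\partial_{\lambda_s}\bU_i = -M(\lambda)^{-1}B_s \bU_i$ and $B_r$ is symmetric, the product rule produces two equal terms $2\sum_i \lla B_r\bU_i(\lambda), M(\lambda)^{-1} B_s \bU_i(\lambda)\rra$, which is exactly the stated Hessian. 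Here one should be mildly careful to keep track that the two contributions coincide — this uses symmetry of both $B_r$ and $M(\lambda)^{-1}$ — but it is routine.

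Finally, convexity: for any $\xi\in\mathbb R^{r_*}$ the quadratic form $\sum_{r,s}\xi_r\xi_s\,\partial^2_{\lambda_r\lambda_s}G_\bV = 2\sum_i \lla \big(\sum_r \xi_r B_r\big)\bU_i(\lambda),\, M(\lambda)^{-1}\big(\sum_s\xi_s B_s\big)\bU_i(\lambda)\rra$. Writing $w_i=\big(\sum_r\xi_r B_r\big)\bU_i(\lambda)$ this is $2\sum_i \lla w_i, M(\lambda)^{-1} w_i\rra \ge 0$ because $M(\lambda)\succ 0$ on $\cD$ implies $M(\lambda)^{-1}\succ 0$. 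Hence the Hessian is positive semidefinite everywhere on the convex open set $\cD$, so $G_\bV$ is convex there. I do not expect a genuine obstacle; the only point demanding any attention is the bookkeeping of symmetry and the factor $2$ in the second derivative, and the observation that positive definiteness of $M(\lambda)$ on $\cD$ is what upgrades the Hessian from "sum of things" to "positive semidefinite."
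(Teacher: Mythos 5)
Your proposal is correct and follows the same route as the paper: differentiate the explicit formula \eqref{e:GV} using $\partial_{\lambda_r}M(\lambda)^{-1}=-M(\lambda)^{-1}B_rM(\lambda)^{-1}$ together with the symmetry of $B_r$ and $M(\lambda)^{-1}$, then read off positive semidefiniteness of the Hessian from $M(\lambda)^{-1}\succ0$ on $\cD$. The paper states this in a single sentence while you spell out the bookkeeping (in particular the factor $2$ from the two equal product-rule terms), but the argument is identical.
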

    
    \begin{proof} 
        The proof stems from the identity 
        $\partial_{\lambda_r} M(\lambda)^{-1}=- M(\lambda)^{-1}B_r M(\lambda)^{-1} $ and the symmetry of the various  matrices involved. Convexity follows from the positivity of $M(\lambda)$ and the expression of second derivatives yielding
        $
        \lla \nabla^2G_\bV(\lambda) \mu, \mu \rra = 2 \sum_{i=1}^{i_*}
        \lla A_i(\mu,\lambda) , M(\lambda)^{-1} A_i(\mu,\lambda)  \rra \geq 0
        $
        where $ A_i(\mu,\lambda)=\sum_{r=1}^{r_*}\mu_r B_r \mathbf U_i(\lambda)$
        for $ 1\leq i \leq i_*$. 
    \end{proof}

    In order to address the behavior of $G_\bV$ near the boundary, we will make us of the following notion.
    
    \begin{definition}
        A convex function $f : \RR^{r_*}\mapsto \RR\cup\{+\infty\}$ is said to be closed  over its domain $\mathcal D_f=\left\{  \mathbf x\mid f(\mathbf x)<\infty  \right\}$ if and only if the level sets  $\left\{  \mathbf x\mid f(\mathbf x)\leq t  \right\}$ are closed for $t<+\infty$: see \cite{hiriart_1993_convex} or \cite[Appendix A.3.3.]{boyd_2004_convex}.
    \end{definition}
    This  property is extremely important in our approach because it yields a strong control of 
    the objective function at finite distance. 
    
    \begin{lemma}\label{p:closed}
        Assume the equality of dimensions  (\ref{eq:iegr}))\footnote{As a consequence, the notation $i_*$ will not be used anymore in the rest of the presentation, only $r_*$.}, that is $i_*=r_*$,     and that $\bV\in\RR^{r_*\times r_*}$ is an orthogonal matrix. Then one has the simpler
        expression 
        where $\mathrm{tr}(\cdot)$ denotes the trace of a square matrix
        \begin{equation} \label{eq:newgg}
            G(\lambda):=  G_\bV(\lambda)= \mathrm{tr} (M^{-1}(\lambda)) + \lla \lambda, \mathbf y \rra . 
        \end{equation}
        Moreover the extension of $G:=G_\bV$ with value $+\infty$ outside of $\cD$ is a closed convex function
        with open domain $\mathcal D$.
    \end{lemma}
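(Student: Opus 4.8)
The plan is to establish the three assertions in turn: the closed-form expression \eqref{eq:newgg}, the convexity of the extension together with the identification of its domain, and finally the closedness.

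First I would simplify \eqref{e:GV} under the hypothesis $i_*=r_*$ and $\bV$ orthogonal. Writing $\bV_1,\dots,\bV_{r_*}$ for the columns of $\bV$, one has
\[
\sum_{i=1}^{r_*}\lla \bV_i, M(\lambda)^{-1}\bV_i\rra
=\sum_{i=1}^{r_*}\mathrm{tr}\!\left(M(\lambda)^{-1}\bV_i\bV_i^t\right)
=\mathrm{tr}\!\left(M(\lambda)^{-1}\sum_{i=1}^{r_*}\bV_i\bV_i^t\right)
=\mathrm{tr}\!\left(M(\lambda)^{-1}\bV\bV^t\right)
=\mathrm{tr}\!\left(M(\lambda)^{-1}\right),
\]
using the cyclic invariance of the trace and $\bV\bV^t=I$. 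This gives \eqref{eq:newgg}, which moreover agrees with \eqref{eq:gigi} since $M(\lambda)=I+\sum_r\lambda_rB_r$.

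Next, for the domain: on $\cD$ the matrix $M(\lambda)$ is positive definite, hence invertible, so $\mathrm{tr}(M(\lambda)^{-1})$ is a finite real number and $G(\lambda)<\infty$; off $\cD$ one sets $G(\lambda)=+\infty$ by definition. Thus $\{\lambda:G(\lambda)<\infty\}=\cD$, which is open and non-empty by Definition~\ref{d:D}. For convexity, recall that $\cD$ is convex, being the preimage of the convex cone of positive definite matrices under the affine map $\lambda\mapsto M(\lambda)$, and that $G=G_\bV$ is convex on $\cD$ by Lemma~\ref{p:derivatives}; extending a convex function defined on a convex set by the value $+\infty$ everywhere else yields a convex function on all of $\RR^{r_*}$.

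Finally, closedness. Since $G$ is $C^\infty$, hence continuous, on the open set $\cD$, it suffices to verify lower semicontinuity at points $\lambda_0\notin\cD$, where $G(\lambda_0)=+\infty$; if $\lambda_0$ lies in the interior of the complement of $\cD$ this is trivial, so assume $\lambda_0\in\partial\cD$. Let $\lambda_n\to\lambda_0$; discarding finitely many terms we may assume $\lambda_n\in\cD$ for all $n$, which is the only case to treat. Then $M(\lambda_n)\to M(\lambda_0)$, and $M(\lambda_0)$ is positive semidefinite as a limit of positive definite matrices, while $\lambda_0\notin\cD$ forces $M(\lambda_0)$ to be singular; hence its smallest eigenvalue equals $0$. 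By continuity of the spectrum, the smallest eigenvalue $\mu_n$ of $M(\lambda_n)$ satisfies $\mu_n\to 0^+$, so $\mathrm{tr}(M(\lambda_n)^{-1})\geq \mu_n^{-1}\to+\infty$, whereas $\lla\lambda_n,\by\rra\to\lla\lambda_0,\by\rra$ stays bounded. Therefore $G(\lambda_n)\to+\infty=G(\lambda_0)$, which gives $\liminf_{\lambda\to\lambda_0}G(\lambda)\geq G(\lambda_0)$, i.e.\ lower semicontinuity; equivalently every sublevel set $\{\lambda:G(\lambda)\leq t\}$ with $t<+\infty$ is closed, so $G$ is a closed convex function. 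The same computation proves that $G(\lambda)\to+\infty$ as $\lambda\to\partial\cD$.

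The one step that requires genuine care is this boundary blow-up: one must correctly argue that $M(\lambda_0)$ is positive semidefinite and singular, and then invoke continuity of the eigenvalues to bound $\mathrm{tr}(M(\lambda_n)^{-1})$ from below by the reciprocal of the vanishing smallest eigenvalue; the remaining verifications are routine bookkeeping with convex sets and the trace.
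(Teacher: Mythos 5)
Your proof is correct and follows essentially the same route as the paper: you derive \eqref{eq:newgg} by expanding $\sum_i\lla\bV_i, M^{-1}\bV_i\rra = \mathrm{tr}(M^{-1}\bV\bV^t)$ and using $\bV\bV^t=I$ (the paper states this more tersely as a "direct consequence" of the number of orthogonal vectors equalling the dimension), and you establish closedness by showing that $\mathrm{tr}(M(\lambda_n)^{-1})$ blows up along sequences in $\cD$ converging to a boundary point, via the minimal eigenvalue tending to zero — exactly the argument given in the paper. Your version is more detailed (in particular the explicit reduction to sequences in $\cD$ and the identification of the extremal case as $\lambda_0\in\partial\cD$), but the underlying idea is identical.
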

    
    \begin{proof} 
        The formula is a direct consequence of (\ref{e:GV}), because the number $i_*$ of orthogonal vectors
        $\mathbf V_i$ is equal to the dimension $r_*$ of the space. Thanks to the continuity on $\cD$, the closedness of $G_\bV$ on $\RR^{r_*}$ amounts to showing that for any sequence $(\mu_k)_k$ in $\cD$ converging to a point of the boundary of the domain
        $
        \partial \mathcal D= \left\{ \lambda \in \overline{\mathcal D}\mid \mbox{det} \left(I+ \sum_{r= 1}^{r_*} \lambda_r B_r\right)=0 \right\} ,
        $ 
        then 
        one has $G_\bV(\mu_k)\to+\infty$ as $k\to+\infty$. In the light of the  representation formula 
        (\ref{eq:newgg})
        involving the trace of $M(\lambda)^{-1}$ it is  the case since the minimal eigenvalue of  $M(\mu_k)$ goes to $0$ as $k\to+\infty$.
        Clearly the function is independent of $\mathbf V$.
    \end{proof}

    { In order to have a better intuition of the structure of $G$,
 an illustration of its graph  is given on Figure  \ref{fig:2}.    Near the boundary of its domain, the function $G$ behaves by construction   like    a  rational barrier function \cite{nesterov,boyd_2004_convex}.
This barrier does not introduce any kind of approximation, and it   is an exact one.
This property is a strong  algorithmic asset of $G$ with respect to more standard logarithmic barrier methods.
The figure provides three different 
illustrations  of a closed convex function which is infinite outside of its domain.
Actually we will show in the sections below that $G$ is linear at infinity (in the direction of the asymptotic cone).
The whole point will be to understand under which conditions $G$ is coercive, which corresponds to
 the rightmost plot on Figure~\ref{fig:2}.
 It will  prove the existence of a multiplier in $\mathcal D$, without explicitly requiring to use the methods of Lagrangian duality.

  \begin{figure}[h]
    \begin{center}
      \begin{tabular}{ccc}
	\scalebox{0.28}{\input{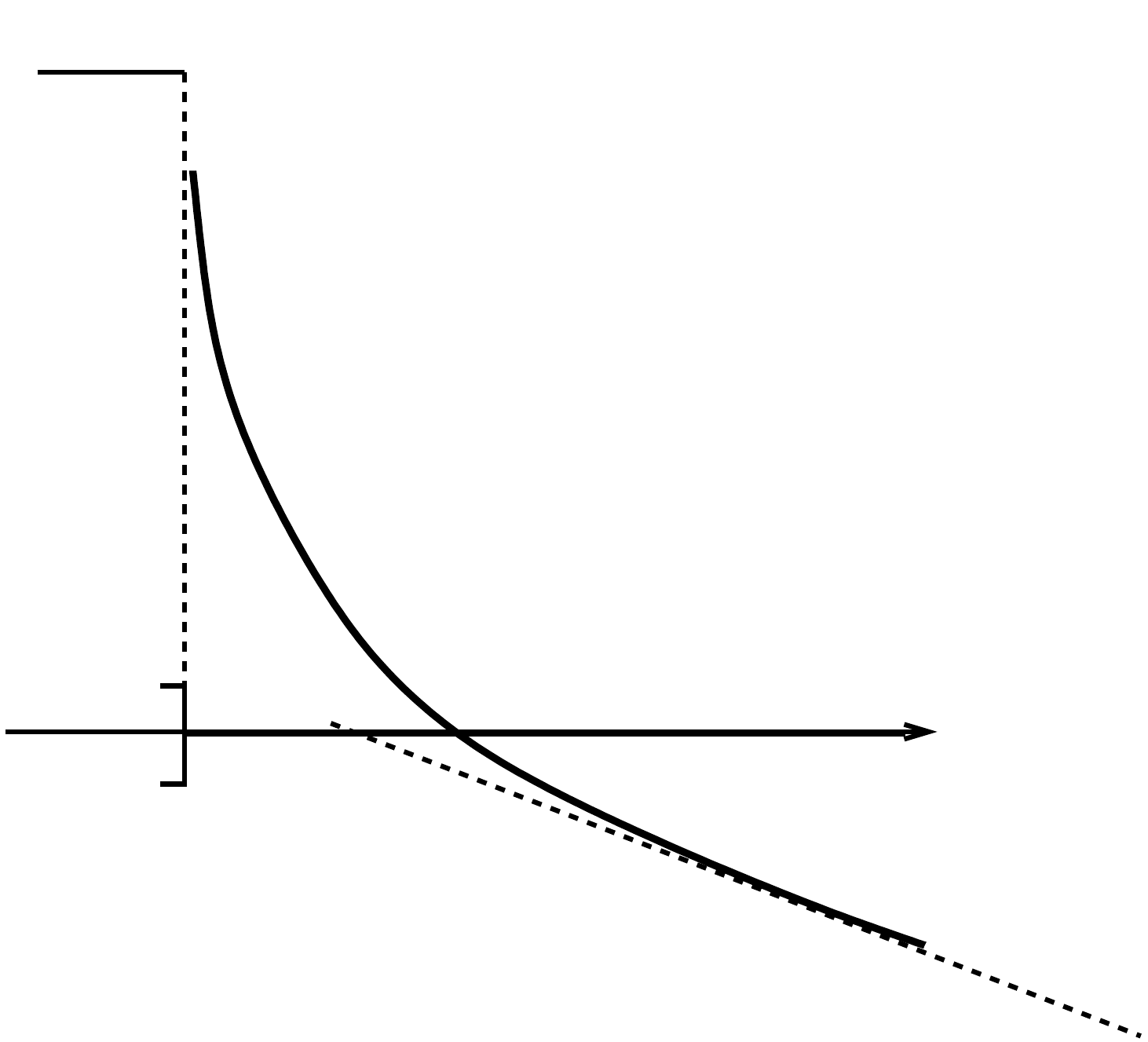_t}} &
	\scalebox{0.28}{\input{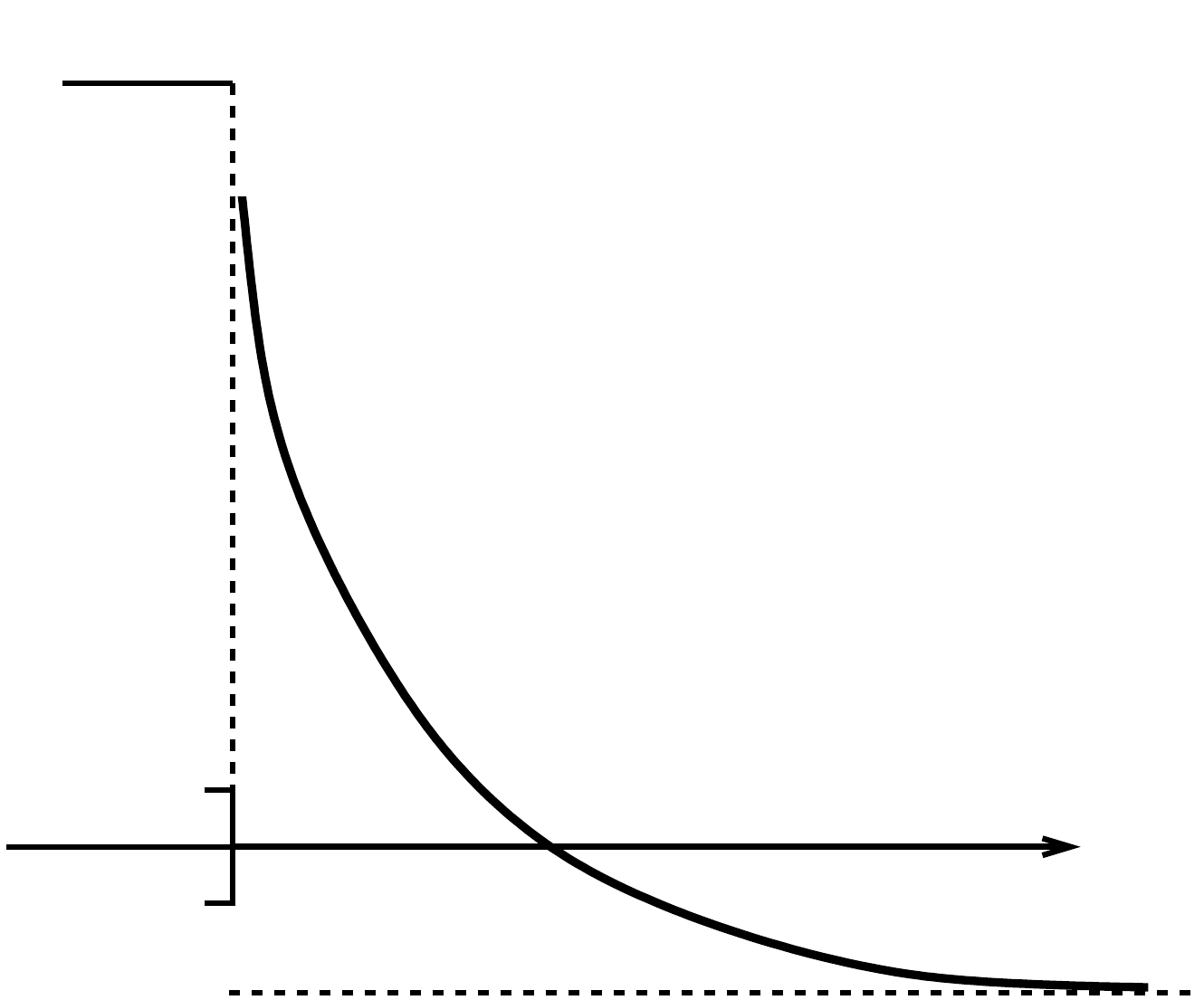_t}} & 
	 \scalebox{0.33}{\input{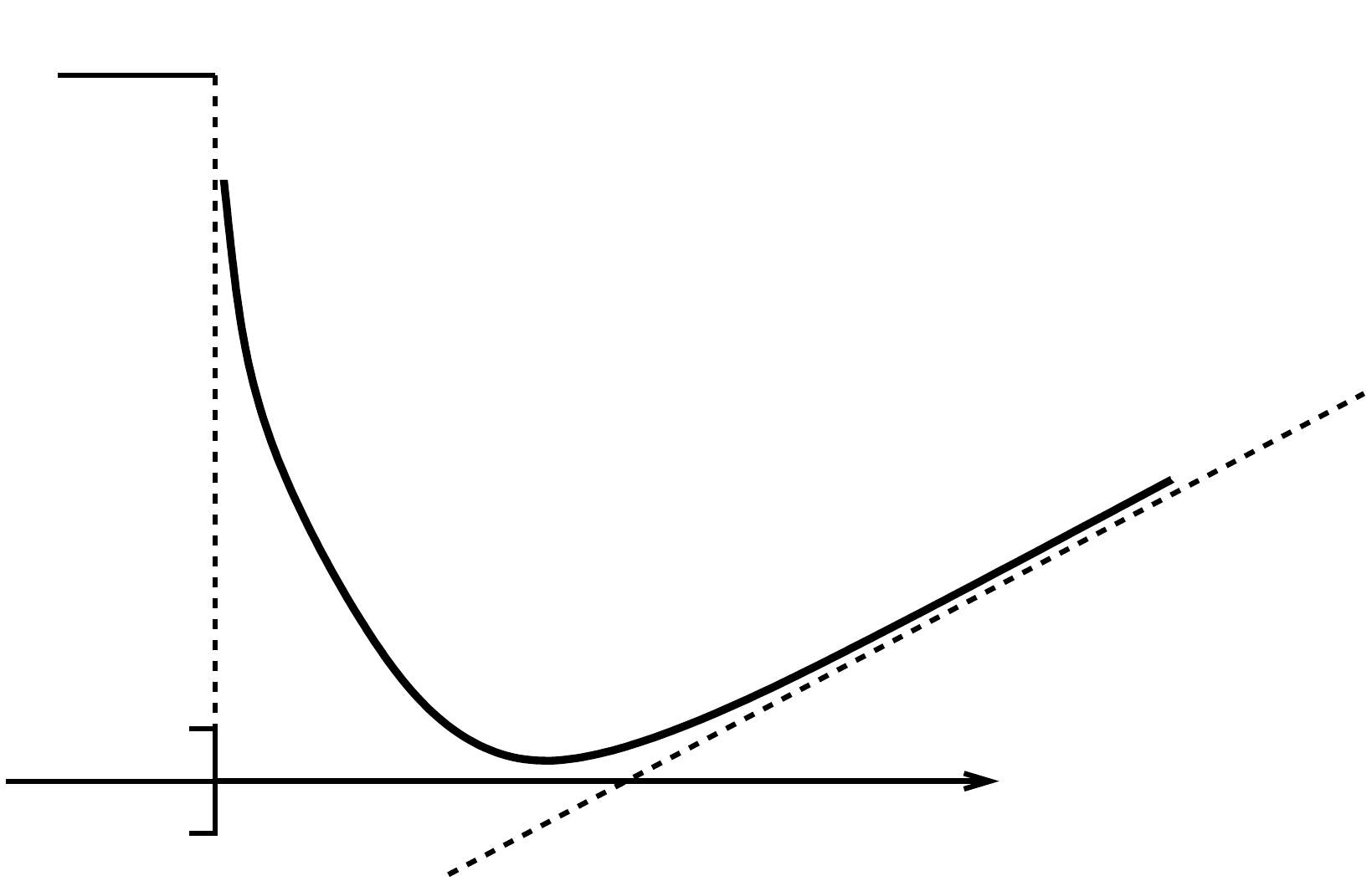_t}} 
      \end{tabular}
    \end{center}
    \caption{Three cases of the graph of a closed convex  function $f$ which
      is convex over its open domain  and asymptotically linear at infinity.
       On the left, the function is not lower bounded and not coercive.
      In the center the function is lower bounded but not coercive.
      On the right, the function is lower bounded and coercive.}
    \label{fig:2}
  \end{figure}

 Even if the Lagrange multiplier $\lambda$ is constrained in the domain $\mathcal D$, the minimization of the dual function $G$ can be done by essentially unconstrained descent algorithms thanks to its coercivity properties. We will make this clearer in Section~\ref{s:methods}. This behavior is another asset of the function $G$.
}

    \subsection{Critical points of $G$  (Theorem \ref{t:main} item 2)}

      In this section, we formalize natural consequences of the formulas  \eqref{e:derivatives} for the derivatives of $G$, which 
    are preparatory to establish  that the $G$ is naturally coercive in the domain $\mathcal D$.
    These first properties are essentially a reformulation of the previous material. 

    These first properties are essentially a reformulation of the previous material. 
    For each Lagrange multiplier $\lambda\in\cD$ one defines the vectors 
    $(c_\alpha^{ij}[\lambda])_{\alpha,j}\in \mathbb R^{r_j}$ which are the components of $\bU_i(\lambda)$, the latter being the $i$th column of $M(\lambda)^{-1}$.
    It defines 
    the  polynomials 
    $q_{ij}[\lambda]\in \sP^{n_j}[\mathbf X]$ by 
    $
    q_{ij}[\lambda](\bX)= \sum_{|\alpha|\leq n_j } c_\alpha^{ij}[\lambda] \mathbf X^\alpha$. 
    With (\ref{e:defB2}), these polynomials define a sum of square $p[\lambda]\in  \sP_{\mathbb{K},+}^n[\mathbf X]$
    \begin{equation}
        p[\lambda](\bX)  =\sum_{j=1}^{j_*}g_j(\bX)  \left(  \sum_{i=1}^{r_*} q_{ij}^2[\lambda](\bX)\right).
        \label{e:plambda}
    \end{equation}
    Using  \eqref{e:defB2},  $p[\lambda](\bx_r)=\sum_{i=1}^{r_*}\left< B_r \mathbf U_i, \mathbf U_i\right> $. 
    So  \eqref{e:derivatives} is rewritten as 
    \begin{equation}
        \frac{\partial G}{\partial \lambda_r}(\lambda)=y_r - p[\lambda](\bx_r) .
        \label{e:partialG}
    \end{equation}
    
    The Proposition below characterizes  that in order to solve Problem \ref{p:main} it is sufficient to find critical points of $G$.
      {  It  is part of  the  Lagrangian duality between the primal formulation
    of Problem \ref{prob:aze}  and the dual formulations (\ref{e:GV}) or (\ref{eq:newgg}).}
    
    \begin{proposition}
        Take    $p\in \sP^n_{\mathbb{K},+}[ \mathbf X]$ and an unisolvent set of interpolation points $\left(\mathbf \bx_r\right)_{1\leq r \leq r_*}$ in $\mathbb{K}$.  Consider $y_r = p(x_r)$ for $1\leq r \leq r_*$.  The following 
        properties are equivalents
        \begin{itemize}
            \item $\lambda^*\in\cD$ is a critical point of $G$, namely $\nabla G(\lambda^*) = 0$.
            \item $\lambda^*\in\cD$ minimizes $G$.
            \item $p(\bX)= p[\lambda^*](\bX)$.
        \end{itemize}
        \label{prop:main}
    \end{proposition}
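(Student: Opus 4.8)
The plan is to prove the equivalence of the three statements by the cycle of implications: critical point $\Rightarrow$ minimizer $\Rightarrow$ recovery of $p$ $\Rightarrow$ critical point. The first implication is immediate: by Lemma~\ref{p:derivatives}, $G = G_\bV$ is convex on $\cD$, and by Lemma~\ref{p:closed} its extension by $+\infty$ outside $\cD$ is a closed proper convex function on $\RR^{r_*}$ with open domain $\cD$. For such a function, any interior critical point $\lambda^*\in\cD$ with $\nabla G(\lambda^*)=0$ is automatically a global minimizer — this is the standard first-order optimality characterization for convex functions, using that $\cD$ is open so there is no boundary contribution.

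For the second implication, I would argue that if $\lambda^*\in\cD$ minimizes $G$, then in particular $\lambda^*$ is a critical point (again because $\cD$ is open, so the minimum over $\RR^{r_*}$ is attained at an interior point of the domain and the gradient must vanish there). Then formula~\eqref{e:partialG}, namely $\partial_{\lambda_r} G(\lambda) = y_r - p[\lambda](\bx_r)$, gives $p[\lambda^*](\bx_r) = y_r = p(\bx_r)$ for every $r$, $1\le r\le r_*$. Since $p[\lambda^*]\in\sP^n_{\mathbb K,+}[\mathbf X]$ by construction \eqref{e:plambda} (the degree bound \eqref{eq:degqij} guarantees $p[\lambda^*]\in\sP^n[\mathbf X]$), and the points $(\bx_r)_r$ are unisolvent for $\sP^n[\mathbf X]$, the two polynomials $p$ and $p[\lambda^*]$ agree at $r_*=\dim\sP^n[\mathbf X]$ unisolvent points and hence are identically equal: $p(\bX)=p[\lambda^*](\bX)$. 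This is really the crux where unisolvence is used, and it is the step I would treat most carefully — one must check that $p[\lambda^*]$ genuinely lies in $\sP^n[\mathbf X]$, which follows from \eqref{eq:degqij} since $\deg(g_j q_{ij}^2)\le \deg g_j + 2n_j\le n$.

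For the third implication, suppose $p(\bX)=p[\lambda^*](\bX)$ as polynomials. Evaluating at the interpolation points gives $p[\lambda^*](\bx_r)=p(\bx_r)=y_r$, so by \eqref{e:partialG} every partial derivative $\partial_{\lambda_r}G(\lambda^*)$ vanishes, i.e. $\nabla G(\lambda^*)=0$, and $\lambda^*\in\cD$ is a critical point. This closes the cycle and establishes that all three conditions are equivalent. I do not anticipate a serious obstacle here: the main point is simply to invoke convexity (Lemmas~\ref{p:derivatives} and~\ref{p:closed}) for the optimality equivalences and unisolvence together with the degree bookkeeping for the polynomial-identity step; everything else is a direct reading of the gradient formula~\eqref{e:partialG}.
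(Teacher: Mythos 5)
Your proposal is correct and follows essentially the same route as the paper: closed convexity of $G$ (Lemmas~\ref{p:derivatives} and~\ref{p:closed}) together with the open domain $\cD$ gives the critical point $\Leftrightarrow$ minimizer equivalence, and the gradient formula~\eqref{e:partialG} combined with unisolvence gives the equivalence with $p = p[\lambda^*]$. You organize the argument as a cycle of three implications and spell out the degree check for $p[\lambda^*]\in\sP^n[\mathbf X]$, which the paper leaves implicit, but the underlying ideas and invoked lemmas are identical.
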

    \begin{proof}
        Since $G$ is closed convex, local minima coincide exactly with critical points, so 
        the first two  points are equivalent. The equivalence between the first and third assertions follows from \eqref{e:partialG} and the unisolvence assumption.
    \end{proof}
    
    \subsection{Number of squares}
    Let us  precise the number of squares  in the  SOS formula 
    (\ref{e:plambda}).  This information is additional with respect to Theorem \ref{t:main}. It brings the possibility to have a cheaper implementation.
    
    \begin{lemma} \label{lemma:nos}
        The number of non zero polynomials in  $\sum_{i=1}^{r_*} q_{ij}^2[\lambda](\bX)$ is less or equal to 
        $r_j$.
    \end{lemma}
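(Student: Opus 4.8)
The plan is to exploit the block structure of the matrix $B(\bX)$ defined in \eqref{e:defB}. Each $B_r=B(\bx_r)$ is block diagonal with $j_*$ diagonal blocks of respective sizes $r_1,\dots,r_{j_*}$, and the same holds for $I$, so $M(\lambda)=I+\sum_{r=1}^{r_*}\lambda_r B_r=\diag\bigl(M_1(\lambda),\dots,M_{j_*}(\lambda)\bigr)$ is block diagonal for every $\lambda$, with $M_j(\lambda)=I_{r_j}+\sum_{r}\lambda_r\,g_j(\bx_r)\,D^{n_j}(\bx_r)\in\RR^{r_j\times r_j}$. A symmetric block-diagonal matrix is positive definite iff each of its blocks is, so for $\lambda\in\cD$ every $M_j(\lambda)$ is invertible and $M(\lambda)^{-1}=\diag\bigl(M_1(\lambda)^{-1},\dots,M_{j_*}(\lambda)^{-1}\bigr)$ is block diagonal as well.

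Next I would set up the bookkeeping between the single index $i\in\{1,\dots,r_*\}$ (here $r_*=\sum_j r_j$, since $i_*=r_*$) and the blocks: let $b(i)$ be the unique block whose index range contains $i$, so that block $j$ contains exactly $r_j$ values of $i$. Because $M(\lambda)^{-1}$ is block diagonal, its $i$-th column $\bU_i(\lambda)$ has nonzero entries only in the rows of block $b(i)$; hence its $j$-th component $c^{ij}[\lambda]\in\RR^{r_j}$ is the zero vector whenever $j\neq b(i)$, and then $q_{ij}[\lambda](\bX)=\sum_{|\alpha|\le n_j}c^{ij}_\alpha[\lambda]\bX^\alpha$ is the zero polynomial. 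Therefore, for a fixed $j$, the only indices $i$ for which $q_{ij}[\lambda]$ can be nonzero are the $r_j$ indices with $b(i)=j$ — whose coefficient vectors are precisely the columns of $M_j(\lambda)^{-1}$ — which yields the bound $r_j$ on the number of nonzero polynomials in $\sum_{i=1}^{r_*}q_{ij}^2[\lambda]$.

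I do not anticipate any genuine obstacle; the only point needing care is checking that block-diagonality of $B(\bX)$ propagates to $M(\lambda)$ and then, via positive definiteness of each block on $\cD$, to $M(\lambda)^{-1}$, together with the index correspondence $i\leftrightarrow b(i)$. As a complementary observation, the same bound can also be obtained without the block structure: writing $\phi_j(\bX)=(\bX^\alpha)_{|\alpha|\le n_j}$ one has $\sum_{i=1}^{r_*}q_{ij}^2[\lambda](\bX)=\phi_j(\bX)^t G_j\,\phi_j(\bX)$ with $G_j=\sum_i c^{ij}[\lambda]\,c^{ij}[\lambda]^t\in\RR^{r_j\times r_j}$ symmetric positive semidefinite, so $\mathrm{rank}\,G_j\le r_j$ and the form is a sum of at most $r_j$ squares; summing over $j$ then recovers that $p[\lambda]$ is a sum of at most $\sum_j r_j=r_*$ squares, consistently with Remark~\ref{anc:lemma2.10}.
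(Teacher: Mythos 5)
Your proof is correct and takes essentially the same approach as the paper: both observe that $M(\lambda)^{-1}$ inherits the block-diagonal structure of $B(\bX)$, so that the $i$-th Cholesky column $\bU_i(\lambda)$ is supported on a single block, forcing $q_{ij}[\lambda]\equiv 0$ whenever $i$ lies outside the $j$-th index range. Your supplementary rank-of-$G_j$ argument proves the slightly weaker statement that the quadratic form \emph{can} be rewritten with at most $r_j$ squares rather than that the given $q_{ij}[\lambda]$ themselves mostly vanish, but the main argument matches the paper's.
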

    
    \begin{proof}
        By construction
        $\left(\mathbf U_1(\lambda), \dots, \mathbf U_{r_*}(\lambda) \right)=\mathbf U(\lambda)=M(\lambda)^{-1}$
        is a block diagonal matrix. The blocks have size  $r_1\times r_1$ until  $r_{j_*}\times r_{j_*}$.
        So, for a given $j$, the polynomials  $q_{ij}[\mathbf X]$ vanish for $1\leq i  \leq r_1+\dots+r_{j-1}$ and
        for  $r_1+\dots+r_{j-1}+r_{j}+1\leq i \leq r_*$.
    \end{proof}

    \begin{remark}
        The result of Lemma \ref{lemma:nos} is nevertheless non optimal 
        in dimension $d=1$. Indeed consider the Luk\'acs Theorem (see Proposition \ref{t:lukacs})
        in the odd  case  $n=2k+1$ and take $g_1(\mathbf X)=\mathbf X$ and $g_2(\mathbf X)=(1-\mathbf X)$ as in (\ref{eq:weights}). So $r_*=n+1$ and  $r_1=r_2=k+1$. Assume that there exists a critical point
        $\lambda_*$ to $G$. Then (\ref{e:plambda}) yields a representation
        $
        p(\mathbf X)= \mathbf X \sum_{i=1}^k p_{i1}^2[\lambda_*](\mathbf X)
        +(1-\mathbf X) \sum_{i=k+1}^{2k} p_{i2}^2[\lambda_*](\mathbf X)$. 
        In terms of the number of squares, here $2k$, it is clearly non optimal with respect to the result of the Luk\'acs Theorem
        which involves only two polynomials whatever $n$.
    \end{remark}

    \section{Coercivity  of  $G$ (Proof of Theorem \ref{th:th2} Item 1)}\label{s:properties}

    A sufficient condition for the existence of a critical point is that  $G$ is infinite at infinity, this is called coercivity, 
    \begin{equation} \label{eq:coco}
        \lim_{\|\lambda\|\rightarrow +\infty}G(\lambda)=+\infty.
    \end{equation}
    A sufficient condition for the uniqueness of the critical points is strict convexity.
    
    In the following, we start in Section~\ref{s:boundedness} by investigating the asymptotic behavior of $G$ along rays starting at $0$. From this knowledge we derive conditions characterizing coercivity in Section~\ref{s:coercivity}. We  characterize strict convexity in Section~\ref{s:strict}.
    
    \subsection{The asymptotic cone}\label{s:boundedness}
    
    There are two types of directions  in $\cD$. For $\mathbf d\in \RR^{r_*}$ with $\|\mathbf d\| = 1$, one  defines
    the rays $R_\mathbf d:= \left\{ \lambda=t\mathbf d\mid  t \geq 0 \right\}$ issued from   the starting point 
    $0\in R_\mathbf d$. Two possibilities occur:
    either $R_\mathbf d$ intersects the boundary 
    $\partial D$
    either it does not. In the first case  
     if one notes $t_\mathbf d>0$ the unique real number such that $t_\mathbf d\mathbf d\in \partial
    \mathcal D$, then $\lim_{t\rightarrow t_\mathbf d^-}G( t\mathbf d)=+\infty$.
    So the function $G$ is bounded from below and coercive in the direction 
     $\mathbf d$.

    In this section one is  interested in the rest of the directions. They generate the so-called asymptotic cone or recession cone of $\cD$. The asymptotic cone is closed,  independent of the starting point and is 
    classically defined \cite{hiriart_1993_convex} by
    $
    C_\infty=\left\{ 
    \lambda\in \mathbb R^{r_*}\;\text{such that}\;\forall \mu\in \cD ,\; t\geq 0 ,\;\mu + t\lambda\in\cD 
    \right\}$.

    \begin{lemma}\label{l:asympcone}
        The asymptotic cone of $\cD$ is 
        $      C_\infty=
        \left\{ \lambda \in  \mathbb R^{r_*}\mid \sum_{r= 1}^{r_*} \lambda_r B_r \blue{\succeq} 0 \right\} 
        $.
    \end{lemma}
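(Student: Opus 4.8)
The plan is to prove the two inclusions separately, using the standard characterization of the asymptotic (recession) cone of a convex set together with the special structure that $\mathcal D$ is the slice of the positive-definite cone under the affine map $\lambda \mapsto M(\lambda) = I + \sum_r \lambda_r B_r$.

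First I would prove the easy inclusion $\bigl\{\lambda \mid \sum_r \lambda_r B_r \succeq 0\bigr\} \subseteq C_\infty$. Take $\lambda$ with $\sum_r \lambda_r B_r \succeq 0$, let $\mu \in \mathcal D$ so that $M(\mu) \succ 0$, and let $t \geq 0$. Then
\[
M(\mu + t\lambda) = I + \sum_{r=1}^{r_*}(\mu_r + t\lambda_r) B_r = M(\mu) + t\sum_{r=1}^{r_*}\lambda_r B_r \succ 0,
\]
since a positive-definite matrix plus a nonnegative multiple of a positive-semidefinite matrix is positive definite. Hence $\mu + t\lambda \in \mathcal D$ for all $\mu \in \mathcal D$ and $t \geq 0$, which is exactly the defining condition for $\lambda \in C_\infty$.

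For the reverse inclusion $C_\infty \subseteq \bigl\{\lambda \mid \sum_r \lambda_r B_r \succeq 0\bigr\}$, I would argue by contradiction or directly. Suppose $\lambda \in C_\infty$. Taking $\mu = 0 \in \mathcal D$, we get $M(t\lambda) = I + t\sum_r \lambda_r B_r \succ 0$ for all $t \geq 0$; in particular, for every unit vector $v$ one has $\|v\|^2 + t\,\langle (\sum_r \lambda_r B_r) v, v\rangle > 0$ for all $t \geq 0$. If the symmetric matrix $\sum_r \lambda_r B_r$ had a negative eigenvalue, choosing $v$ a corresponding unit eigenvector would make the left-hand side $1 + t\nu$ with $\nu < 0$, which becomes negative for $t$ large — a contradiction. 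Therefore all eigenvalues of $\sum_r \lambda_r B_r$ are nonnegative, i.e.\ $\sum_r \lambda_r B_r \succeq 0$.

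Putting the two inclusions together gives the claimed identity. There is no serious obstacle here: the only point requiring a little care is the passage to the closed condition $\succeq 0$ from the open conditions $\succ 0$ along the ray, which is handled cleanly by the eigenvalue argument above (equivalently, one may note that $C_\infty$ is closed and that $\{\sum_r \lambda_r B_r \succeq 0\}$ is the closure of the cone $\{\sum_r \lambda_r B_r \succ 0\} \cup \{0\}$, but the direct eigenvalue computation is the most transparent route). I would also remark in passing that since $0 \in \mathcal D$, the asymptotic cone is nonempty and the above description shows it is a closed convex cone, consistent with the general theory of \cite{hiriart_1993_convex}.
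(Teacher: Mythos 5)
Your proof is correct and follows essentially the same two-inclusion argument as the paper: the forward inclusion by noting $M(\mu)+t\sum_r\lambda_r B_r\succ0$, and the reverse by setting $\mu=0$ and observing a negative eigenvalue of $\sum_r\lambda_r B_r$ would push $M(t\lambda)$ out of the positive-definite cone for large $t$. The only cosmetic difference is that you spell out the eigenvector computation, whereas the paper leaves it implicit.
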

    \begin{proof} Let $\lambda,\mu$ such that $\sum_{r= 1}^{r_*} \lambda_r B_r \succeq 0$ and $I + \sum_{r= 1}^{r_*} \mu_r B_r 
    \succ 0$. Then, $I + \sum_{r= 1}^{r_*} (\mu_r + t\lambda_r) B_r > 0$ for all $t\geq0$, so
        $\lambda$ belongs to the asymptotic cone. Conversely let $\lambda$ such that for all $\mu$ and $t\geq0$, $\mu + t\lambda\in\cD$. If $\sum_{r= 1}^{r_*} \lambda_r B_r$ had a negative eigenvalue then for $t$ large enough $I + t\sum_{r= 1}^{r_*} \lambda_r B_r$  would also have a negative eigenvalue which would contradict the fact that $t\lambda\in\cD$.
    \end{proof}
    
    The main question  is the asymptotic behavior of $G$ in directions in $C_\infty$.   
     
     Some preparatory material is provided.
    One introduces the polynomial valued vector $L(\mathbf X)$  with components being the Lagrange polynomials associated with the set of points $(\bx_r)_{1\leq r \leq r_*}$ evaluated at $\bx$, namely
    \begin{equation} \label{e:lagrange}
        L(\mathbf X)=\left(l_r(\mathbf X) \right)_{1\leq r \leq r_*} \in \RR^{r_*},
    \end{equation}
    where the Lagrange interpolation polynomials $l_r\in \sP^n[\mathbf X]$ are defined by $l_r\left(\mathbf x_s \right)=\delta_{rs}$
    for $1\leq r,s\leq r_*$, where $\delta_{rs}$ denotes the Kronecker symbol.
    The vector $L(\mathbf X)$ will be called a Lagrange vector.
    The polynomial $p$ which takes the value $y_r$ at $\mathbf x_r$ satisfies the Lagrange interpolation formula
    \begin{equation} \label{eq:plambda}
        p(\mathbf X)= \sum_{r=1}^{r_*}y_r l_r(\mathbf X)=\lla \mathbf y, L(\mathbf X)\rra.
    \end{equation}
    One can show another  interpolation property characteristics of our problem.
    
    \begin{lemma}\label{l:lagrangeCinf} One has
        $          B(\bX)=\sum_{r=1}^{r_*}l_r(\bX)B_r$. 
        For  $\bx\in\mathbb K$, $B(\bx)$ is positive semidefinite and
        $      L(\mathbf x) \in  C_\infty $.
    \end{lemma}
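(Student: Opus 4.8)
The plan is to establish the three assertions in order, exploiting the fact that both $B(\bX)$ and $\sum_r l_r(\bX) B_r$ are polynomial-valued symmetric matrices of degree at most $n$ in $\bX$, so they coincide everywhere once they coincide at the $r_*$ unisolvent points $\bx_s$. First I would check entrywise: each scalar entry of $B(\bX)$ is, by the definitions \eqref{e:defB} and \eqref{eq:degqij}, a polynomial of the form $g_j(\bX)\bX^\alpha\bX^\beta$ with $|\alpha|,|\beta|\le n_j$, hence of total degree at most $\deg(g_j) + 2n_j \le n$; so every entry lies in $\sP^n[\bX]$. The same is true of $\sum_{r=1}^{r_*} l_r(\bX) B_r$, since $l_r\in\sP^n[\bX]$ and the $B_r$ are constant matrices. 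Now evaluate both sides at $\bx_s$: the left side gives $B(\bx_s)=B_s$, and the right side gives $\sum_r l_r(\bx_s) B_r = \sum_r \delta_{rs} B_r = B_s$ by the defining property of the Lagrange polynomials. Since two elements of $\sP^n[\bX]$ (applied entrywise) that agree at a unisolvent set are equal, the identity $B(\bX)=\sum_{r=1}^{r_*} l_r(\bX) B_r$ follows.

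For the second assertion, fix $\bx\in\mathbb K$. By \eqref{e:defB} the matrix $B(\bx)$ is block diagonal with blocks $g_j(\bx) D^{n_j}(\bx)$. For $\bx\in\mathbb K$ one has $g_j(\bx)\ge 0$ for every $j$ by the definition \eqref{eq:1} of $\mathbb K$, so it suffices to show each Hankel block $D^{n_j}(\bx)$ is positive semidefinite. But $D^{n_j}_{\alpha,\beta}(\bx)=\bx^\alpha\bx^\beta$ is a rank-one outer product: if $v(\bx)$ denotes the vector $(\bx^\alpha)_{|\alpha|\le n_j}$, then $D^{n_j}(\bx)=v(\bx) v(\bx)^t\succeq 0$. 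Hence $g_j(\bx) D^{n_j}(\bx)\succeq 0$ for each $j$, and a block-diagonal matrix with positive semidefinite blocks is positive semidefinite, giving $B(\bx)\succeq 0$. (Alternatively this is immediate from \eqref{e:defB2}, which shows $\langle B(\bx)\bU,\bU\rangle = \sum_j g_j(\bx)\sum_i q_{ij}^2(\bx)\ge 0$ for $\bx\in\mathbb K$.)

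The third assertion is now a direct consequence of the first two together with Lemma \ref{l:asympcone}. Indeed, by the identity just proved, $\sum_{r=1}^{r_*} l_r(\bx) B_r = B(\bx)$, and for $\bx\in\mathbb K$ the right-hand side is positive semidefinite. Reading off the coefficients, the vector $L(\bx)=(l_r(\bx))_{1\le r\le r_*}$ satisfies $\sum_r L(\bx)_r B_r \succeq 0$, which is exactly the membership condition for the asymptotic cone in Lemma \ref{l:asympcone}; hence $L(\bx)\in C_\infty$. I do not expect any genuine obstacle here: the only point requiring a little care is the degree count showing every entry of $B(\bX)$ genuinely lies in $\sP^n[\bX]$ (so that unisolvent interpolation applies), and this is guaranteed precisely by the choice $n_j = \lfloor (n-\deg(g_j))/2\rfloor$ in \eqref{eq:degqij}.
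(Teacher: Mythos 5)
Your proof is correct and takes essentially the same approach as the paper: both establish the identity $B(\bX)=\sum_r l_r(\bX)B_r$ by observing that every scalar quantity extracted from $B(\bX)$ is a polynomial of degree at most $n$ (via the degree count $\deg(g_j)+2n_j\le n$) and therefore equals its Lagrange interpolant, the only difference being that you argue entrywise whereas the paper tests against bilinear forms $\lla \mathbf{W}, \cdot\,\mathbf{Z}\rra$, which is an equivalent packaging. The positive semidefiniteness and the conclusion $L(\mathbf{x})\in C_\infty$ via Lemma \ref{l:asympcone} are handled identically.
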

    
    \begin{proof}
        Let $\mathbf{W}, \mathbf{Z}\in \RR^{r_*}$ be   
        the coefficients of some polynomials $(p_{j})_{1\leq j\leq j_*}$ and  $(q_{j})_{1\leq j\leq j_*}$.
        By definition (\ref{e:defB}-\ref{e:defB2}) of $B(\bx)$ which is symmetric  one  knows that
        $$
        \lla \mathbf{W}, \left(B(\bx) - \sum_{r=1}^{r_*} l_r(\mathbf x) B_r \right) \mathbf{Z} \rra 
        =\sum_{j=1}^{j_* }\left(g_j(\mathbf x)p_{j}(\mathbf x) q_{j}(\mathbf x) - \sum_{r=1}^{r_*} l_r(\mathbf x)  g_j(\mathbf x_r) p_{j}(\mathbf x_r) q_{j}(\mathbf x_r) \right)
        =0.
        $$
        Since  $\mathbf{W},\mathbf{Z}$ are arbitrary, it yields the first part of the claim.
        Also for $\mathbf x\in \mathbb K$, one has that $g_j(\mathbf x)\geq 0$. Therefore 
        $
        \lla \mathbf{W}_1, B(\bx) \mathbf{W}_1 \rra=\sum_{j=1}^{j_* }g_j(\mathbf x)p_{j}(\mathbf x)^2\geq0$
        which yields that $B(\bx)\blue{\succeq} 0$. One gets that    $L(\mathbf x) \in  C_\infty$.
    \end{proof}
    
    In the following there are three different results concerning the behavior of $G$ in the asymptotic cone: either,  Lemma \ref{l:nonegBound},  $\inf_{t>0, \lambda \in C_\infty}G(t\lambda )=-\infty$; 
    or,   Proposition \ref{p:Farkas},  $\inf_{t>0, \lambda \in C_\infty}G(t\lambda ) >-\infty$; or even better, Proposition \ref{p:coercive},
    the function $G$ is coercive. 
    
    \begin{lemma}\label{l:nonegBound}
        Assume that there exists $\mathbf z\in \mathbb K$ such that $p(\mathbf z)<0$. Then $
        \lim_{t\rightarrow+\infty}G(tL(\mathbf z)) = -\infty$ and thus the corresponding function $G$ is not bounded from 
        below in $C_\infty$.
    \end{lemma}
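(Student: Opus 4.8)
The plan is to evaluate $G$ explicitly along the ray $t\mapsto tL(\mathbf z)$ and to show that its leading term is linear and negative. First I would record that the whole ray lies in the domain: by Lemma~\ref{l:lagrangeCinf}, $\mathbf z\in\mathbb K$ gives $L(\mathbf z)\in C_\infty$, and since $0\in\cD$ this means $tL(\mathbf z)\in\cD$ for every $t\ge 0$, so $G(tL(\mathbf z))$ is finite along the ray. Next, using the identity $B(\mathbf X)=\sum_{r=1}^{r_*}l_r(\mathbf X)B_r$ of Lemma~\ref{l:lagrangeCinf} evaluated at $\mathbf z$, one has $M(tL(\mathbf z))=I+t\sum_{r=1}^{r_*}l_r(\mathbf z)B_r=I+tB(\mathbf z)$; and by the Lagrange interpolation formula \eqref{eq:plambda}, $\langle tL(\mathbf z),\mathbf y\rangle=t\langle\mathbf y,L(\mathbf z)\rangle=t\,p(\mathbf z)$. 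Substituting these into the trace formula \eqref{eq:newgg} for $G$ yields the scalar expression $G(tL(\mathbf z))=\mathrm{tr}\big((I+tB(\mathbf z))^{-1}\big)+t\,p(\mathbf z)$.

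The second step is to bound the trace term uniformly in $t$. By Lemma~\ref{l:lagrangeCinf}, $B(\mathbf z)\succeq 0$, hence it has nonnegative eigenvalues $\mu_1,\dots,\mu_{r_*}\ge 0$ and $\mathrm{tr}\big((I+tB(\mathbf z))^{-1}\big)=\sum_{k=1}^{r_*}(1+t\mu_k)^{-1}$, each summand lying in $(0,1]$ for every $t\ge 0$. Therefore $0<\mathrm{tr}\big((I+tB(\mathbf z))^{-1}\big)\le r_*$ for all $t\ge 0$.

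Combining the two steps, $G(tL(\mathbf z))\le r_*+t\,p(\mathbf z)$ for all $t\ge 0$; since $p(\mathbf z)<0$ by hypothesis, the right-hand side tends to $-\infty$ as $t\to+\infty$, which proves $\lim_{t\to+\infty}G(tL(\mathbf z))=-\infty$. Since $L(\mathbf z)$ is a nonzero element of $C_\infty$ (the Lagrange polynomials interpolate the constant $1$, so $\sum_r l_r(\mathbf z)=1$ and they cannot all vanish), this also gives $\inf_{t>0,\ \lambda\in C_\infty}G(t\lambda)=-\infty$, i.e.\ $G$ is not bounded below in the asymptotic cone. There is no genuine obstacle here: the only point deserving a moment of care is confirming that the ray $\{tL(\mathbf z):t\ge0\}$ stays inside $\cD$, which is exactly $L(\mathbf z)\in C_\infty$ together with $0\in\cD$; everything else is a direct substitution into \eqref{eq:newgg} followed by an elementary eigenvalue bound.
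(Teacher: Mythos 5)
Your proof is correct and follows essentially the same route as the paper: restrict $G$ to the ray $tL(\mathbf z)$ using Lemma~\ref{l:lagrangeCinf}, observe $M(tL(\mathbf z))\succeq I$ so the trace term is bounded by $r_*$, and conclude the linear term $t\,p(\mathbf z)$ with $p(\mathbf z)<0$ drives $G$ to $-\infty$. The extra detail you supply (the eigenvalue decomposition of $B(\mathbf z)$, the explicit identity $M(tL(\mathbf z))=I+tB(\mathbf z)$, and the remark that $L(\mathbf z)\neq 0$) merely unpacks steps the paper leaves implicit.
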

    
    \begin{proof}
        The half line generated by $L(\mathbf z)$ is included in  $\cD$ by Lemma~\ref{l:lagrangeCinf} and so all for $t\geq 0$, one has
        $
        G\left(  tL(\mathbf z)\right) =
        \mbox{tr}\left(M( tL(\mathbf z))^{-1}\right) +t p(\mathbf z)
        $. Since $\lambda = tL(\mathbf z)\in C_\infty$, one has $M( \lambda )\blue{\succeq} I$ so
        $
        G(t\lambda)\leq r_*  +t p(\mathbf{z})\underset{t\to\infty}{\longrightarrow}-\infty$. 
    \end{proof}
    
    \begin{proposition} \label{p:Farkas}
        Consider   $p\in \sP^n_{\mathbb{K},+}[ \mathbf X]$, a unisolvent set of interpolation points $\left(\mathbf \bx_r\right)_{1\leq r \leq r_*}$ in $\mathbb{K}$ and define $y_r = p(\bx_r)$ for $1\leq r \leq r_*$. The following properties are equivalent.
        \begin{itemize}
            \item  For any $\lambda\in C_\infty$, one has $\lla\lambda, \mathbf y\rra \geq 0$.
            \item  There exists  polynomials $q_{ij}$ for $1\leq  j \leq j_*$ and $1\leq i \leq r_*=r_*$ such that
            \[
            p(\bX)=\sum_{j=1}^{j_*} g_j(\bX) \sum_{i=1}^{r_*} q_{ij}^2(\bX) .
            \]
        \end{itemize}
    \end{proposition}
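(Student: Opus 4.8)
The plan is to recognize the stated equivalence as an instance of conic duality: I will produce a nonempty closed convex cone $K\subset\RR^{r_*}$ such that membership $\mathbf y\in K$ is equivalent to assertion~(2), and whose dual cone is exactly $C_\infty$; then assertion~(1) reads $\mathbf y\in(C_\infty)^*$, and the whole statement becomes the bipolar theorem.

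First I would introduce the cone of interpolation data reproducible by the quadratic module,
\[
K=\left\{\left(\mathrm{tr}(B_rZ)\right)_{1\le r\le r_*}\ :\ Z=Z^t\in\RR^{r_*\times r_*},\ Z\succeq 0\right\}.
\]
Every $Z\succeq 0$ decomposes as $Z=\sum_{i=1}^{r_*}\bU_i\bU_i^t$, and $\mathrm{tr}(B_r\bU_i\bU_i^t)=\lla B_r\bU_i,\bU_i\rra$, so by \eqref{e:defB2} the cone $K$ is exactly the image, under the evaluation map $\mathrm{ev}:\sP^n[\mathbf X]\to\RR^{r_*}$, $q\mapsto(q(\bx_r))_r$, of the closed convex cone $\mathcal W=\sum_{j=1}^{j_*}g_j\,\sP^{n_j}[\mathbf X]^2$ of Remark~\ref{anc:lemma2.10}: an element $\sum_jg_j\sum_iq_{ij}^2$ of $\mathcal W$ takes the value $\sum_i\lla B_r\bU_i,\bU_i\rra$ at $\bx_r$, and conversely $r_*$ Cholesky factors always suffice. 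Since the points are unisolvent, $\mathrm{ev}$ is a linear isomorphism, hence a homeomorphism, so $K$ is a nonempty closed convex cone. Moreover $\mathrm{ev}(p)=\mathbf y$ and $\mathrm{ev}$ is injective, whence assertion~(2) is equivalent to $\mathbf y\in K$.

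Next I would identify the dual cone. For $\lambda\in\RR^{r_*}$ and $Z\succeq 0$,
\[
\lla\lambda,\left(\mathrm{tr}(B_rZ)\right)_r\rra=\sum_{r=1}^{r_*}\lambda_r\,\mathrm{tr}(B_rZ)=\mathrm{tr}\!\left(\Big(\sum_{r=1}^{r_*}\lambda_rB_r\Big)Z\right),
\]
so $\lla\lambda,\mathbf z\rra\ge 0$ for all $\mathbf z\in K$ precisely when $\mathrm{tr}\big((\sum_r\lambda_rB_r)Z\big)\ge 0$ for all $Z\succeq 0$, which by self-duality of the positive semidefinite cone for the trace inner product means $\sum_r\lambda_rB_r\succeq 0$, i.e. $\lambda\in C_\infty$ by Lemma~\ref{l:asympcone}. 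Thus $K^*=C_\infty$. Since $K$ is a nonempty closed convex cone, the bipolar theorem \cite{hiriart_1993_convex} gives $K=(K^*)^*=(C_\infty)^*$; combined with the equivalence $(2)\Leftrightarrow\{\mathbf y\in K\}$ from the previous paragraph, this says that (2) holds if and only if $\lla\lambda,\mathbf y\rra\ge 0$ for every $\lambda\in C_\infty$, i.e. if and only if (1) holds. (The implication (2)$\Rightarrow$(1) is also transparent directly: from $p=\sum_jg_j\sum_iq_{ij}^2$ one gets $y_r=\sum_i\lla B_r\bU_i,\bU_i\rra$, hence $\lla\lambda,\mathbf y\rra=\sum_i\lla(\sum_r\lambda_rB_r)\bU_i,\bU_i\rra\ge 0$ for $\lambda\in C_\infty$.)

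The only genuinely nontrivial input is the closedness of $K$, equivalently of the truncated quadratic module $\mathcal W$, which is exactly what Remark~\ref{anc:lemma2.10} provides; everything else is the bookkeeping of the correspondence between positive semidefinite matrices, Cholesky factors and evaluation at the unisolvent set, plus the self-duality of the semidefinite cone. I therefore expect the main obstacle to be making the identification $K=\mathrm{ev}(\mathcal W)$ and its closedness precise, after which the bipolar theorem closes the argument with no further work.
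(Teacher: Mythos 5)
Your argument is correct and is, in substance, the paper's own proof repackaged: the paper invokes the Generalized Farkas theorem of \cite[Theorem III.4.3.4]{hiriart_1993_convex} on the rank-one evaluation vectors $s_{\mathbf W}=(\lla B_r\mathbf W,\mathbf W\rra)_{1\le r\le r_*}$, whose conical hull is exactly your cone $K=\mathrm{ev}(\mathcal W)$, and that Farkas theorem is precisely the bipolar theorem for closed cones that you apply after computing $K^*=C_\infty$ via self-duality of the PSD cone. You are somewhat more explicit than the paper about the one non-automatic ingredient, the closedness of $K$ (equivalently of the truncated quadratic module $\mathcal W$, asserted in Remark~\ref{anc:lemma2.10}), which the paper's proof uses tacitly when it passes from ``$\mathbf y$ lies in the closed conical hull'' to a finite conical combination $\mathbf y=\sum_i\alpha_i s_{\mathbf W_i}$.
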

    
    \begin{proof}
        
        For $\mathbf W\in \mathbb R^{r_*}$,
        define the vector $s_\mathbf W = \left(\lla B_r\mathbf W, \mathbf W \rra\right)_{1\leq r \leq r_*}\in \mathbb{R}^{r_*}$. 
        A    equivalent definition   of $C_\infty$  is 
        $
        C_\infty=\left\{ \lambda \in \mathbb R^{r_*} \mbox{ such that } \lla s_\mathbf W, \lambda \rra  \geq 0
        \mbox{ for all } \mathbf W\in \mathbb R^{r_*}
        \right\}$. 
        In order to prove the result, one can invoke  the Generalized Farkas Theorem  
        (\cite[Theorem III.4.3.4 page 131]{hiriart_1993_convex} with the correspondence  $\mathbf y=\mathbf b$).
        It already states that our first assertion is equivalent to $\by$  being in the closed convex conical hull of the linear forms $s_\mathbf W$, that
        is
        $   \mathbf y = \sum_{i=1}^{r_*} \alpha_i s_{\mathbf W_i}$ where $ \alpha_i\geq 0$ for all $i$,  and $r_*$ is sufficiently large.
        It is rewritten as $   \mathbf y = \sum_{i=1}^{r_*}  s_{\mathbf Z_i}$ for $\mathbf Z_i=(\alpha_i)^\frac12 \mathbf W_i$.
        Using \eqref{e:defB2}, the latter rewrites as our second assertion.
    \end{proof}
    
    \subsection{Coercivity}\label{s:coercivity}
    
    Now we investigate the conditions such that $G$ is  infinite at infinity (coercivity).
    A first negative result about coercivity is the following. The proof easily adapted from the one of Lemma~\ref{l:nonegBound}.
    
    \begin{lemma}
        Assume there exists $\mathbf z\in \mathbb K$ such that $p(\mathbf z)=0$. Then  $G(tL(\mathbf z))$ remains bounded as $t\to+ \infty$
        and $G$ is not coercive. 
        \label{l:touchzero}
    \end{lemma}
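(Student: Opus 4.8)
The plan is to mimic the proof of Lemma~\ref{l:nonegBound}, replacing the strict inequality $p(\mathbf z)<0$ by the equality $p(\mathbf z)=0$. The crucial point is that the direction $L(\mathbf z)$ still lies in the asymptotic cone $C_\infty$ by Lemma~\ref{l:lagrangeCinf}, since $\mathbf z\in\mathbb K$ forces $B(\mathbf z)=\sum_{r=1}^{r_*}l_r(\mathbf z)B_r\succeq 0$. Hence the entire ray $R_{L(\mathbf z)}=\{tL(\mathbf z)\mid t\geq 0\}$ is contained in $\mathcal D$, so $G(tL(\mathbf z))$ is finite for all $t\geq 0$.

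First I would write out the explicit expression using \eqref{eq:newgg} and the Lagrange interpolation identity \eqref{eq:plambda}: for $t\geq 0$,
\[
G\bigl(tL(\mathbf z)\bigr)=\operatorname{tr}\Bigl(M(tL(\mathbf z))^{-1}\Bigr)+t\,\langle \mathbf y,L(\mathbf z)\rangle
=\operatorname{tr}\Bigl(M(tL(\mathbf z))^{-1}\Bigr)+t\,p(\mathbf z).
\]
Now the linear term vanishes identically because $p(\mathbf z)=0$. It remains to control the trace term. Since $L(\mathbf z)\in C_\infty$ we have $\sum_{r=1}^{r_*}l_r(\mathbf z)B_r\succeq 0$, and therefore $M(tL(\mathbf z))=I+t\sum_{r=1}^{r_*}l_r(\mathbf z)B_r\succeq I$ for every $t\geq 0$. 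This gives $0\prec M(tL(\mathbf z))^{-1}\preceq I$, whence $0<\operatorname{tr}(M(tL(\mathbf z))^{-1})\leq r_*$. Consequently $0<G(tL(\mathbf z))\leq r_*$ for all $t\geq 0$, so $G(tL(\mathbf z))$ remains bounded as $t\to+\infty$.

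Finally, boundedness along this particular ray directly contradicts the coercivity condition \eqref{eq:coco}, since $\|tL(\mathbf z)\|\to+\infty$ as $t\to+\infty$ (note $L(\mathbf z)\neq 0$ because the Lagrange polynomials $l_r$ cannot all vanish at $\mathbf z$: they sum to the constant polynomial $1$). Hence $G$ is not coercive, which completes the proof. There is no real obstacle here — the only minor point worth spelling out is the nonvanishing of $L(\mathbf z)$, which ensures the ray genuinely escapes to infinity; everything else is a one-line specialization of the argument already used for Lemma~\ref{l:nonegBound}.
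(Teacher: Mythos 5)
Your proof is correct and follows exactly the route the paper intends: the paper's proof of Lemma~\ref{l:touchzero} simply states it is ``easily adapted from the one of Lemma~\ref{l:nonegBound},'' and your argument is precisely that adaptation, with the linear term $t\,p(\mathbf z)$ now vanishing. The extra remark that $L(\mathbf z)\neq 0$ (since the Lagrange polynomials sum to $1$) is a sensible clarification that the paper leaves implicit.
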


    Thus we can only hope for coercivity starting from strictly positive polynomials. Let us know define a specific useful polynomial denoted as $p_B$. 
    
    \begin{definition}\label{d:specialPoly}
        Define the polynomial
        $
        p_B(\bX)= \mathrm{tr}\left(B(\bX)\right)\in\sP^n_{\mathbb{K},+}[\bX]$, 
        where $B(\bX)$ is the matrix defined in \eqref{e:defB}.
    \end{definition}

    A key property of this polynomial is the following.
    
    \begin{lemma}\label{l:specialp}
        Assume that the matrices $\left\{B_r\right\}_{1\leq r \leq r_*}$ are linearly independent. Then there exists a constant $c_*>0$ such that 
        \begin{equation} \label{eq:235}
            c_* \| \lambda\|\leq\sum_{r=1}^{r_*}\lambda_rp_B(\bx_r), \qquad \forall\lambda\in C_\infty.
        \end{equation}
    \end{lemma}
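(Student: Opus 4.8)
The plan is to reduce the inequality to a statement about the trace of $\sum_r\lambda_r B_r$ and then to conclude by a compactness argument on the unit sphere of the asymptotic cone, with linear independence of the $B_r$ entering exactly at the point where strict positivity is needed.

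First I would evaluate $p_B$ at the interpolation points. By Lemma~\ref{l:lagrangeCinf} one has $B(\bX)=\sum_{s=1}^{r_*}l_s(\bX)B_s$, and since $l_s(\bx_r)=\delta_{rs}$ this gives $B(\bx_r)=B_r$, hence $p_B(\bx_r)=\mathrm{tr}(B(\bx_r))=\mathrm{tr}(B_r)$ for $1\le r\le r_*$. Consequently, for every $\lambda\in\RR^{r_*}$,
\[
\sum_{r=1}^{r_*}\lambda_r p_B(\bx_r)=\mathrm{tr}\!\left(\sum_{r=1}^{r_*}\lambda_r B_r\right),
\]
so it suffices to bound $f(\lambda):=\mathrm{tr}\bigl(\sum_{r=1}^{r_*}\lambda_r B_r\bigr)$ from below by $c_*\|\lambda\|$ on $C_\infty$.

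Next I would consider the set $S:=C_\infty\cap\{\lambda\in\RR^{r_*}\mid\|\lambda\|=1\}$. The cone $C_\infty$ is closed (Lemma~\ref{l:asympcone}), so $S$ is compact, and $f$ is linear, hence continuous. If $S=\emptyset$ then $C_\infty=\{0\}$ and \eqref{eq:235} is trivial for any $c_*>0$; otherwise, for $\lambda\in S$ the matrix $\sum_r\lambda_r B_r$ is positive semidefinite by Lemma~\ref{l:asympcone}, and it is nonzero because the $B_r$ are linearly independent and $\lambda\neq0$. A nonzero positive semidefinite matrix has at least one strictly positive eigenvalue, so $f(\lambda)=\mathrm{tr}\bigl(\sum_r\lambda_r B_r\bigr)>0$ for every $\lambda\in S$; by continuity and compactness, $c_*:=\min_{\lambda\in S}f(\lambda)$ is attained and is strictly positive.

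Finally, for an arbitrary $\lambda\in C_\infty$ with $\lambda\neq0$, applying the bound at $\lambda/\|\lambda\|\in S$ and using the linearity of $f$ yields $f(\lambda)\ge c_*\|\lambda\|$; for $\lambda=0$ the inequality is an equality. Combining with the identity above proves \eqref{eq:235}. The only delicate point is the strict positivity of the minimum, which is precisely where linear independence of $\{B_r\}_{1\le r\le r_*}$ is used: without it, $\sum_r\lambda_r B_r$ could vanish for some unit $\lambda\in C_\infty$ and $c_*$ would collapse to $0$. (Equivalently, one may observe that $\lambda\mapsto\|\sum_r\lambda_r B_r\|_{\mathrm{nuclear}}$ is a genuine norm on $\RR^{r_*}$ under linear independence, which coincides with $f$ on $C_\infty$, and then invoke equivalence of norms in finite dimension.)
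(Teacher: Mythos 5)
Your proof is correct and rests on the same two pillars as the paper's: the identity $\sum_r\lambda_r p_B(\bx_r)=\mathrm{tr}\bigl(\sum_r\lambda_r B_r\bigr)$, and linear independence of the $B_r$ to force strict positivity. The organization differs slightly: the paper factors the estimate through the operator norm, first bounding $\bigl\|\sum_r\lambda_r B_r\bigr\|\leq\mathrm{tr}\bigl(\sum_r\lambda_r B_r\bigr)$ for $\lambda\in C_\infty$ (since the matrix is positive semidefinite, its norm equals its largest eigenvalue, which is at most the trace), and then invoking that $\lambda\mapsto\bigl\|\sum_r\lambda_r B_r\bigr\|$ is a norm on all of $\RR^{r_*}$ by injectivity, hence equivalent to $\|\lambda\|$. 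You skip the intermediate operator-norm step and run the compactness argument directly on the slice $S=C_\infty\cap\{\|\lambda\|=1\}$, bounding the trace from below there. The two routes are logically equivalent (norm equivalence in finite dimensions is itself a compactness statement); the paper's version yields the bound $c_*\|\lambda\|\leq\bigl\|\sum_r\lambda_r B_r\bigr\|$ on all of $\RR^{r_*}$ rather than just on $C_\infty$, which is marginally more information, while yours stays closer to the cone where the final inequality actually lives. Both are fine; your parenthetical remark about the nuclear norm is exactly the bridge between the two presentations.
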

    
    \begin{proof}
        Let $\lambda\in C_\infty$. The matrix  $\sum_r \lambda_r B_r$ is symmetric and positive semidefinite. So  its matrix norm can be controlled by its largest eigenvalue and thus by its trace, namely
        $
        \left\nrm \sum_{r=1}^{r_*} \lambda_r B_r \right\nrm\leq\mbox{tr}\left(\sum_{r=1}^{r_*}
        \lambda_r B_r\right) = \sum_{r=1}^{r_*}\lambda_r  p_B(\bx_r) $. 
        Second  we also know that $\lambda\rightarrow\sum_{r=1}^{r_*} \lambda_r B_r$ is injective thanks to the linear independence assumption. Thus there a constant $c_*>0$ such that 
        $
        c_*\| \lambda\| \leq \left\nrm  \sum_{r=1}^{r_*} \lambda_r B_r\right\nrm$. 
        Combining both inequalities ends the proof.
    \end{proof}

    { 
    \begin{remark}
      The assumption of linear independence of $\left\{B_r\right\}_{1\leq r \leq r_*}$ is close but different than  the condition of  Linear Independence Constraint Qualification (LICQ), see \cite[Section 12.2]{nocedal_2006_numerical}, which in our setting says that the matrices $\left\{B_r\mathbf{U}\right\}_{1\leq r \leq r_*}$ are linearly independent for any matrix $\mathbf{U}$ such that $\sum_{i=1}^{r_*}\left< B_r \mathbf U_i, \mathbf U_i\right> = y_r$ for $1\leq r \leq r_*$.  One may prove by contradiction that LICQ implies 
our assumption.
      \end{remark}
      }
      
          \begin{proposition}\label{p:coercive}
        Let $p\in\sP_{\mathbb{K},+}^n[\bX]$ which admits a SOS
        (\ref{e:sos}).  Take a unisolvent set of interpolation points $\left(\mathbf \bx_r\right)_{1\leq r \leq r_*}$ in $\mathbb{K}$ and assume that the corresponding matrices $\left\{B_r\right\}_{1\leq r \leq r_*}$ are linearly independent. Take  $\eps>0$ and set
        $p^\eps = p +  \eps p_B$.  
        Then the function $G^\eps$ built from $\bx_r$ and $y_r^\eps = p^\eps(\bx_r)=y_r+\eps p_B(\mathbf x_r)$ for $1\leq r\leq r_*$ is coercive.
    \end{proposition}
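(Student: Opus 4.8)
The plan is to establish coercivity of $G^\eps$ by splitting an arbitrary direction $\lambda$ into its component inside the asymptotic cone $C_\infty$ and a complementary part, and to show that the linear term $\lla \lambda, \mathbf y^\eps\rra$ dominates along $C_\infty$ while the trace-barrier term $\mathrm{tr}(M(\lambda)^{-1})$ blows up (or at least stays bounded below) otherwise. More precisely, fix a unit vector $\mathbf d$ and consider $G^\eps(t\mathbf d)$ as $t\to+\infty$. By the dichotomy recalled in Section~\ref{s:boundedness}, if the ray $R_{\mathbf d}$ hits $\partial\mathcal D$ at some finite $t_{\mathbf d}$ then $G^\eps(t\mathbf d)\to+\infty$ as $t\to t_{\mathbf d}^-$, so the only directions that matter are $\mathbf d\in C_\infty$. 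For such directions $M(t\mathbf d)=I+t\sum_r d_r B_r\succeq I$, hence $0\leq \mathrm{tr}(M(t\mathbf d)^{-1})\leq r_*$ is bounded, and therefore
\begin{equation*}
G^\eps(t\mathbf d)\geq t\,\lla \mathbf d,\mathbf y^\eps\rra = t\Big(\lla \mathbf d,\mathbf y\rra + \eps \sum_{r=1}^{r_*} d_r p_B(\bx_r)\Big).
\end{equation*}

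Now I would combine two facts. First, since $p$ admits a SOS of the form \eqref{e:sos}, Proposition~\ref{p:Farkas} gives $\lla \mathbf d,\mathbf y\rra\geq 0$ for every $\mathbf d\in C_\infty$. Second, by Lemma~\ref{l:specialp}, the linear independence of the $B_r$ yields a constant $c_*>0$ with $\sum_r d_r p_B(\bx_r)\geq c_*\|\mathbf d\| = c_*$ for all $\mathbf d\in C_\infty$ with $\|\mathbf d\|=1$. Putting these together,
\begin{equation*}
G^\eps(t\mathbf d)\geq t\big(0 + \eps c_*\big) = \eps c_* t \xrightarrow[t\to+\infty]{} +\infty,
\end{equation*}
uniformly over unit directions $\mathbf d\in C_\infty$. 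To upgrade this ``coercivity along every ray'' into genuine coercivity $\lim_{\|\lambda\|\to\infty}G^\eps(\lambda)=+\infty$, I would invoke convexity: $G^\eps$ is a closed proper convex function (Lemma~\ref{p:closed}, applied with data $\mathbf y^\eps$), and for such functions coercivity is equivalent to the condition that the recession function is positive on every nonzero direction, which is exactly what the ray estimate establishes — or, more elementarily, one argues that if $G^\eps$ were not coercive there would be a sequence $\lambda_k$ with $\|\lambda_k\|\to\infty$ and $G^\eps(\lambda_k)$ bounded; passing to a subsequence with $\lambda_k/\|\lambda_k\|\to\mathbf d$, closedness and convexity force $G^\eps$ to be bounded along the ray $R_{\mathbf d}$, contradicting the above (note $\mathbf d$ must lie in $C_\infty$ since otherwise the ray leaves $\mathcal D$ and $G^\eps\equiv+\infty$ past $t_{\mathbf d}$).

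The main obstacle is precisely this last passage from a directional (ray-by-ray) lower bound to a global coercivity statement: one needs the lower bound $\eps c_* t$ to be uniform in the direction $\mathbf d\in C_\infty$, which is why it is important that Lemma~\ref{l:specialp} gives a single constant $c_*$ valid for the whole cone, and one must handle carefully the boundary directions (rays meeting $\partial\mathcal D$) and the interplay with the fact that $\mathcal D$ is open and $G^\eps=+\infty$ outside. A clean way to package this is to use the standard characterization \cite{hiriart_1993_convex} that a closed convex function is coercive if and only if its recession function $(G^\eps)_\infty$ satisfies $(G^\eps)_\infty(\mathbf d)>0$ for all $\mathbf d\neq 0$; here $(G^\eps)_\infty(\mathbf d)=+\infty$ if $\mathbf d\notin C_\infty$ and $(G^\eps)_\infty(\mathbf d)=\lla\mathbf d,\mathbf y^\eps\rra\geq \eps c_*\|\mathbf d\|>0$ if $0\neq\mathbf d\in C_\infty$, which finishes the proof. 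The remaining steps — identifying the recession function of $\lambda\mapsto\mathrm{tr}(M(\lambda)^{-1})+\lla\lambda,\mathbf y^\eps\rra$ and verifying $p_B\in\sP^n_{\mathbb K,+}[\bX]$ so that $\mathbf y^\eps$ is the interpolation data of a legitimate element of the cone — are routine given the material already developed.
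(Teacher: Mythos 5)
Your argument is correct and follows essentially the same route as the paper's: you establish the uniform linear lower bound $\lla\lambda,\mathbf y^\eps\rra \geq \eps c_*\|\lambda\|$ on $C_\infty$ using Proposition~\ref{p:Farkas} and Lemma~\ref{l:specialp}, then upgrade the ray-by-ray estimate to full coercivity by a compactness-plus-convexity contradiction argument, which is exactly what the paper does. Your alternative packaging via the recession function $(G^\eps)_\infty$ is a cleaner way to state the same globalization step and is a valid substitute, but it is not a genuinely different idea.
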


    \begin{proof}
        The asymptotic cone  $C_\infty$ does not depend on $\mathbf y$ or $\mathbf y^\eps$ and we desire
        to show firstly that $G^\eps$ grows linearly to infinity for directions  in $C_\infty$.
        One has the identity
        $
        \sum_{r = 1}^{r_*}\lambda_r y_r^\eps=
        \sum_{r = 1}^{r_*}\lambda_r y_ r +\varepsilon \sum_{r = 1}^{r_*}\lambda_r p_B(\bx_r) $. 
        Take $\lambda\in C_\infty$:
        proposition \ref{p:Farkas} yields  $ \sum_{r = 1}^{r_*}\lambda_r y_ r \geq 0$ because $p$ is a SOS by assumption;
        then 
        Lemma \ref{l:specialp} shows that for any $\lambda\in C_\infty$
        $
        \sum_{r = 1}^{r_*}\lambda_r y_r \geq 0 + \varepsilon c_* \|\lambda\| $ which yields uniform coercivity in the directions
        in the asymptotic cone.
        
        In order to show coercivity (\ref{eq:coco}) which is a stronger statement, the proof is by contradiction. 
        Assume it does not hold. Then there exists a constant $K\in\RR$ as well as a sequence $\left(t_m,\mathbf d_m\right)_{m\in \mathbb N}$ such that
        $t_m\rightarrow +\infty$, $\|\mathbf d_m\|=1$ and $G(t_m  \mathbf d_m)  \leq K$.
        By convexity, and since $G(0) = r_*$, one has  $G( t  \mathbf d_m)  \leq \max(r_*,K)$ for $t \in [0,  t_m]$.
        Up to the extraction of a sub-sequence there exists $\bd_*$ with $\|\mathbf d_*\|=1$, such that  $G( t  \mathbf d_*)  \leq\max(r_*,K)$  for $t \in \mathbb R^+$.
        In particular the ray with direction $\mathbf d_*$ cannot intersect the boundary $\partial\cD$ so it belongs to the  asymptotic
        cone $C_\infty$. By the first estimate $G(t\mathbf d_*)\geq  \varepsilon c_* t$, so it cannot be bounded which yields 
        the contradiction.
    \end{proof}
    

    \subsection{Strict convexity}\label{s:strict}
    
    Strict convexity, if it holds, yields uniqueness of a critical point.
    This information is additional to Item 1 of Theorem \ref{th:th2}. 
    
    \begin{proposition}\label{p:strictconvex}
        Let $p\in\sP_{\mathbb{K},+}^n[\bX]$ be strictly positive on $\mathbb{K}$. Take a unisolvent set of interpolation points $\left(\mathbf \bx_r\right)_{1\leq r \leq r_*}$ in $\mathbb{K}$ and assume that the corresponding matrices $\left\{B_r\right\}_{1\leq r \leq r_*}$ are linearly independent. Then $G$ is strictly convex over its domain $\mathcal D$.
    \end{proposition}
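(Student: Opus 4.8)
The plan is to prove that the Hessian $\nabla^2 G(\lambda)$ is positive definite at every point $\lambda\in\cD$, which yields strict convexity of $G$ on the convex open set $\cD$. First I would use the expression for the Hessian quadratic form established in the proof of Lemma~\ref{p:derivatives}: for every $\mu\in\RR^{r_*}$,
\[
\lla \nabla^2 G(\lambda)\mu,\mu\rra = 2\sum_{i=1}^{r_*}\lla A_i(\mu,\lambda), M(\lambda)^{-1} A_i(\mu,\lambda)\rra,
\quad\text{where}\quad A_i(\mu,\lambda)=\Big(\sum_{r=1}^{r_*}\mu_r B_r\Big)\bU_i(\lambda),
\]
and $\bU_i(\lambda)$ is the $i$th column of $\bU(\lambda)=M(\lambda)^{-1}$ (recall that the orthogonal matrix $\bV$ may be taken equal to $I$ as in Lemma~\ref{p:closed}). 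Since $\lambda\in\cD$ means $M(\lambda)\succ0$, the matrix $M(\lambda)^{-1}$ is positive definite; hence the sum above is a sum of nonnegative terms, and it vanishes if and only if $A_i(\mu,\lambda)=0$ for every $i=1,\dots,r_*$.

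The second step is purely linear-algebraic. The matrix $M(\lambda)^{-1}$ is invertible, so its columns $\bU_1(\lambda),\dots,\bU_{r_*}(\lambda)$ form a basis of $\RR^{r_*}$. Therefore the condition $\big(\sum_{r}\mu_r B_r\big)\bU_i(\lambda)=0$ for all $i$ forces the symmetric matrix $N(\mu):=\sum_{r=1}^{r_*}\mu_r B_r$ to annihilate a spanning family of vectors, i.e.\ $N(\mu)=0$. The linear independence hypothesis on $\{B_r\}_{1\le r\le r_*}$ then gives $\mu=0$. Consequently $\lla\nabla^2 G(\lambda)\mu,\mu\rra>0$ for every $\mu\ne0$ and every $\lambda\in\cD$, so $G$ is strictly convex on $\cD$.

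I do not expect a real obstacle here, since everything reduces to the Hessian formula of Lemma~\ref{p:derivatives} together with two elementary facts: positive definiteness of $M(\lambda)^{-1}$ on $\cD$ and injectivity of $\lambda\mapsto\sum_r\lambda_r B_r$. The only points deserving a line of care are the identification of the $\bU_i(\lambda)$ with the columns of $M(\lambda)^{-1}$ and the passage from ``$A_i(\mu,\lambda)=0$ for all $i$'' to ``$N(\mu)=0$''; the block-diagonal structure of the $B_r$ causes no trouble, since a block-diagonal invertible matrix still has columns spanning all of $\RR^{r_*}$. I note in passing that strict positivity of $p$ is not genuinely needed in this particular argument — the linear independence of the $B_r$ does all the work — but it is the natural hypothesis under which coercivity is simultaneously available through Proposition~\ref{p:coercive}, which is why it is kept in the statement.
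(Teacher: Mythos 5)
Your proof is correct and follows essentially the same route as the paper: use the Hessian formula from Lemma~\ref{p:derivatives}, note that positive definiteness of $M(\lambda)^{-1}$ forces all $A_i(\mu,\lambda)$ to vanish, conclude $\sum_r\mu_r B_r=0$ because the columns of $M(\lambda)^{-1}$ span $\RR^{r_*}$, and invoke linear independence of the $B_r$ to get $\mu=0$. Your side remark that strict positivity of $p$ plays no role in this particular argument is accurate — the Hessian of $G$ does not depend on $\mathbf{y}$ at all, and the paper's proof likewise never uses that hypothesis.
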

    
    \begin{proof}
        From \eqref{e:derivatives} 
        one has that 
        $
        \lla \nabla^2G(\lambda) \mu, \mu \rra = 2 \sum_{i=1}^{r_*}
        \lla A_i(\mu,\lambda) , M(\lambda)^{-1} A_i(\mu,\lambda)\rra \geq 0
        $ for all $\mu\in\RR^{r_*}$, 
        where $ A_i(\mu,\lambda)=(\sum_{r=1}^{r_*}\mu_r B_r) \mathbf U_i(\lambda)$
        for $ 1\leq i \leq r_*$. Since $M(\lambda)^{-1}$ is positive definite, its columns $\mathbf U_i(\lambda)$ form a basis.
        
        By contradiction, assume now $G$ is not strictly convex. There exists $\mu\neq 0$ such that
        $ \lla \nabla^2G(\lambda) \mu, \mu \rra=0$. So 
        the vectors $A_i(\mu,\lambda)$  vanish for all $i$.  So  $\sum_{r=1}^{r_*}\mu_r B_r = 0$, and  $\mu = 0$ by linear independence of the matrices $(B_r)_{r = 1,\dots, r_*}$. This is a contradiction so
        $\nabla^2G(\lambda)>0$ and 
        $G$ is strictly convex.
    \end{proof}

    The strict convexity of $G$ can be measured with the minimal eigenvalue of its Hessian
    $
    \alpha(\lambda)= \inf_{\mu\neq 0}  \frac{\lla \nabla^2G_\bV(\lambda) \mu, \mu \rra}{\|\mu\|^2} > 0$, 
    for any $\lambda\in\cD$. 
    An important property which motivates the design of one of our numerical methods is the following.
    
    \begin{lemma}\label{l:cubic}
        Under the assumptions of Proposition~\ref{p:strictconvex}, then $\alpha$ has a cubic degeneracy  at infinity in the interior of the asymptotic cone of $\cD$.  For all $\bd\in\RR^{r_*}$ such that $\|\bd\| = 1$ and $\sum_{r=1}^{r_*}d_rB_r  \blue{\succ} 0$, there is $C_\bd>0$ such that
        $
        \alpha(t\bd) \leq C_\bd (1 + t)^{-3} $  for all $t\geq 0$.
    \end{lemma}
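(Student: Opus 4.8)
The plan is to bound the Rayleigh quotient defining $\alpha(t\bd)$ from above by testing it against the single vector $\mu=\bd$, and then to estimate the resulting explicit expression. As a preliminary I would put the Hessian quadratic form into trace form. Since $G=G_\bV$ does not depend on the orthogonal matrix $\bV$ (Lemma~\ref{p:closed}), I may compute with $\bV=I$, so that $\mathbf U_i(\lambda)$ is exactly the $i$th column of $M(\lambda)^{-1}$. Using the expression of the second derivatives in \eqref{e:derivatives} (equivalently, the computation in the proof of Proposition~\ref{p:strictconvex}) together with the symmetry of the $B_r$ and the definition of the trace, this gives, with $B_\mu:=\sum_{r=1}^{r_*}\mu_r B_r$,
\[ \lla\nabla^2 G(\lambda)\mu,\mu\rra \;=\; 2\sum_{i=1}^{r_*}\lla B_\mu\mathbf U_i(\lambda),\,M(\lambda)^{-1}B_\mu\mathbf U_i(\lambda)\rra \;=\; 2\,\mathrm{tr}\!\left(M(\lambda)^{-1}B_\mu M(\lambda)^{-1}B_\mu M(\lambda)^{-1}\right). \]

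Next I would specialize to $\lambda=t\bd$ with $\|\bd\|=1$ and $B_\bd:=\sum_{r=1}^{r_*}d_rB_r\succ0$. Then $M(t\bd)=I+tB_\bd\succ I$ for every $t\ge0$, so $t\bd\in\cD$ and $\alpha(t\bd)$ is well defined. Choosing $\mu=\bd$ (hence $B_\mu=B_\bd$) and using that $B_\bd$ commutes with $M(t\bd)$, I diagonalize both matrices simultaneously: writing $\nu_1,\dots,\nu_{r_*}>0$ for the eigenvalues of $B_\bd$, the matrix $M(t\bd)^{-1}B_\bd M(t\bd)^{-1}B_\bd M(t\bd)^{-1}$ has eigenvalues $\nu_k^2(1+t\nu_k)^{-3}$, whence
\[ \alpha(t\bd)\;\le\;\frac{\lla\nabla^2 G(t\bd)\bd,\bd\rra}{\|\bd\|^2}\;=\;2\sum_{k=1}^{r_*}\frac{\nu_k^2}{(1+t\nu_k)^{3}}. \]
(One reaches the same formula without spectral calculus by substituting $B_\bd=(M(t\bd)-I)/t$ for $t>0$, which turns the trace above into $t^{-2}\,\mathrm{tr}\big((I-M(t\bd)^{-1})^2M(t\bd)^{-1}\big)$.)

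Finally, each $\nu_k$ is a fixed positive number depending only on $\bd$. Setting $\nu_{\min}:=\min_k\nu_k>0$, one has $1+t\nu_k\ge\min(1,\nu_{\min})\,(1+t)$ for all $t\ge0$, so $\nu_k^2(1+t\nu_k)^{-3}\le\nu_k^2\,\min(1,\nu_{\min})^{-3}(1+t)^{-3}$; summing over $k$ yields the claim with $C_\bd=2\big(\sum_{k}\nu_k^2\big)\min(1,\nu_{\min})^{-3}$.

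The only point needing care is the first step: deriving the trace identity and checking that it is insensitive to the auxiliary matrix $\bV$ (which legitimizes the reduction to $\bV=I$); everything else is elementary. It is worth noting that the hypothesis $\sum_r d_rB_r\succ0$ — that $\bd$ lie in the interior of the asymptotic cone of $\cD$ — is exactly what forces $\nu_{\min}>0$ and hence a finite $C_\bd$; on the boundary $\partial C_\infty$ some eigenvalue $\nu_k$ would vanish and the cubic decay rate would be lost.
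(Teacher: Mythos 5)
Your proof is correct and follows essentially the same strategy as the paper: bound $\alpha(t\bd)$ via the Hessian quadratic form and use that each of the three factors of $M(t\bd)^{-1}$ contributes an $O((1+t)^{-1})$ decay, since the smallest eigenvalue of $M(t\bd)=I+tB_\bd$ is $1+t\nu_{\min}$. The paper uses a crude uniform bound $\lla\nabla^2G(\lambda)\mu,\mu\rra\le C\|M(\lambda)^{-1}\|^3\|\mu\|^2$ valid for all $\mu$, whereas you test against the specific direction $\mu=\bd$ and exploit the commutation of $B_\bd$ with $M(t\bd)$ to diagonalize and obtain an explicit constant; this is a sharper and more self-contained rendering of the same estimate.
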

    
    \begin{proof}
        Let $\lambda = t\bd$. For a constant $C$ depending only on the data, one has
        $
        \lla \nabla^2G(\lambda) \mu, \mu \rra\leq C \| M(\lambda)^{-1}\| ^3\|\mu\|^2 $. 
        Under the assumptions the minimal eigenvalue of $M(\lambda)$ is given by $1+e_{\bd}t$ with  $e_\bd$ the minimal eigenvalue of $\sum_{r=1}^{r_*}d_rB_r$. Hence $\| M(\lambda)^{-1}\|=O((1+t)^{-1})$.
        
    \end{proof}

    \section{Univariate polynomials on a segment (Theorem \ref{th:th2} Item 2)}\label{s:univariate}
    
    In this section, we focus on univariate polynomials, namely when $d=1$, over the segment $\mathbb{K}=[0,1]$. 
    This case is interesting because 
    it is central for
    for numerical computation of functions of one variable and also 
    one can easily prove the coercivity and the strict convexity.
    The notation is simplified by using the real variable $x\in \mathbb R$, more adapted to  analytical methods.

    We check that the various assumptions granting coercivity and strict convexity are satisfied. In view of Proposition~\ref{p:Farkas}, Proposition~\ref{p:coercive} and Proposition~\ref{p:strictconvex} of the previous section, it suffices to exhibit  an appropriate choice of functions $(g_j)_j$  and of interpolation points
    such that: any non-negative polynomial admits a (possibly non-explicit) SOS decomposition; and 
    the matrices $\{B_r\}_r$ are linearly independent.
    The first point follows from the \emph{Markov-Luk\'acs} Theorem,  see \cite{szego_1975_orthogonal, despres_2017_polynomials, despres_2017_erratum,krein_1977_markov} for a proof. 
    
    \begin{proposition}[\emph{Markov-Luk\'acs}] Let us consider $p\in \sP^{n}[x]$ and $\mathbb K=[0,1]$. 
        \begin{itemize}
            \item\textbf{Even case:} If $n = 2k$, then $p$ is non-negative on $\mathbb K$ if and only if there are polynomials $a$ and $b$ with degree less or equal to $k$ and $k-1$ respectively such that
            \begin{equation} \label{eq:lukacseven}
                p(x)=a^2(x) + x(1-x) b^2(x).
            \end{equation}
            \item\textbf{Odd case:} If $n = 2k+1$, then $p$ is non-negative on $\mathbb K$ if and only if there are polynomials $a$ and $b$ with degree less or equal to $k$ such that
            \begin{equation} \label{eq:lukacsodd}
                p(x) = x a^2(x)+ (1-x)b^2(x). 
            \end{equation}
        \end{itemize}
        \label{t:lukacs}
    \end{proposition}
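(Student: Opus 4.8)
\emph{Proof sketch.} The ``if'' direction is immediate: on $[0,1]$ one has $x\geq0$, $1-x\geq0$ and hence $x(1-x)\geq0$, so both right-hand sides are nonnegative on $[0,1]$, while the degree restrictions on $a,b$ force $\deg p\leq n$ in either case. So the content is the ``only if'' direction, which I would prove by transporting the problem to the half-line.

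Given $p\in\sP^n[x]$ with $p\geq0$ on $[0,1]$, set $\widehat p(t)=(1+t)^n\,p\!\left(\frac{t}{1+t}\right)$. Expanding $p=\sum_{i=0}^np_ix^i$ shows $\widehat p(t)=\sum_{i=0}^np_i\,t^i(1+t)^{n-i}$, so $\widehat p$ is a polynomial of degree $\leq n$, and $\widehat p\geq0$ on $[0,\infty)$ because $\frac{t}{1+t}\in[0,1)$ there. Inverting the substitution, $p(x)=(1-x)^n\,\widehat p\!\left(\frac{x}{1-x}\right)$ as a polynomial identity. The key intermediate claim, a half-line Markov--Luk\'acs statement, is then: \emph{if $q\in\RR[t]$ has $\deg q\leq m$ and $q\geq0$ on $[0,\infty)$, then $q=A^2+tB^2$ for some $A,B\in\RR[t]$.} Once this is known, the sharp degree bounds come for free: $A^2$ has even degree $2\deg A$ and $tB^2$ has odd degree $2\deg B+1$, so these leading terms cannot cancel, whence $\deg q=\max(2\deg A,\,2\deg B+1)\leq m$ and therefore $\deg A\leq\lfloor m/2\rfloor$, $\deg B\leq\lfloor (m-1)/2\rfloor$.

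To prove the intermediate claim I would factor $q$ over $\mathbb{C}$ and collect its roots into ``atoms'': non-real conjugate pairs give positive-definite real quadratics $t^2+\beta t+\gamma$ with $\gamma>\beta^2/4$; real roots in $(0,\infty)$ must occur with even multiplicity (otherwise $q$ changes sign on $(0,\infty)$) and give perfect squares; the root at $0$ gives a power $t^\mu$; real roots in $(-\infty,0)$ give linear factors $t+c$ with $c>0$; and the leading coefficient is positive (evaluate $q$ away from its positive roots). Each atom lies in $\mathcal M:=\{A^2+tB^2:A,B\in\RR[t]\}$: indeed $t^2+\beta t+\gamma=(t-\sqrt\gamma)^2+t\,(\beta+2\sqrt\gamma)$ with $\beta+2\sqrt\gamma>0$; $(t-\rho)^{2\nu}=((t-\rho)^\nu)^2$; $t^{2\nu}=(t^\nu)^2$ and $t^{2\nu+1}=0^2+t\,(t^\nu)^2$; $t+c=(\sqrt c)^2+t\cdot1^2$; and positive constants are squares. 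Finally $\mathcal M$ is closed under multiplication, via the norm identity $(A_1^2+tB_1^2)(A_2^2+tB_2^2)=(A_1A_2-tB_1B_2)^2+t\,(A_1B_2+A_2B_1)^2$, i.e.\ multiplicativity of the norm on the quadratic ring $\RR[t][\theta]/(\theta^2+t)$. Hence the product $q$ of its atoms lies in $\mathcal M$, proving the claim.

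Transporting back with $m=n$: if $n=2k$ then $\deg A\leq k$ and $\deg B\leq k-1$, and $p(x)=(1-x)^{2k}\,\widehat p(\tfrac{x}{1-x})=a(x)^2+x(1-x)\,b(x)^2$ with $a(x)=(1-x)^kA(\tfrac{x}{1-x})$ (degree $\leq k$) and $b(x)=(1-x)^{k-1}B(\tfrac{x}{1-x})$ (degree $\leq k-1$), which is \eqref{eq:lukacseven}; if $n=2k+1$ then $\deg A,\deg B\leq k$, and the same identity reads $p(x)=x\,a(x)^2+(1-x)\,b(x)^2$ with $a(x)=(1-x)^kB(\tfrac{x}{1-x})$ and $b(x)=(1-x)^kA(\tfrac{x}{1-x})$, both of degree $\leq k$, which is \eqref{eq:lukacsodd}. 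The step I expect to need the most care is precisely this sharp degree count in the half-line claim: the more naive route via Fej\'er--Riesz-type factorisations (after a substitution turning $q$ into a nonnegative cosine polynomial) produces the right shape but with degrees roughly twice too large, so it is essential not to try to control degrees while building the decomposition but to let them fall out at the end from the parity argument, and to keep track of the parity of the exponent $\mu$ at $t=0$, which is what ultimately decides between \eqref{eq:lukacseven} and \eqref{eq:lukacsodd}.
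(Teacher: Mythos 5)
The paper itself does not prove Proposition~\ref{t:lukacs}; it simply cites \cite{szego_1975_orthogonal, despres_2017_polynomials, despres_2017_erratum, krein_1977_markov}, so there is no in-paper argument to compare against. What you have written is a correct, self-contained proof.

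Your route --- push forward to $[0,\infty)$ via the M\"obius substitution $x = t/(1+t)$, prove a half-line representation $q = A^2 + tB^2$ by factoring over $\mathbb{C}$ and using that the set $\mathcal{M} = \{A^2 + tB^2\}$ is multiplicatively closed (the norm form of $\RR[t][\theta]/(\theta^2+t)$), then recover the sharp degree bounds a posteriori from the parity mismatch between $\deg A^2$ and $\deg(tB^2)$, and finally pull back --- is a standard and clean path, and all the individual computations check out: the atom $t^2+\beta t+\gamma = (t-\sqrt{\gamma})^2 + t(\beta+2\sqrt{\gamma})$ with $\beta+2\sqrt{\gamma}>0$ is correct, the Brahmagupta--Fibonacci identity is verified, the positive leading coefficient and even multiplicity of positive real roots are justified, and the pull-backs $a(x),b(x)$ in both parities land with the right degree bounds because $(1-x)^{j}C(x/(1-x))$ is a polynomial of degree $\le j$ whenever $\deg C\le j$. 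The one thing I would tighten is the closing remark: it is not the parity of the exponent $\mu$ at $t=0$ that selects between \eqref{eq:lukacseven} and \eqref{eq:lukacsodd}, but the parity of the ambient degree bound $n$ --- $\mu$ only affects which atom template ($t^{2\nu}$ vs.\ $0^2+t(t^\nu)^2$) you use, and as you correctly observe, the final degree count is insensitive to how the atoms were decomposed. The classical treatments you could have cited (e.g.\ Szeg\H{o} or Krein--Nudelman) typically argue directly on $[-1,1]$ or $[0,1]$ by the same kind of root-pairing, so the main thing your proof buys is that the interval case becomes a corollary of the cleaner half-line statement with the degree bookkeeping entirely deferred to the final parity observation.
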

    
    Now let us precise the setting. 
    One takes  $j_* = 2$ and 
    \begin{equation}\label{eq:weights}
        \left\{
        \begin{array}{llll}
            \text{for }n\text{ is even}: &
            g_1(x) = 1 &\text{and}& g_2(x) = x(1-x), \\[.75em]
            \text{for }n\text{ is odd}: &
            g_1(x) = x &\text{and}& g_2(x) = 1-x.
        \end{array}
        \right.
    \end{equation}
    Concerning the interpolation points, we choose any $r_* = n+1$ distinct points $(x_r)_{r = 1,\dots,n+1}$ on the segment $[0, 1]$.
    The polynomials are represented along monomials so that the matrices $B_r$ have the block structure
    \begin{equation} \label{eq:Br1D}
        B_r=\left(
        \begin{array}{cc}
            g_1(x_r) \mathbf w^r_{1} \otimes   \mathbf w^r_{1} & 0 \\
            0 & g_2(x_r) \mathbf w^r_{2} \otimes   \mathbf w^r_{2}
        \end{array}
        \right)\in \mathbb R^{(n+1)\times (n+1)}
    \end{equation}
    where 
    \[
    \left\{
    \begin{array}{ll}
        \mbox{for }n=2k:&
        \mathbf w_{1}^r =  \left(1, x_r, \dots, x_r^k\right)^t \mbox{ and }
        \mathbf w_{2}^r =  \left(1, x_r, \dots, x_r^{k-1}\right)^t ,
        \\[.75em]
        \mbox{for }   n = 2k+1: &
        \mathbf w_{1}^r =  \mathbf w_{2}^r = \left(1, x_r, \dots, x_r^k\right)^t.
    \end{array}
    \right.
    \]
    With these notations, the equalities  \eqref{eq:lukacseven} and \eqref{eq:lukacsodd} are equivalent to
    $ y_r = \left< B_r \bU, \bU \right> $ for $1\leq r \leq n+1$. In the odd case $n =2k+1$ one has
    $
    \bU=(a_0, \dots, a_k, b_0, \dots b_k)^t\in \mathbb R^{n+1}
    $ with $a(x) = \sum_{l=0}^ka_lx^l$ and $b(x) = \sum_{l=0}^kb_lx^l$.
    In the even case $n=2k$, $\bU=(a_0, \dots, a_k, b_0, \dots b_{k-1})^t\in \mathbb R^{n+1}$.

    \begin{corollary}[of Proposition~\ref{p:Farkas}]\label{cor:Farkas}
        Take  $p\in P_{[0,1],+}^n$ and  set $y_r = p(x_r)$. Then, for all $\lambda\in C_\infty$, one has that 
        $ \lla\lambda, \mathbf y\rra \geq 0$.
    \end{corollary}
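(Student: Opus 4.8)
The plan is to obtain the statement as an immediate consequence of the equivalence in Proposition~\ref{p:Farkas} once the Markov--Luk\'acs representation is plugged in. Indeed, the first bullet of Proposition~\ref{p:Farkas} is precisely the desired inequality $\lla\lambda,\by\rra\ge 0$ for all $\lambda\in C_\infty$, and it is equivalent to the existence of polynomials $q_{ij}\in\sP^{n_j}[x]$, $1\le j\le j_*=2$, $1\le i\le r_*$, with $p=\sum_{j=1}^{2}g_j\sum_{i=1}^{r_*}q_{ij}^2$ for the weights $g_j$ fixed in \eqref{eq:weights}. So the whole task reduces to exhibiting such a decomposition, and for that the Markov--Luk\'acs theorem (Proposition~\ref{t:lukacs}) is tailor-made.

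First I would split according to the parity of $n$. If $n=2k$, Proposition~\ref{t:lukacs} gives $p=a^2+x(1-x)b^2$ with $\deg a\le k$ and $\deg b\le k-1$; since here $g_1=1$ and $g_2=x(1-x)$, one has $n_1=\lfloor n/2\rfloor=k$ and $n_2=\lfloor(n-2)/2\rfloor=k-1$, so $a\in\sP^{n_1}[x]$ and $b\in\sP^{n_2}[x]$ are exactly of the admissible degrees. If $n=2k+1$, Proposition~\ref{t:lukacs} gives $p=xa^2+(1-x)b^2$ with $\deg a,\deg b\le k$; here $g_1=x$, $g_2=1-x$ and $n_1=n_2=\lfloor(n-1)/2\rfloor=k$, again admissible. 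In both cases I set $q_{11}=a$, $q_{12}=b$, and pad with $q_{ij}=0$ for $i\ge 2$; this is legitimate since $r_*=n+1\ge 1$ and adding zero squares does not alter the identity. This realizes the second bullet of Proposition~\ref{p:Farkas} with $i_*=r_*$ (so the Carath\'eodory reduction of Remark~\ref{anc:lemma2.10} is not even needed), and the equivalence then yields $\lla\lambda,\by\rra\ge 0$ for every $\lambda\in C_\infty$.

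There is essentially no hard step here: the only things to check carefully are the degree bookkeeping $n_j=\lfloor(n-\deg g_j)/2\rfloor$ against the degree bounds in Markov--Luk\'acs (they match by the very choice \eqref{eq:weights}), and the trivial zero-padding needed to raise the number of squares from one to $r_*$. Everything of substance is already packaged in Proposition~\ref{p:Farkas} and Proposition~\ref{t:lukacs}.
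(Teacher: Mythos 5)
Your proof is correct and takes essentially the same route as the paper: invoke Markov--Luk\'acs to produce $a,b$, use them (with zero padding) to realize the second bullet of Proposition~\ref{p:Farkas}, and conclude by the equivalence. The extra degree bookkeeping you carry out is accurate and makes explicit what the paper leaves implicit.
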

    
    \begin{proof}
        Indeed the second statement of Proposition \ref{p:Farkas} holds with $i_* = 1$ by taking $p_{11} = a$ and  $p_{12} = b$ with $a,b$ provided by Proposition \ref{t:lukacs}.
    \end{proof}
    
    Let $\lambda\in\RR^{n+1}$. Using the structure (\ref{eq:Br1D})  of the matrices $B_r$, one has
    the Hankel matrices
    \begin{equation}\label{e:hankel}\sum_{r=1}^{n+1}\lambda_r B_r = \left(\begin{array}{cc}
            H_1 & 0 \\
            0 & H_2
        \end{array}
        \right)
    \end{equation}
    where
    \[
    \left\{
    \begin{array}{lll}
        \text{for } n = 2k: &  \ds\lla H_1\mathbf{v}, \mathbf{w} \rra = \sum_{i,j = 0}^{k} s_{i+j+1} v_i w_j ,&  \ds\lla H_2\mathbf{v}, \mathbf{w} \rra = \sum_{i,j = 0}^{k} (s_{i+j} - s_{i+j+1}) v_i w_j,\\[1em]
        \text{for } n = 2k+1 : &
        \ds\lla H_1\mathbf{v}, \mathbf{w} \rra = \sum_{i,j = 0}^{k} s_{i+j+1} v_i w_j ,&  \ds\lla H_2\mathbf{v}, \mathbf{w} \rra = \sum_{i,j = 0}^{k-1} (s_{i+j+1} - s_{i+j+2}) v_i w_j.
    \end{array}
    \right.
    \]
    The $s_i$'s are given by  
    $	s_i = \sum_{r=1}^{n+1}\lambda_r x_r^i$. 
    The linear map $\lambda\mapsto(s_0, \dots,s_n)$ is one to one, since
    $(s_0, \dots,s_n)$ is obtained by multiplying $\lambda$ by a Vandermonde matrix, which is invertible. 
    A direct consequence is the following.
    
    \begin{lemma} \label{lemma:ind}
        The matrices $\left\{B_r\right\}_{1\leq r \leq r_*}$  are linearly independent.
    \end{lemma}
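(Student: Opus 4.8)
The plan is to prove the equivalent statement that the linear map $\Phi:\RR^{n+1}\to\RR^{(n+1)\times(n+1)}$ defined by $\lambda\mapsto\sum_{r=1}^{n+1}\lambda_rB_r$ is injective; this is exactly the asserted linear independence, since a nontrivial kernel element is a nontrivial linear relation among the $B_r$. First I would take $\lambda\in\ker\Phi$ and use the block identity \eqref{e:hankel}: since the two Hankel matrices $H_1$ and $H_2$ occupy disjoint diagonal blocks of $\sum_r\lambda_rB_r$, the vanishing of this sum forces both $H_1=0$ and $H_2=0$ simultaneously.

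Next I would read off the entries of the two vanishing Hankel blocks in terms of the power sums $s_i=\sum_{r=1}^{n+1}\lambda_rx_r^i$, treating the even case $n=2k$ and the odd case $n=2k+1$ of \eqref{eq:weights} separately. On the anti-diagonals, $H_1$ and $H_2$ have entries that are either a single $s_i$ or a difference $s_i-s_{i+1}$ of consecutive power sums, and between the two blocks they involve exactly $s_0,s_1,\dots,s_n$ — this count being forced by $\dim\sP^n[x]=n+1$. Combining the relations coming from $H_1=0$ and $H_2=0$ then yields $s_0=s_1=\dots=s_n=0$.

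Finally, as already observed just before the statement, $\lambda\mapsto(s_0,\dots,s_n)$ is multiplication by the Vandermonde matrix of the $n+1$ distinct nodes $(x_r)_{1\le r\le n+1}$, which is invertible; hence $(s_0,\dots,s_n)=0$ gives $\lambda=0$, so $\ker\Phi=\{0\}$ and the matrices $\{B_r\}_r$ are linearly independent. I do not expect any real difficulty here: the argument is a short chain of implications, and the only point needing care is the elementary bookkeeping of which power sums appear in $H_1$ and in $H_2$ in the even versus odd parametrization, together with the verification that the two blocks jointly exhaust $s_0,\dots,s_n$.
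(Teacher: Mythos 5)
Your proof is correct and follows the same route as the paper's: use the block-Hankel form \eqref{e:hankel} to reduce the vanishing of $\sum_r\lambda_rB_r$ to the vanishing of the power sums $s_0,\dots,s_n$, and then invoke the invertibility of the Vandermonde matrix of the distinct nodes to conclude $\lambda=0$. You merely spell out in more detail the bookkeeping step that the paper compresses into ``the definition of $H_1$ and $H_2$.''
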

    
    \begin{proof}
        Assume  $\sum_{r=0}^n \lambda_r B_r = 0$. Then (\ref{e:hankel}) and the definition of $H_1$ and $H_2$ yields that $s_0=\dots =s_n=0$.   It yields  $\lambda = 0$. So the  $\left\{B_r\right\}_{1\leq r \leq r_*}$  are linearly independent.
    \end{proof}
    
    \begin{proposition}\label{t:main1D}
        For any univariate polynomial $p$ that is strictly positive on $\mathbb{K} = [0,1]$, the associated function $G$ is strictly convex and coercive. As a consequence, it has a unique critical point $\lambda^*\in \mathcal D$
         which defines a sum of squares decomposition $p[\lambda^*] = p$.
    \end{proposition}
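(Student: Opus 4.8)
The plan is to assemble Proposition~\ref{t:main1D} directly from the general machinery of Section~\ref{s:properties} together with the structural lemmas just established for the univariate case on $[0,1]$. First I would fix the setting: given $p$ strictly positive on $\mathbb{K}=[0,1]$, choose the weights $(g_1,g_2)$ as in \eqref{eq:weights} according to the parity of $n$, and pick any $n+1$ distinct interpolation points $(x_r)_{r}$ in $[0,1]$, so that $r_*=n+1$ and the matrices $B_r$ have the block form \eqref{eq:Br1D}. The two hypotheses needed to invoke the general results must be verified in this concrete situation: (i) $p$ admits a SOS decomposition of the prescribed form, and (ii) the matrices $\{B_r\}_{1\le r\le r_*}$ are linearly independent.

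For (i), I would apply the Markov--Luk\'acs Theorem (Proposition~\ref{t:lukacs}): a polynomial non-negative on $[0,1]$ of degree $n$ is $a^2+x(1-x)b^2$ (even case) or $xa^2+(1-x)b^2$ (odd case), which is exactly an SOS of the form \eqref{e:sos} with $i_*=1$ and the chosen $g_j$. Since $p$ is in particular non-negative on $\mathbb{K}$, this gives a (possibly non-explicit) SOS. For (ii), I would cite Lemma~\ref{lemma:ind}, which is already proved via the invertibility of the Vandermonde map $\lambda\mapsto(s_0,\dots,s_n)$. With (i) and (ii) in hand, Proposition~\ref{p:strictconvex} yields that $G$ is strictly convex on $\mathcal{D}$ (here I use that $p$ is \emph{strictly} positive on $\mathbb{K}$, which is part of the hypothesis of that proposition), and the coercivity would follow from Proposition~\ref{p:coercive}.

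There is one subtlety to address: Proposition~\ref{p:coercive} as stated proves coercivity of the \emph{perturbed} function $G^\varepsilon$ associated with $p^\varepsilon=p+\varepsilon p_B$, not of $G$ itself. So the extra step here is to observe that when $p$ is strictly positive on the \emph{compact} set $\mathbb{K}=[0,1]$, no perturbation is needed: the key estimate in the proof of Proposition~\ref{p:coercive} requires only that $\sum_r\lambda_r y_r \ge c\|\lambda\|$ on $C_\infty$ for some $c>0$, and I would recover this directly. Indeed, by Lemma~\ref{l:lagrangeCinf} every $\lambda\in C_\infty$ lies in the cone, and by the Generalized Farkas argument (Proposition~\ref{p:Farkas}, or more simply its Corollary~\ref{cor:Farkas}) one has $\langle\lambda,\mathbf{y}\rangle\ge 0$. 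To upgrade $\ge 0$ to $\ge c_*\|\lambda\|$, I would compare $p$ with $p_B=\mathrm{tr}(B(\mathbf X))$: since $p$ is continuous and strictly positive on the compact $\mathbb{K}$ and $p_B$ is bounded there, there is $\varepsilon_0>0$ with $p-\varepsilon_0 p_B\ge 0$ on $\mathbb{K}$; writing $y_r=(p-\varepsilon_0 p_B)(x_r)+\varepsilon_0 p_B(x_r)$, applying Proposition~\ref{p:Farkas} to the non-negative polynomial $p-\varepsilon_0p_B$ and Lemma~\ref{l:specialp} to the $p_B$-term gives $\langle\lambda,\mathbf y\rangle\ge \varepsilon_0 c_*\|\lambda\|$ on $C_\infty$. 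Then the contradiction argument of Proposition~\ref{p:coercive} (a bounded sublevel ray must lie in $C_\infty$, but along $C_\infty$ the function grows linearly) applies verbatim to $G$ itself.

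The main obstacle, and the part deserving the most care, is precisely this last point: making sure that strict positivity on the compact segment genuinely removes the need for the $\varepsilon$-perturbation, i.e. that the linear-growth lower bound on $C_\infty$ holds for $G$ and not merely $G^\varepsilon$. Everything else is bookkeeping. Once strict convexity and coercivity of $G$ are established, a coercive lower-semicontinuous strictly convex function on $\mathbb{R}^{r_*}$ (extended by $+\infty$ off $\mathcal{D}$, and closed by Lemma~\ref{p:closed}) attains its minimum at a unique point $\lambda^*$, which must lie in the open domain $\mathcal{D}$ since $G\to+\infty$ at $\partial\mathcal{D}$; hence $\nabla G(\lambda^*)=0$, and by Proposition~\ref{prop:main} the associated polynomial satisfies $p[\lambda^*]=p$, giving the claimed SOS decomposition. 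I would conclude by noting uniqueness of the critical point follows from strict convexity.
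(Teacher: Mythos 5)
Your proof is correct and, in fact, slightly more careful than the paper's own one-line proof, which reads: ``Thanks to Corollary~\ref{cor:Farkas} and Lemma~\ref{lemma:ind}, the assumptions of Proposition~\ref{p:coercive} and Proposition~\ref{p:strictconvex} are satisfied which yields the result.'' As you correctly point out, Proposition~\ref{p:coercive} as stated yields coercivity of the perturbed function $G^\varepsilon$, not of $G$, so ``yields the result'' elides a step. Your resolution is the right one and is presumably what the authors had in mind: since $p$ is strictly positive on the compact set $[0,1]$ and $p_B$ is a fixed polynomial, there is $\varepsilon_0>0$ with $p-\varepsilon_0p_B\ge 0$ on $[0,1]$; by Markov--Luk\'acs this nonnegative polynomial admits a SOS with the chosen weights, so $p=(p-\varepsilon_0 p_B)+\varepsilon_0 p_B$ is itself exactly of the form $\tilde p^{\varepsilon_0}$ to which Proposition~\ref{p:coercive} applies, and $G$ (associated with $p$) coincides with $G^{\varepsilon_0}$ (associated with $\tilde p=p-\varepsilon_0 p_B$). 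The remainder of your argument --- verifying the linear independence of the $B_r$ via Lemma~\ref{lemma:ind}, invoking Proposition~\ref{p:strictconvex} for strict convexity, and concluding existence and uniqueness of the critical point via closedness, coercivity, and Proposition~\ref{prop:main} --- matches the paper's structure. One tiny nitpick: to apply Proposition~\ref{p:Farkas} to $p-\varepsilon_0 p_B$ you need its SOS representability, which you should justify by one more invocation of Markov--Luk\'acs (or equivalently by citing Corollary~\ref{cor:Farkas} directly for the nonnegative polynomial $p-\varepsilon_0 p_B$, which already packages this); you do gesture at both, so this is a matter of explicitness rather than a gap.
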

    \begin{proof}
        Thanks to Corollary~\ref{cor:Farkas} and Lemma~\ref{lemma:ind}, the assumptions of Proposition~\ref{p:coercive} and Proposition~\ref{p:strictconvex} are satisfied which yields the result.
    \end{proof}

    \section{Numerical algorithms}\label{s:methods}

    \blue{The numerical methods are based on the minimization of the dual function $G$ either by a descent type algorithm, either by the direct search of a critical point with a Newton type methods. Let us emphasize that as $G$ is a proper strictly convex and coercive function on its domain, its minimization is equivalent to the search of a critical point.} All the methods enter the generic iterative framework
    \begin{equation}
        \lambda^{m+1} = \lambda^{m} - \tau_m H_m^{-1} \nabla G(\lambda^m) ,\qquad\lambda^0 = 0 ,
        \label{e:iterativemeth}
    \end{equation}
    with $H_m$ and $\tau_m$ to be defined.
    {In terms of complexity, the cost of one iteration is essentially the cost of computation $\nabla G(\lambda^m)$ with  formula \eqref{e:derivatives}. Indeed, one needs to compute the inverse of $M^{-1}(\lambda)$ and then do $O(r*)$ matrix multiplications with the matrices $B_r$. It yields a cost in $O(r_*^3) + O(r_*\times r_*^3)$. Observe that the Hessian of $G$ is not more expensive to compute since all the vectors $B_r{\bf U}_i$ and $M(\lambda)^{-1}$ have already been computed. Therefore, one needs to do $O(r*)$ matrix multiplications to get the $M(\lambda)^{-1}B_r{\bf U}_i$ and from these matrices the assembling of the Hessian is no more than $O(r_*^4)$. In the end, the cost of the inversion of $H_m$, whatever how it is defined, is less than the evaluation of the gradient.}

    \subsection{Choices for $H_m$} Let us first explain the various choice for the matrix $H_m$.
    \subsubsection{Forward descent method}\label{s:forward}
    The first method we use is the classical descent method which consists in taking  
    $
    H_m = I  \mbox{  the identity matrix}$. 
    
    \subsubsection{Backward descent method}\label{s:eulerimp}
    
    Given a sequence  of positive time steps $\tau_m$, the following iterative scheme 
    $
    \tilde{\lambda}^{m+1} = \mathrm{arg}\min_{\lambda\in\cD} G(\lambda)  +\frac{1}{2 \tau_m}\|\lambda -  \tilde{\lambda}^m\|^2 $ with initial guess
    $ \tilde{\lambda}^0 = 0$
    is well defined since $G$ is convex. It corresponds exactly to the implicit Euler discretization of the gradient flow with variable time steps. At step $m$ we look for the critical point of the strictly convex objective function by making one step of a Newton method starting at $\lambda^m$, yielding the scheme \eqref{e:iterativemeth} with
    $
    H_m = I + \tau_m \nabla^2G(\lambda_m) $.
    
    \subsubsection{Newton-Raphson method}\label{s:Newt}
    
    A  straightforward method for a direct search of the critical point of $G$ is the classical Newton method
    $    H_m = \nabla^2G(\lambda_m) $, 
    with $\nabla^2 G$ the Hessian of $G$.

    \subsubsection{Modified Newton-Raphson method}\label{s:quasiNewt}
    
    The Hessian of $G$ degenerates at infinity  as showed in Lemma \ref{l:cubic}. 
    In practice, a classical Newton-Raphson method for solving $\nabla G(\lambda) = 0$ can be inaccurate at the first iterations in some cases. Instead one may notice that $\lambda_*$ is a critical point of $G(\lambda)$ if and only if it is a critical point of $(G(\lambda) - C)^2$ where $C$ is a constant which is smaller than the infimum of $G$. One expects the latter function to grow quadratically at infinity thus improving the conditioning of the Hessian. This suggests the modified Newton method \eqref{e:iterativemeth} with
    $    H_m = \alpha_m \nabla G(\lambda_m)\otimes\nabla G(\lambda_m)  + \nabla^2G(\lambda_m) $. 
    Several choices are possible for $\alpha_m$. Following the heuristic one could impose $\alpha_m = (G(\lambda_m)-C)^{-1}$ but $C$ is not known \emph{a priori}. In practice, we found out that the empirical choice 
    $
    \alpha_m = \|\nabla  G(\lambda_m)\|/(\|\nabla G(\lambda_m)\| + \|\nabla G(0)\|)
    $ 
    yields good results. \blue{This choice is motivated by the fact that close to the critical point, the method degenerates back to the classical Newton-Raphson method.}

    \blue{
    \subsection{Choice of the time step $\tau_m$} Now we detail the choice of adaptive time step. A preliminary concern is whether one can ensure that every iterate stays in the domain of $G$. 
    
    \subsubsection{Maximal time step}\label{s:max}
    It is possible to guarantee  the condition $\lambda_{m}\in \mathcal D$ for any $m$ by imposing a simple threshold on the time step. Indeed start from $\lambda_m\in \mathcal D$.  Since $\lambda^{m+1}=\lambda^{m}-\tau_m \mathbf d^m$ for a given direction
    $\mathbf d^m=(d_r^m)$, the condition $\lambda^{m+1}\in \mathcal D$ is satisfied
    provided $M(\lambda^m)- \tau_m \sum _r d_r^m B_r\succeq 0$. A sufficient condition is that $\tau_m\leq\tau_{\max}$ with $\tau_{\max}$ such that
    $
     \mu_{\max}\left(  \sum _r d_r^n B_r \right) \tau_{\max}  \leq  \mu_{\min}(M(\lambda^m))
    $, 
    where $\mu_{\max}(A)$ and $\mu_{\min}(A)$ denote respectively  the maximum and minimum of the absolute values of the eigenvalue of a real symmetric square matrix $A$ (\emph{i.e.} the spectral radius and, if $A$ is positive definite, the spectral gap). This condition is very much like a CFL stability condition. In various test cases we observed that $\tau_{\max}$ is of the order of $1$ initially and tends to increase as iterates get closer to the solution.
    
    \subsubsection{Empirical adaptive time step}\label{sec:adaptive}
     The first choice of time step relies on a criteria of decay of $\|\nabla G(\lambda_m)\|$. In the case of descent methods, it differs from the more usual Wolfe condition \cite{nocedal_2006_numerical} which enforces a decay of $G(\lambda_m)$ and it is fairly close to the so-called strong Wolfe condition. We make this choice because it is well adapted to our particular setting. Indeed we recall that $\|\nabla G(\lambda_m)\|$ actually measures the Euclidean norm between the current sum of squares $(p[\lambda^m](x_r))_r$ and $\by$. By equivalence of norms and unisolvance one has $\| p -p[\lambda^m]\|_{P^n[\mathbf{X}]}\leq c_n \|\nabla G (\lambda^m)\|$, for some constant $c_n>0$ depending only on $n$, whatever the choice of norm of the space of polynomials. It is thus the natural measure of the error which has to be decreased by the iterative algorithm.
    
      The adaptive time step $\tau_m$ is defined as follows. We choose \emph{a priori} $0<\tau_{\min}\leq\tau_0\leq\tau_{\max}$. Then we define $\lambda_{m}^{(k)} = \lambda_{n} - 2^{-k} \tau_m^{(k)} H_m^{-1} \nabla G(\lambda_m)$ and denote by $k_{m}$ the smallest integer such that $\|\nabla G(\lambda_m^{(k)})\| < \|\nabla G(\lambda_{m+1})\|$. From there we define $\tau_{m+1}=\max(2^{-k_m}\tau_m,\tau_{\min}) \mbox{ for }k_m>0 $ and $ 
    \tau_{m+1}=  \min(2\tau_m,\tau_{\max}) \mbox{ for } k_m=0$.

    In the case of Newton methods, we take $\tau_0=\tau_{\max}=1$ in order to achieve the expected quadratic rate of convergence. As we shall see in the numerical results, the decrease of the time step in Newton methods coincides with a bad conditioning of the Hessian matrix. 
    }
    \blue{
    \subsubsection{Barzilai and Borwein time step}\label{s:bb}
    In the case of the forward descent method of Section~\ref{s:forward}, there is a particular choice of time step relying on the two previous iterates due to Barzilai and Borwein in their seminal paper \cite{barzilai_1988_two} (see  \cite{barzilai_1997_raydan} for an improvement of the original method which ensures global convergence, two other
    recent works are \cite{sera,gao}). It can be seen as an intermediate between 
    the classical gradient descent method of Cauchy and the Newton method as it generalizes the secant method in higher dimensions. The corresponding time step is given, for $m\geq 1$, by either
    \begin{equation}\label{eq:bb1}
     \tau_m\ =\ \frac{\lla\nabla G(\lambda_m)-\nabla G(\lambda_{m-1}),\lambda_m-\lambda_{m-1}\rra}{\|\nabla G(\lambda_m)-\nabla G(\lambda_{m-1})\|^2}\,,
    \end{equation}
    or
    \begin{equation}\label{eq:bb2}
     \tau_m\ =\ \frac{\|\lambda_m-\lambda_{m-1}\|^2}{\lla\nabla G(\lambda_m)-\nabla G(\lambda_{m-1}),\lambda_m-\lambda_{m-1}\rra}\,.
    \end{equation} 
    It is known that this method does not yield a monotone decay of either $G$ or $\|\nabla G\|$ in general. In our case it may be that $\lambda_{m+1}\notin \mathcal{D}$ with the choices \eqref{eq:bb1} and \eqref{eq:bb2}. Thus the methods are stabilized by replacing $\tau_m$ with $\tau_{\max}$ given in Section~\ref{s:max} whenever $\tau_m>\tau_{\max}$.
    }
    
    \section{Numerical experiments}\label{s:numerics}
    
    In this section, we perform various numerical experiments in order
    to illustrate the theoretical results 
    and to explore the behavior of the numerical algorithms.
    The implementation has been performed with Matlab and Python, with no noticeable difficulties.
    The maximal time step of Section \ref{s:max} is calculated
    with built-in subroutines, the extra-cost is negligible.
     \blue{In the following, we denote by ``Gradient descent'', ``BB1''and ``BB2'' the methods where $H_m = I$ and the time step is taken as in Section~\ref{sec:adaptive} and Section~\ref{s:bb}  with \eqref{eq:bb1} and \eqref{eq:bb2} respectively. The methods ``Implicit Euler'', ``Newton'' and ``Modified Newton'' correspond respectively to the choices of Section~\ref{s:eulerimp},  Section~\ref{s:Newt} and Section~\ref{s:quasiNewt} and the time step is taken as in Section~\ref{sec:adaptive}} 
    
    \subsection{Univariate polynomials on a segment}
    
    Here we consider   univariate SOS polynomials. We proceed as explained in Section~\ref{s:univariate}, except that the monomial basis is replaced here by the orthogonal basis of shifted Chebychev polynomials $(T_i(x))_{i = 1,\dots,k}$ satisfying
    $
    T_i(\cos(\theta)+1)/2) = \cos(i\theta) ,
    $
    for all $\theta\in\RR$. The only modification of the method presented earlier concerns the definition of the $D_r$ matrices which become
    $
    D_r=\mathbf w_r^t \mathbf w_r \in
    \mathbb R^{r_k\times r_k}$ with $\mathbf w_r= \left(T_0(x_r), T_1(x_r), \dots, T_k(x_r)
    \right)^t\in \mathbb R^{r_k}
    $.
    The reason is that  shifted Chebychev polynomials have much better behavior in terms of numerical approximation, since they produce 
    "uniformly distributed" polynomials in $[0,1]$, see \cite{devore} for  comprehensive mathematical treatment. On the opposite,  monomials $x^i$ which concentrate at $x=1$ for $i\rightarrow
    +\infty$ are non optimal for numerical approximation in the segment $[0,1]$. One can refer to  \cite{despres_2017_polynomials} for a comparison between the
    use of Chebychev polynomials and monomials. 
    In the following we propose different test cases to illustrate the properties of the various descent and Newton-Raphson type methods proposed in Section~\ref{s:methods}.
    For univariate polynomials,  the tests 1-2-3 are performed with the odd order option (\ref{eq:weights}) of the weights: similar results are observed with $g_1(x)=1$ and $g_2(x)=x(1-x)$, and so are not reported.
    Test 5 is performed with both the odd and even options.

    \subsubsection{Test case 1} 
    
    \blue{
    \begin{figure}
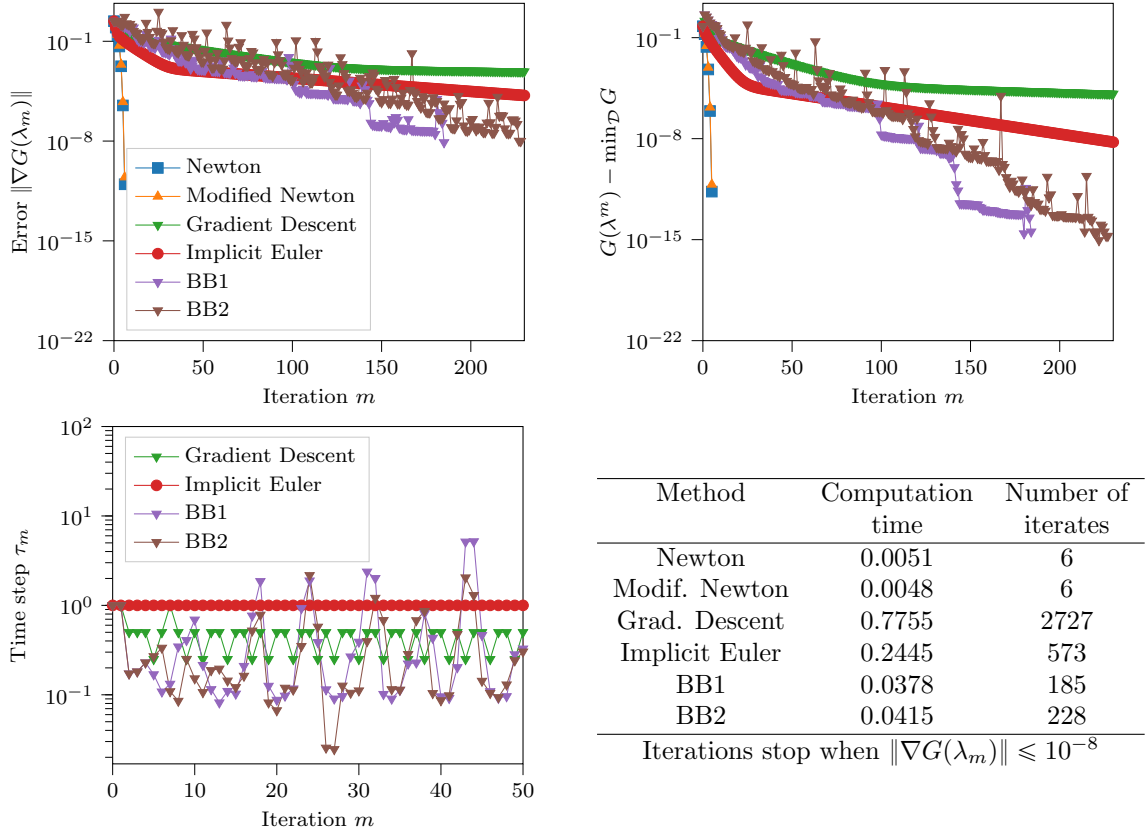

        \centering
        \footnotesize
        \begin{tabular}{cc}
            \begin{minipage}{.47\textwidth}
            \input{1d_test_1_error_new.tex}
            \end{minipage}
            &
            \begin{minipage}{.47\textwidth}
            \input{1d_test_1_G_new.tex}
            \end{minipage}\\
            \begin{minipage}{.47\textwidth}
            \input{1d_test_1_step_new.tex}
            \end{minipage}
            &
            \begin{minipage}{.47\textwidth}
            \centering
            \normalsize
            \blue{
            \begin{tabular}{ccc}\hline
            Method&Computation&Number of\\
            &time& iterates\\\hline
            Newton&0.0051&6\\
            Modif. Newton&0.0048&6\\
            Grad. Descent&0.7755&2727\\
            Implicit Euler&0.2445&573\\
            BB1&0.0378&185\\
            BB2&0.0415&228\\\hline
            \end{tabular}
            Iterations stop when $\|\nabla G(\lambda_m)\|\leq 10^{-8}$}
            \end{minipage}
        \end{tabular}
        \caption{\blue{\normalsize\label{fig:test1}{Test case 1.} Sum of square interpolation of $p(x) = x^{5} + 1$. (Top left) Error $\|\nabla G(\lambda_m)\|$ vs. iteration $m$; (Top right) Objective function $G(\lambda_m)$ vs. iteration $m$; (Bottom left) Step size $\tau_m$ vs. iteration $m$; (Bottom right) Number of iterates and computation time to reach $\|\nabla G(\lambda)\| < 10^{-8}$ for each method.}
        }
    \end{figure}
    }
    
    We compare the convergence of the methods for an easy objective polynomial, that is a polynomial with low degree and far above $0$: we take $n=5$, $r_*=i_*=n+1=6$,  $p(x) = x^{5} + 1$ and the weights $g_1(x)=x$ with $g_2(x)=1-x$ (so $j_*=2$).
    \newline 
    We observe on Figure~\ref{fig:test1} that the Newton type methods both reach the threshold precision of $10^{-8}$ after only $6$ iterations. The implicit Euler and gradient descent methods need respectively $573$ and $2727$ iterations to reach the same error:
    this low convergence  has been observed for many other test cases. This is why we continue the tests with the Newton \blue{and Barzilai and Borwein} methods only.

    \subsubsection{Test case 2}
    \blue{
    \begin{figure}
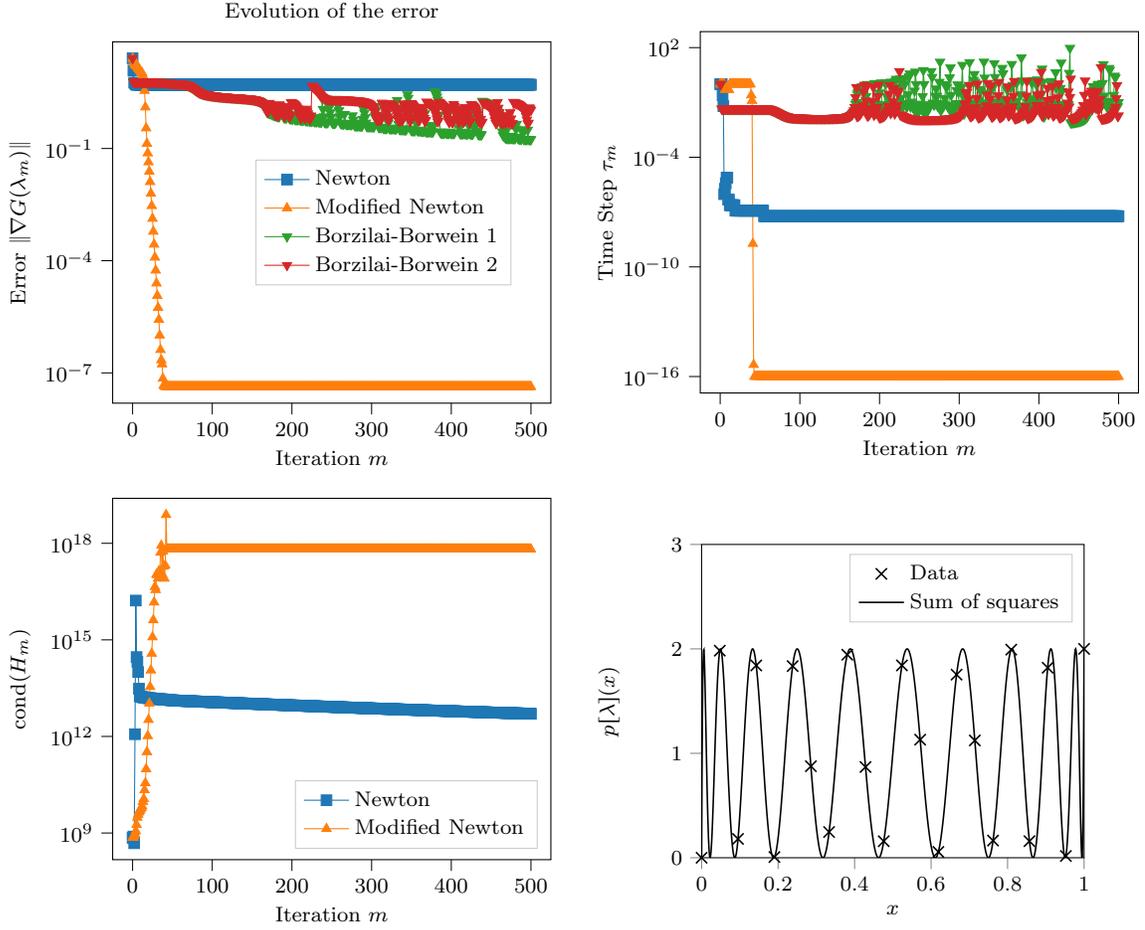

        \centering
        \footnotesize
        \begin{tabular}{cc}
            \begin{minipage}{.47\textwidth}
            \input{1d_test_2_error_new.tex}
            \end{minipage}
            &
            \begin{minipage}{.47\textwidth}
            \input{1d_test_2_step_new.tex}
            \end{minipage}\\[1em]
            \begin{minipage}{.47\textwidth}
            \input{1d_test_2_cond_new.tex}
            \end{minipage}
            &
            \begin{minipage}{.47\textwidth}
            \input{1d_test_2_data_sos.tex}  
            \end{minipage}
        \end{tabular}
        \caption{\blue{\label{fig:test2}{Test case 2.} Sum of square interpolation of $p(x) = T_{21}(x) + 1$. (Top left) error $\|\nabla G(\lambda_m)\|_2$ vs. iteration $m$; (Top right) Step size $\tau_m$ vs. iteration $m$; (Bottom left) Condition number of $H_m$ vs. iteration $m$; (Bottom right) Data $(y_r = p(x_r))_r$ and sum of squares 
            $p[\lambda](x)$ satisfying $\|\nabla G(\lambda)\| < 10^{-6}$.}}
    \end{figure}
    }
    
    In this second test case, we illustrate the better performance of the modified Newton-Raphson method compared to the other methods. We choose a highly oscillating objective polynomial with lower bound equal to $0$. It is given by $n=21$, $r_*=i_*=n+1=22$,  $p(x) = T_{21}(x) + 1$ and the weights $g_1(x)=x$ with $g_2(x)=1-x$ (so $j_*=2$).

    We observe on Figure~\ref{fig:test2} that the modified Newton-Raphson method reaches a precision of around $10^{-8}$ in $40$ iterations. In the case of the standard Newton-Raphson method, the adaptive time step quickly reduces to a very small value in order to keep decreasing the error at each iteration. A similar phenomena happens near convergence for the modified Newton-Raphson method. These behaviors can be interpreted thanks to the evolution of the condition number of the matrix $H_m$ also showed on Figure~\ref{fig:test2}. Let us recall that this matrix needs to be inverted at each iteration. On the first hand, for the Newton-Raphson method,  $H_m$ is the Hessian of $G$ which degenerates when $\lambda$ is far from the minimizer of $G$, as explained in Lemma~\ref{l:cubic}. The modified Newton-Raphson method seems to prevent a bad condition number of the tweaked Hessian in the first few iterations. On the second hand, since the objective polynomial has a  $0$ lower bound, strict convexity and coercivity of $G$ are not granted and it may explain the bad conditioning of $H_m$ near convergence in the case of the modified Newton-Raphson method. Indeed recall that when $\nabla G(\lambda_m)$ is small $H_m$ almost coincides with the Hessian in the modified Newton-Raphson method.

    \blue{Concerning the Barzilai and Borwein methods, we found out that the convergence is very slow on this test case. A minimal error $\|\nabla G(\lambda_m)\|$ of around $10^{-4}$ is attained after $100000$ iterations for the ``BB1'' method, and worse performances are obtained with the ``BB2'' method. While these methods are well-suited for many nonlinear programming problems, it seems that despite the cost of the Hessian inversion, it is significantly cheaper in terms of computational effort to use Newton type iterations on our particular problem.}
    
    Eventually we found in many numerical  experiments that, on the particular problem addressed in this paper, the modified Newton method is by far the most robust and efficient method among the ones we tested. This the reason why  we only use the modified Newton-Raphson method in the following series of tests.
    
    \subsubsection{Test case 3} 
    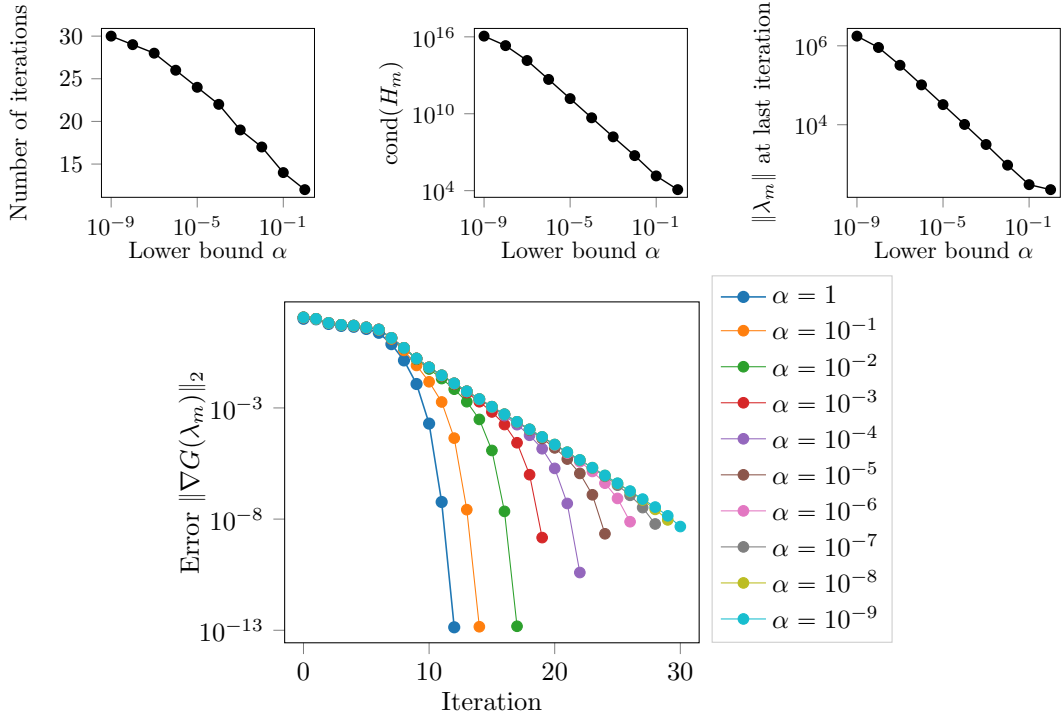
\begin{figure}
        \centering
        \begin{tabular}{c}
            \begin{tabular}{ccc}
                \scalebox{.9}{ 
\begin{tikzpicture}

\begin{axis}[
tick align=outside,
tick pos=left,
x grid style={white!69.01960784313725!black},
xlabel={Lower bound $\alpha$},
xmin=3.54813389233578e-10, xmax=2.81838293126445,
xmode=log,
y grid style={white!69.01960784313725!black},
ylabel={Number of iterations},
ymin=11.1, ymax=30.9,
width = .3\textwidth
]
\addplot [semithick, black, mark=*, mark size=2, mark options={solid}, forget plot]
table [row sep=\\]{%
1	12 \\
0.1	14 \\
0.01	17 \\
0.001	19 \\
0.0001	22 \\
1e-05	24 \\
1e-06	26 \\
1e-07	28 \\
1e-08	29 \\
1e-09	30 \\
};
\end{axis}

\end{tikzpicture} }
                &
                \scalebox{.9}{   
\begin{tikzpicture}

\begin{axis}[
tick align=outside,
tick pos=left,
x grid style={white!69.01960784313725!black},
xlabel={Lower bound $\alpha$},
xmin=3.54813389233578e-10, xmax=2.81838293126445,
xmode=log,
y grid style={white!69.01960784313725!black},
ylabel={$\mathrm{cond}(H_m)$},
ymin=3017.78659330827, ymax=4.5252564161319e+16,
ymode=log,
width = .3\textwidth
]
\addplot [semithick, black, mark=*, mark size=2, mark options={solid}, forget plot]
table [row sep=\\]{%
1	11983.8276710506 \\
0.1	140971.321494832 \\
0.01	5414985.08858649 \\
0.001	157675794.10517 \\
0.0001	4843089091.80387 \\
1e-05	151668671050.901 \\
1e-06	4763388625250.06 \\
1e-07	142601564903223 \\
1e-08	2.06424878147129e+15 \\
1e-09	1.13955728659839e+16 \\
};
\end{axis}

\end{tikzpicture} }
                &
                \scalebox{.9}{    
\begin{tikzpicture}

\begin{axis}[
tick align=outside,
tick pos=left,
x grid style={white!69.01960784313725!black},
xlabel={Lower bound $\alpha$},
xmin=3.54813389233578e-10, xmax=2.81838293126445,
xmode=log,
y grid style={white!69.01960784313725!black},
ylabel={$\|\lambda_m\|$ at last iteration},
ymin=144.789408826456, ymax=2755296.92767952,
ymode=log,
width = .3\textwidth
]
\addplot [semithick, black, mark=*, mark size=2, mark options={solid}, forget plot]
table [row sep=\\]{%
1	226.598178076442 \\
0.1	303.27612587429 \\
0.01	945.277277096854 \\
0.001	3155.97601322805 \\
0.0001	10171.1273029735 \\
1e-05	32358.7371983485 \\
1e-06	102436.906940006 \\
1e-07	321405.643918383 \\
1e-08	906636.092557093 \\
1e-09	1760551.7250253 \\
};
\end{axis}

\end{tikzpicture} }
            \end{tabular}\\[1em]
\begin{tikzpicture}

\definecolor{color0}{rgb}{0.12156862745098,0.466666666666667,0.705882352941177}
\definecolor{color1}{rgb}{1,0.498039215686275,0.0549019607843137}
\definecolor{color2}{rgb}{0.172549019607843,0.627450980392157,0.172549019607843}
\definecolor{color3}{rgb}{0.83921568627451,0.152941176470588,0.156862745098039}
\definecolor{color4}{rgb}{0.580392156862745,0.403921568627451,0.741176470588235}
\definecolor{color5}{rgb}{0.549019607843137,0.337254901960784,0.294117647058824}
\definecolor{color6}{rgb}{0.890196078431372,0.466666666666667,0.76078431372549}
\definecolor{color7}{rgb}{0.737254901960784,0.741176470588235,0.133333333333333}
\definecolor{color8}{rgb}{0.0901960784313725,0.745098039215686,0.811764705882353}

\begin{axis}[
legend cell align={left},
legend entries={{$\alpha = 1$},{$\alpha = 10^{-1}$},{$\alpha = 10^{-2}$},{$\alpha = 10^{-3}$},{$\alpha = 10^{-4}$},{$\alpha = 10^{-5}$},{$\alpha = 10^{-6}$},{$\alpha = 10^{-7}$},{$\alpha = 10^{-8}$},{$\alpha = 10^{-9}$}},
legend style={at={(1.03, 0)}, anchor=south west, draw=white!80.0!black},
tick align=outside,
tick pos=left,
x grid style={white!69.01960784313725!black},
xlabel={Iteration},
xmin=-1.5, xmax=31.5,
y grid style={white!69.01960784313725!black},
ylabel={Error $\|\nabla G(\lambda_m)\|_2$},
ymin=2.73433487057978e-14, ymax=58.8076811469988,
ymode=log,
width = .45\textwidth
]
\addplot [semithick, color0, mark=*, mark size=2, mark options={solid}]
table [row sep=\\]{%
0	10.1949229289356 \\
1	9.67463285201791 \\
2	5.98833587419192 \\
3	4.87737386039389 \\
4	4.45432471191695 \\
5	3.55808092039398 \\
6	2.39422908563428 \\
7	0.739382842266761 \\
8	0.137236420696068 \\
9	0.0119146467391738 \\
10	0.000197376640930422 \\
11	5.88564527664591e-08 \\
12	1.36077794740119e-13 \\
};
\addplot [color1, mark=*, mark size=2, mark options={solid}]
table [row sep=\\]{%
0	11.6531509038885 \\
1	10.084558617598 \\
2	6.5500421253589 \\
3	5.36999840759554 \\
4	4.96228969054074 \\
5	4.09650825383296 \\
6	3.20029597162939 \\
7	1.26859741130057 \\
8	0.38640705944634 \\
9	0.081563125925774 \\
10	0.015003449849353 \\
11	0.00185504737134548 \\
12	4.38308455736975e-05 \\
13	2.65698355255212e-08 \\
14	1.44119089311631e-13 \\
};
\addplot [color2, mark=*, mark size=2, mark options={solid}]
table [row sep=\\]{%
0	11.8003886899959 \\
1	10.1368206361775 \\
2	6.61870747006208 \\
3	5.43269981195105 \\
4	5.03055567693833 \\
5	4.17688021171034 \\
6	3.3369824462664 \\
7	1.39174899024913 \\
8	0.489420933170265 \\
9	0.155594710272631 \\
10	0.0563300581263342 \\
11	0.0209195805792124 \\
12	0.00694873544182277 \\
13	0.00190457392072222 \\
14	0.000304579149567335 \\
15	1.22267286630805e-05 \\
16	2.22663023853651e-08 \\
17	1.51344733324659e-13 \\
};
\addplot [color3, mark=*, mark size=2, mark options={solid}]
table [row sep=\\]{%
0	11.8151246696597 \\
1	10.1421530308999 \\
2	6.62569054780008 \\
3	5.43910480071508 \\
4	5.03755825502935 \\
5	4.18520836791105 \\
6	3.35127523066965 \\
7	1.40531073014416 \\
8	0.501875209730735 \\
9	0.166640362380701 \\
10	0.0657024325461015 \\
11	0.028224311057801 \\
12	0.011899808778143 \\
13	0.00487191036596532 \\
14	0.00189691781695979 \\
15	0.000659182474980718 \\
16	0.000178685863417003 \\
17	2.71780838914374e-05 \\
18	9.9304637175351e-07 \\
19	1.47202653968575e-09 \\
};
\addplot [color4, mark=*, mark size=2, mark options={solid}]
table [row sep=\\]{%
0	11.8165983879483 \\
1	10.1426873258464 \\
2	6.62639001148655 \\
3	5.43974664604429 \\
4	5.03826026146556 \\
5	4.18604409673387 \\
6	3.35271067168767 \\
7	1.40668007074667 \\
8	0.503144176537508 \\
9	0.167789141422784 \\
10	0.0667287464189369 \\
11	0.0291220922689747 \\
12	0.0126658091211486 \\
13	0.00551618167677267 \\
14	0.00242555997290428 \\
15	0.00106339434872046 \\
16	0.000451318967357155 \\
17	0.00017650506636656 \\
18	5.88552321690659e-05 \\
19	1.46012408829587e-05 \\
20	1.90922534964167e-06 \\
21	5.04019885321872e-08 \\
22	3.94970232234965e-11 \\
};
\addplot [color5, mark=*, mark size=2, mark options={solid}]
table [row sep=\\]{%
0	11.8167457609787 \\
1	10.1427407658839 \\
2	6.62645996940205 \\
3	5.43981084403714 \\
4	5.03833047958061 \\
5	4.18612769875065 \\
6	3.35285427729572 \\
7	1.40681713717131 \\
8	0.503271310546687 \\
9	0.167904468672976 \\
10	0.0668323058757353 \\
11	0.0292137518141217 \\
12	0.0127461546103565 \\
13	0.0055879257187261 \\
14	0.00249206585772666 \\
15	0.00112679659478832 \\
16	0.000511425981412238 \\
17	0.000230273981436803 \\
18	0.000101241717481714 \\
19	4.22942785723121e-05 \\
20	1.59688553579993e-05 \\
21	5.01644057754927e-06 \\
22	1.13265893265726e-06 \\
23	1.24084281522852e-07 \\
24	2.18505682493516e-09 \\
};
\addplot [color6, mark=*, mark size=2, mark options={solid}]
table [row sep=\\]{%
0	11.8167604982938 \\
1	10.1427461099933 \\
2	6.62646696530873 \\
3	5.43981726397081 \\
4	5.03833750157017 \\
5	4.18613605924307 \\
6	3.35286863848 \\
7	1.40683084514021 \\
8	0.503284026323158 \\
9	0.167916005898925 \\
10	0.0668426711374848 \\
11	0.0292229367718149 \\
12	0.0127542276195229 \\
13	0.00559517840600272 \\
14	0.00249887724688294 \\
15	0.00113347143224056 \\
16	0.000518134679411495 \\
17	0.000237056009700003 \\
18	0.000108006141047801 \\
19	4.87820102942977e-05 \\
20	2.16942262894475e-05 \\
21	9.37053058753504e-06 \\
22	3.81980297334185e-06 \\
23	1.39055383037196e-06 \\
24	4.13038694409014e-07 \\
25	8.50750577570783e-08 \\
26	7.62409438416849e-09 \\
};
\addplot [white!49.80392156862745!black, mark=*, mark size=2, mark options={solid}]
table [row sep=\\]{%
0	11.8167619720254 \\
1	10.1427466444047 \\
2	6.62646766490069 \\
3	5.43981790596651 \\
4	5.03833820377231 \\
5	4.18613689529573 \\
6	3.3528700746002 \\
7	1.40683221594967 \\
8	0.503285297924141 \\
9	0.167917159666522 \\
10	0.0668437077564224 \\
11	0.0292238554575317 \\
12	0.0127550353054999 \\
13	0.00559590446258309 \\
14	0.00249956002121203 \\
15	0.00113414236811883 \\
16	0.000518813026660576 \\
17	0.000237750674353264 \\
18	0.000108718887437457 \\
19	4.95095438388823e-05 \\
20	2.24279700129813e-05 \\
21	1.00937070471798e-05 \\
22	4.50019632916821e-06 \\
23	1.97409564505535e-06 \\
24	8.39153186750794e-07 \\
25	3.34198900355934e-07 \\
26	1.17413210832019e-07 \\
27	3.28999729951426e-08 \\
28	6.0357030386734e-09 \\
};
\addplot [color7, mark=*, mark size=2, mark options={solid}]
table [row sep=\\]{%
0	11.8167621193986 \\
1	10.1427466978469 \\
2	6.62646773485942 \\
3	5.43981797016356 \\
4	5.03833827398716 \\
5	4.18613697889993 \\
6	3.35287021821577 \\
7	1.40683235303113 \\
8	0.503285425085018 \\
9	0.167917275043554 \\
10	0.0668438114195269 \\
11	0.0292239473280551 \\
12	0.0127551160780978 \\
13	0.00559597707667101 \\
14	0.00249962831339597 \\
15	0.00113420949621918 \\
16	0.000518880934991767 \\
17	0.00023782030789815 \\
18	0.000108790533668395 \\
19	4.9583146484188e-05 \\
20	2.25032224658403e-05 \\
21	1.01701735034221e-05 \\
22	4.57742274754485e-06 \\
23	2.05096861524693e-06 \\
24	9.13557634026743e-07 \\
25	4.02816158658852e-07 \\
26	1.74404819210622e-07 \\
27	7.25197003327126e-08 \\
28	2.74283850080552e-08 \\
29	9.21216798534757e-09 \\
};
\addplot [color8, mark=*, mark size=2, mark options={solid}]
table [row sep=\\]{%
0	11.8167621341359 \\
1	10.1427467031925 \\
2	6.62646774185632 \\
3	5.4398179765848 \\
4	5.03833828101445 \\
5	4.18613698726116 \\
6	3.3528702325783 \\
7	1.40683236673972 \\
8	0.503285437800953 \\
9	0.167917286581398 \\
10	0.0668438217855399 \\
11	0.0292239565154916 \\
12	0.0127551241548895 \\
13	0.00559598433864007 \\
14	0.00249963514361617 \\
15	0.00113421620682827 \\
16	0.000518887734093916 \\
17	0.00023782726087916 \\
18	0.000108797714620225 \\
19	4.95905040980193e-05 \\
20	2.25107791438021e-05 \\
21	1.01779146196403e-05 \\
22	4.58521020257625e-06 \\
23	2.05878646318277e-06 \\
24	9.21519577789729e-07 \\
25	4.1083770122275e-07 \\
26	1.82060557130721e-07 \\
27	7.94447067472851e-08 \\
28	3.47880297276285e-08 \\
29	1.40362018156648e-08 \\
30	4.62545669438943e-09 \\
};
\end{axis}

\end{tikzpicture}
        \end{tabular}
        \caption{\label{fig:test3}{Test case 3.} Influence of the lower bound  $\alpha$ in the sum of square interpolation of $p(x) = (T_{11}(x) + 1) + \alpha$. (Top left) Number of iterations to converge vs. $\alpha$; (Top right) Condition number of $H_m$ at the last iteration vs. $\alpha$; (Bottom) Error $\|\nabla G(\lambda_m)\|_2$ vs. iteration $m$ for different lower bounds $\alpha$.}
    \end{figure} 
    
    Now, we  illustrate the influence of the lower bound of $p$ on the convergence of the method. To proceed, we compute a sum of squares approximation of the polynomial $p(x) = T_{11}(x) + 1 + \alpha$ for various lower bounds $\alpha$
    ($n=5$, $r_*=i_*=n+1=6$, $j_*=2$). 
    \newline
    The results are displayed on Figure~\ref{fig:test3}. We observe that the number of iterations  required to reach a precision of $10^{-8}$ seems to increase proportionally with $ \left| \log(\alpha) \right|$. The condition number of $H_m$ and the norm of $\lambda_m$ at convergence decays like some negative power of $\alpha$. Interestingly enough, one also sees that the quadratic convergence of the (modified) Newton method seems to degenerate to linear convergence when $\alpha$ goes to $0$. All these behaviors can be interpreted thanks to the results of  Lemma~\ref{l:touchzero} and Lemma~\ref{l:cubic}. We know from Lemma~\ref{l:touchzero} that for $\alpha = 0$, $p$ has a root $x_0$ in $[0,1]$, and thus the coercivity of $G$ is lost in some direction of the asymptotic cone of $\mathcal{D}$ (that of the Lagrange vector $L(x_0)$). Thus as $\alpha\to0$, the minimizer $\lambda_\alpha^*$ may go to $+\infty$ in the asymptotic cone which would explain here the explosion of the norm of $\lambda$ and of the condition number of $H_m$ as predicted by Lemma~\ref{l:cubic} and shown on Figure~\ref{fig:test3}. 
    
    \subsubsection{Test case 4}
    
    In this fourth test case we illustrate the influence of the degree $n$ of the objective polynomial $p(x) = x^n + 1$ on the convergence of our method, with $g_1(x)=x$ and $g_2(x)=1-x$ for $n$ odd and  $g_1(x)=1$ and $g_2(x)=x(1-x)$ for $n$ even.
    \newline
    The result are displayed on Figure~\ref{fig:test4}. We observe that the number of iterations required to reach an error of $10^{-8}$  increases 
    with the degree, but weakly. We also 
    observe that the condition number   $\mbox{cond}(H_m)=\|H_m\| \|H_m^{-1}\|$ near convergence deteriorates  with $n$, approximately quadratically.

    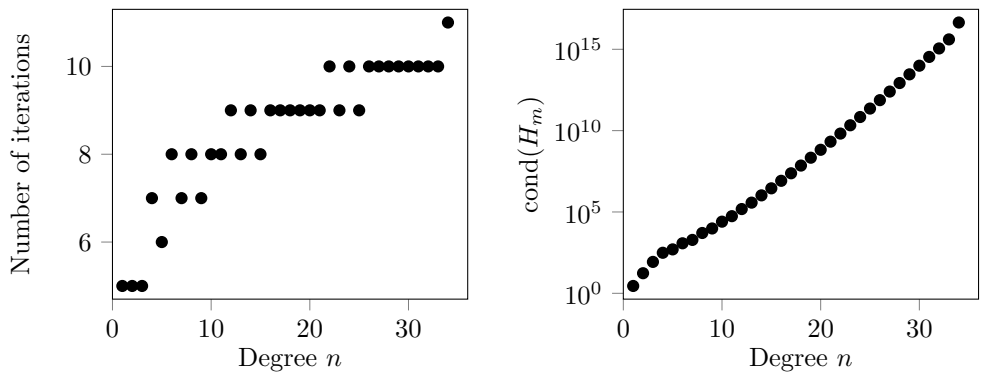
\begin{figure}
        \begin{tabular}{cc}
\begin{tikzpicture}

\begin{axis}[
tick align=outside,
tick pos=left,
x grid style={white!69.01960784313725!black},
xlabel={Degree $n$},
xmin=-0, xmax=36,
y grid style={white!69.01960784313725!black},
ylabel={Number of iterations},
ymin=4.7, ymax=11.3,
width = .4\textwidth
]
\addplot [semithick, black, mark=*, mark size=2, mark options={solid}, forget plot]
table [row sep=\\]{%
1	5 \\
};
\addplot [semithick, black, mark=*, mark size=2, mark options={solid}, forget plot]
table [row sep=\\]{%
2	5 \\
};
\addplot [semithick, black, mark=*, mark size=2, mark options={solid}, forget plot]
table [row sep=\\]{%
3	5 \\
};
\addplot [semithick, black, mark=*, mark size=2, mark options={solid}, forget plot]
table [row sep=\\]{%
4	7 \\
};
\addplot [semithick, black, mark=*, mark size=2, mark options={solid}, forget plot]
table [row sep=\\]{%
5	6 \\
};
\addplot [semithick, black, mark=*, mark size=2, mark options={solid}, forget plot]
table [row sep=\\]{%
6	8 \\
};
\addplot [semithick, black, mark=*, mark size=2, mark options={solid}, forget plot]
table [row sep=\\]{%
7	7 \\
};
\addplot [semithick, black, mark=*, mark size=2, mark options={solid}, forget plot]
table [row sep=\\]{%
8	8 \\
};
\addplot [semithick, black, mark=*, mark size=2, mark options={solid}, forget plot]
table [row sep=\\]{%
9	7 \\
};
\addplot [semithick, black, mark=*, mark size=2, mark options={solid}, forget plot]
table [row sep=\\]{%
10	8 \\
};
\addplot [semithick, black, mark=*, mark size=2, mark options={solid}, forget plot]
table [row sep=\\]{%
11	8 \\
};
\addplot [semithick, black, mark=*, mark size=2, mark options={solid}, forget plot]
table [row sep=\\]{%
12	9 \\
};
\addplot [semithick, black, mark=*, mark size=2, mark options={solid}, forget plot]
table [row sep=\\]{%
13	8 \\
};
\addplot [semithick, black, mark=*, mark size=2, mark options={solid}, forget plot]
table [row sep=\\]{%
14	9 \\
};
\addplot [semithick, black, mark=*, mark size=2, mark options={solid}, forget plot]
table [row sep=\\]{%
15	8 \\
};
\addplot [semithick, black, mark=*, mark size=2, mark options={solid}, forget plot]
table [row sep=\\]{%
16	9 \\
};
\addplot [semithick, black, mark=*, mark size=2, mark options={solid}, forget plot]
table [row sep=\\]{%
17	9 \\
};
\addplot [semithick, black, mark=*, mark size=2, mark options={solid}, forget plot]
table [row sep=\\]{%
18	9 \\
};
\addplot [semithick, black, mark=*, mark size=2, mark options={solid}, forget plot]
table [row sep=\\]{%
19	9 \\
};
\addplot [semithick, black, mark=*, mark size=2, mark options={solid}, forget plot]
table [row sep=\\]{%
20	9 \\
};
\addplot [semithick, black, mark=*, mark size=2, mark options={solid}, forget plot]
table [row sep=\\]{%
21	9 \\
};
\addplot [semithick, black, mark=*, mark size=2, mark options={solid}, forget plot]
table [row sep=\\]{%
22	10 \\
};
\addplot [semithick, black, mark=*, mark size=2, mark options={solid}, forget plot]
table [row sep=\\]{%
23	9 \\
};
\addplot [semithick, black, mark=*, mark size=2, mark options={solid}, forget plot]
table [row sep=\\]{%
24	10 \\
};
\addplot [semithick, black, mark=*, mark size=2, mark options={solid}, forget plot]
table [row sep=\\]{%
25	9 \\
};
\addplot [semithick, black, mark=*, mark size=2, mark options={solid}, forget plot]
table [row sep=\\]{%
26	10 \\
};
\addplot [semithick, black, mark=*, mark size=2, mark options={solid}, forget plot]
table [row sep=\\]{%
27	10 \\
};
\addplot [semithick, black, mark=*, mark size=2, mark options={solid}, forget plot]
table [row sep=\\]{%
28	10 \\
};
\addplot [semithick, black, mark=*, mark size=2, mark options={solid}, forget plot]
table [row sep=\\]{%
29	10 \\
};
\addplot [semithick, black, mark=*, mark size=2, mark options={solid}, forget plot]
table [row sep=\\]{%
30	10 \\
};
\addplot [semithick, black, mark=*, mark size=2, mark options={solid}, forget plot]
table [row sep=\\]{%
31	10 \\
};
\addplot [semithick, black, mark=*, mark size=2, mark options={solid}, forget plot]
table [row sep=\\]{%
32	10 \\
};
\addplot [semithick, black, mark=*, mark size=2, mark options={solid}, forget plot]
table [row sep=\\]{%
33	10 \\
};
\addplot [semithick, black, mark=*, mark size=2, mark options={solid}, forget plot]
table [row sep=\\]{%
34	11 \\
};
\end{axis}

\end{tikzpicture}
            &
\begin{tikzpicture}

\begin{axis}[
tick align=outside,
tick pos=left,
x grid style={white!69.01960784313725!black},
xlabel={Degree $n$},
xmin=0, xmax=36,
y grid style={white!69.01960784313725!black},
ylabel={$\mathrm{cond}(H_m)$},
ymin=0.438305373738238, ymax=2.86208209753563e+17,
ymode=log,
width = .4\textwidth
]
\addplot [semithick, black, mark=*, mark size=2, mark options={solid}, forget plot]
table [row sep=\\]{%
1	2.82842858432658 \\
};
\addplot [semithick, black, mark=*, mark size=2, mark options={solid}, forget plot]
table [row sep=\\]{%
2	17.164143879666 \\
};
\addplot [semithick, black, mark=*, mark size=2, mark options={solid}, forget plot]
table [row sep=\\]{%
3	85.2755656217145 \\
};
\addplot [semithick, black, mark=*, mark size=2, mark options={solid}, forget plot]
table [row sep=\\]{%
4	310.437958648156 \\
};
\addplot [semithick, black, mark=*, mark size=2, mark options={solid}, forget plot]
table [row sep=\\]{%
5	500.767045328044 \\
};
\addplot [semithick, black, mark=*, mark size=2, mark options={solid}, forget plot]
table [row sep=\\]{%
6	1187.13996783804 \\
};
\addplot [semithick, black, mark=*, mark size=2, mark options={solid}, forget plot]
table [row sep=\\]{%
7	1915.54136727739 \\
};
\addplot [semithick, black, mark=*, mark size=2, mark options={solid}, forget plot]
table [row sep=\\]{%
8	5100.51907451592 \\
};
\addplot [semithick, black, mark=*, mark size=2, mark options={solid}, forget plot]
table [row sep=\\]{%
9	9681.74449855338 \\
};
\addplot [semithick, black, mark=*, mark size=2, mark options={solid}, forget plot]
table [row sep=\\]{%
10	25322.9917834682 \\
};
\addplot [semithick, black, mark=*, mark size=2, mark options={solid}, forget plot]
table [row sep=\\]{%
11	55117.6217818487 \\
};
\addplot [semithick, black, mark=*, mark size=2, mark options={solid}, forget plot]
table [row sep=\\]{%
12	150884.00301723 \\
};
\addplot [semithick, black, mark=*, mark size=2, mark options={solid}, forget plot]
table [row sep=\\]{%
13	374762.068820122 \\
};
\addplot [semithick, black, mark=*, mark size=2, mark options={solid}, forget plot]
table [row sep=\\]{%
14	1048975.54093459 \\
};
\addplot [semithick, black, mark=*, mark size=2, mark options={solid}, forget plot]
table [row sep=\\]{%
15	2839677.70773532 \\
};
\addplot [semithick, black, mark=*, mark size=2, mark options={solid}, forget plot]
table [row sep=\\]{%
16	8234125.85762619 \\
};
\addplot [semithick, black, mark=*, mark size=2, mark options={solid}, forget plot]
table [row sep=\\]{%
17	23883411.8028712 \\
};
\addplot [semithick, black, mark=*, mark size=2, mark options={solid}, forget plot]
table [row sep=\\]{%
18	71207307.9412311 \\
};
\addplot [semithick, black, mark=*, mark size=2, mark options={solid}, forget plot]
table [row sep=\\]{%
19	216447049.745579 \\
};
\addplot [semithick, black, mark=*, mark size=2, mark options={solid}, forget plot]
table [row sep=\\]{%
20	664521815.247751 \\
};
\addplot [semithick, black, mark=*, mark size=2, mark options={solid}, forget plot]
table [row sep=\\]{%
21	2094890107.89472 \\
};
\addplot [semithick, black, mark=*, mark size=2, mark options={solid}, forget plot]
table [row sep=\\]{%
22	6590717329.47658 \\
};
\addplot [semithick, black, mark=*, mark size=2, mark options={solid}, forget plot]
table [row sep=\\]{%
23	21348714150.2699 \\
};
\addplot [semithick, black, mark=*, mark size=2, mark options={solid}, forget plot]
table [row sep=\\]{%
24	68770187983.2239 \\
};
\addplot [semithick, black, mark=*, mark size=2, mark options={solid}, forget plot]
table [row sep=\\]{%
25	227577234835.458 \\
};
\addplot [semithick, black, mark=*, mark size=2, mark options={solid}, forget plot]
table [row sep=\\]{%
26	747489175889.979 \\
};
\addplot [semithick, black, mark=*, mark size=2, mark options={solid}, forget plot]
table [row sep=\\]{%
27	2518515305288.15 \\
};
\addplot [semithick, black, mark=*, mark size=2, mark options={solid}, forget plot]
table [row sep=\\]{%
28	8426054738745.84 \\
};
\addplot [semithick, black, mark=*, mark size=2, mark options={solid}, forget plot]
table [row sep=\\]{%
29	28801222548393.4 \\
};
\addplot [semithick, black, mark=*, mark size=2, mark options={solid}, forget plot]
table [row sep=\\]{%
30	97977245277048.7 \\
};
\addplot [semithick, black, mark=*, mark size=2, mark options={solid}, forget plot]
table [row sep=\\]{%
31	339838609491018 \\
};
\addplot [semithick, black, mark=*, mark size=2, mark options={solid}, forget plot]
table [row sep=\\]{%
32	1.12754918674727e+15 \\
};
\addplot [semithick, black, mark=*, mark size=2, mark options={solid}, forget plot]
table [row sep=\\]{%
33	4.05390118629145e+15 \\
};
\addplot [semithick, black, mark=*, mark size=2, mark options={solid}, forget plot]
table [row sep=\\]{%
34	4.43520465880365e+16 \\
};
\end{axis}

\end{tikzpicture}
        \end{tabular}
        \caption{\label{fig:test4}\textbf{Test case 4.} Influence of the degree  $n$ in the sum of square interpolation of $p(x) = x^n + 1$. (Left) Number of iterations to converge vs. $n$; (Right) Condition number of $H_m$ at the last iteration vs. $n$.}
    \end{figure}

    \subsection{Bivariate polynomials on a triangle}
    
    We use the   minimization algorithm   for
    the computation of a sum of squares representation of some positive polynomial $p\in P_n[X,Y]$ on the triangle.
    
    \subsubsection{Numerical setting}   
    
    The barycentric coordinates corresponding to the vertices $S_1$, $S_2$ and $S_3$ of the triangle 
    are denoted as  $\mu_j$ for  $j = 1,2,3$: 
    $
    \mu_1(x,y) = 1 - x - y$, $ \mu_2(x,y) = x$ and $\mu_3(x,y) = y$. 
    The triangle is 
    $
    \mathbb{K} = \{\mathbf x=(x,y)\in \mathbb R^2\mid\ \mu_1(\mathbf x)\geq 0\,,\ \mu_2(\mathbf x)\geq 0\,,\ \mu_3(\mathbf x)\geq 0\}$. 
    The interpolation points are
    $\mathbf x_r=(x_r, y_r)$  for  $1\leq r \leq r_* = (n+1)(n+2)/2$ are the distinct points of a cartesian grid intersected with
    the triangle. 
    For a given polynomial $p\in \sP^n[\bX]$ of a given degree, 
    the data is    $\mathbf z\in\mathbb{R}^{r_*}$ which is
    the vector with components $z_r = p(x_r, y_r)$.
    An illustration of the geometry is provided in  Figure~\ref{fig:simplex} where 
    the degree is $n=4$. 
    
    \begin{figure}
        \centering
        \begin{tabular}{cc}
            \scalebox{.9}{   
\begin{tikzpicture}

\definecolor{color0}{rgb}{0.12156862745098,0.466666666666667,0.705882352941177}

\begin{axis}[
tick align=outside,
tick pos=left,
title={Domain and interpolation points},
x grid style={white!69.01960784313725!black},
xlabel={$x$},
axis equal,
xmin=-0.2, xmax=1.2,
y grid style={white!69.01960784313725!black},
ylabel={$y$},
ymin=-0.2, ymax=1.2,
width = .5\textwidth
]
\addplot [very thick, black, forget plot]
table [row sep=\\]{%
0	0 \\
1	0 \\
};
\addplot [very thick, black, forget plot]
table [row sep=\\]{%
0	0 \\
0	1 \\
};
\addplot [very thick, black, forget plot]
table [row sep=\\]{%
0	1 \\
1	0 \\
};
\addplot [semithick, color0, mark=*, mark size=3, mark options={solid,fill=black,draw=white}, only marks, forget plot]
table [row sep=\\]{%
0	0 \\
0.25	0 \\
0.5	0 \\
0.75	0 \\
1	0 \\
0	0.25 \\
0.25	0.25 \\
0.5	0.25 \\
0.75	0.25 \\
0	0.5 \\
0.25	0.5 \\
0.5	0.5 \\
0	0.75 \\
0.25	0.75 \\
0	1 \\
};
\node at (axis cs:-0.12,-0.12)[
  scale=1,
  anchor=base west,
  text=black,
  rotate=0.0
]{ $S_1$};
\node at (axis cs:1,-0.12)[
  scale=1,
  anchor=base west,
  text=black,
  rotate=0.0
]{ $S_2$};
\node at (axis cs:-0.1,1.1)[
  scale=1,
  anchor=base west,
  text=black,
  rotate=0.0
]{ $S_3$};
\end{axis}

\end{tikzpicture} }
        \end{tabular}
        \caption{\label{fig:simplex}The simplex $\mathbb{K}$ and interpolation points for $n = 4$.}
    \end{figure}
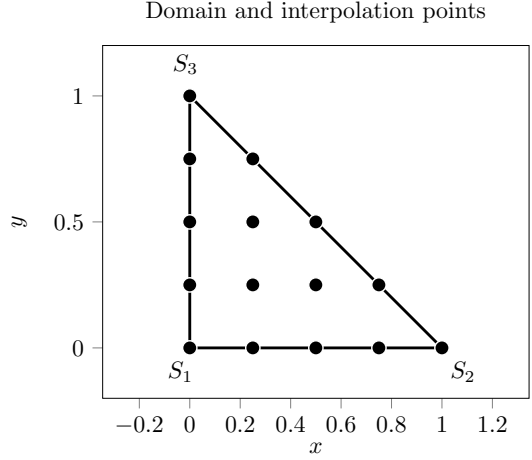
    
    We consider  the   ansatz ($\mathbf x=(x,y)$)
    \begin{equation} \label{e:sos2d}
        p[\lambda](\mathbf  x)\ =\ \sum_{i = 1}^{r_j} g_1(\mathbf x)\,p_{i1}[\lambda](\mathbf x)^2 +  g_2(\mathbf x)\,p_{i2}[\lambda](\mathbf  x)^2 
        + g_3(\mathbf  x)\,p_{i3}[\lambda](\mathbf  x)^2 + g_4(\mathbf  x)\,p_{i4}[\lambda](\mathbf x)^2, 
    \end{equation}
    where,  arbitrarily with respect to the literature \cite{lasserre_2010_moments},  the  weights are
    \begin{equation}\label{e:sos2drab}
        \left\{
        \begin{array}{ll}
            \mbox{for } n=2k+1, &
            g_i = \mu_i $ for $ i = 1,2,3 $ and $ g_4 = \mu_1\mu_2\mu_3, \\
            \mbox{for } n=2k, &
            g_1 = \mu_2\,\mu_3, \ g_2 = \mu_3\,\mu_1, \ g_3 = \mu_1\,\mu_2 $ and $ g_4 = 1.
        \end{array}
        \right.
    \end{equation}
    With this choice we recover in every cases $r_* = r_1 + r_2 + r_3 + r_4$. All polynomials are parametrized on the basis of bivariate monomials
    since  Chebychev polynomials are not available on the triangle.

    \subsubsection{Test case 5}   
    We  approach the polynomial $p(x,y) = (T_{4}(x) + 1)(T_{4}(y) + 1) / 4 + 10^{-3}$ on the 2D simplex with the modified Newton method. 
    The  parameters are $n=8$, $r_*=i_*=45$ and $j_*=4$.

    We observe on Figure~\ref{fig:test5} that our method converges in this multivariate setting and reaches a precision of less than $10^{-8}$ in $210$ iterations. 
    The error decays slowly during the first $200$ iterations before reaching usual quadratic speed of convergence of the Newton method near the minimizer of $G$.
    This result illustrates the ability of our algorithms to provided a computational strategy for  the computation of positive
    polynomials on bi-dimensional sets.
    
    \begin{figure}
        \begin{tabular}{cc}
            \scalebox{.9}{  
\begin{tikzpicture}

\definecolor{color0}{rgb}{0.12156862745098,0.466666666666667,0.705882352941177}

\begin{axis}[
legend cell align={left},
legend entries={{Modified Newton}},
legend style={at={(0.03,0.03)}, anchor=south west, draw=white!80.0!black},
tick align=outside,
tick pos=left,
x grid style={white!69.01960784313725!black},
xlabel={Iteration $m$},
xmin=0, xmax=220,
y grid style={white!69.01960784313725!black},
ylabel={Error},
ymin=3.17781480265495e-11, ymax=48.4849906694354,
ymode=log,
width = .4\textwidth
]
\addplot [semithick, black, mark=triangle*, mark size=2, mark options={solid}]
table [row sep=\\]{%
0	13.5460548102014 \\
1	13.5457930718776 \\
2	13.5455353691784 \\
3	13.5452812801314 \\
4	13.5450307912337 \\
5	13.5447839733317 \\
6	13.5445405577759 \\
7	13.5443004801361 \\
8	13.5440637217092 \\
9	13.5438301954861 \\
10	13.5435996778747 \\
11	13.5433722600293 \\
12	13.54314782788 \\
13	13.5429263331091 \\
14	13.5427077237228 \\
15	13.5424918738972 \\
16	13.5422787032958 \\
17	13.5420682103267 \\
18	13.5418601806759 \\
19	13.5416545942641 \\
20	13.5414514869102 \\
21	13.5412506716166 \\
22	13.5410519816955 \\
23	13.5408554648065 \\
24	13.5406608344564 \\
25	13.5404679084428 \\
26	13.5402766309722 \\
27	13.5400870137267 \\
28	13.5398986556449 \\
29	13.5397116610878 \\
30	13.5395256811867 \\
31	13.5393406380395 \\
32	13.5391564608561 \\
33	13.5389727019106 \\
34	13.5387892029719 \\
35	13.538605881336 \\
36	13.53842238506 \\
37	13.5382386844019 \\
38	13.5380545103271 \\
39	13.5378695055151 \\
40	13.537683519742 \\
41	13.5374961625462 \\
42	13.5373073243293 \\
43	13.5371165253682 \\
44	13.5369236808368 \\
45	13.5367284002749 \\
46	13.5365304166027 \\
47	13.5363293541295 \\
48	13.5361249075929 \\
49	13.5359165592581 \\
50	13.5357042244236 \\
51	13.5354872698082 \\
52	13.5352654083637 \\
53	13.5350386383327 \\
54	13.5348058999624 \\
55	13.5345671395365 \\
56	13.534321711437 \\
57	13.5340692626868 \\
58	13.5338091401595 \\
59	13.5335410109077 \\
60	13.5332643224897 \\
61	13.5329784762552 \\
62	13.5326827613458 \\
63	13.5323767588988 \\
64	13.5320598414097 \\
65	13.5317310447798 \\
66	13.5313898942012 \\
67	13.531035686717 \\
68	13.5306673595858 \\
69	13.5302841773774 \\
70	13.5298853245787 \\
71	13.5294697891563 \\
72	13.5290366971101 \\
73	13.5285849107738 \\
74	13.5281133752554 \\
75	13.5276207739112 \\
76	13.5271060894196 \\
77	13.526567826744 \\
78	13.5260045307531 \\
79	13.5254146656175 \\
80	13.5247965182332 \\
81	13.5241485326026 \\
82	13.5234686704971 \\
83	13.5227549557898 \\
84	13.5220051912605 \\
85	13.5212170680404 \\
86	13.5203879962086 \\
87	13.519515272626 \\
88	13.518595943105 \\
89	13.5176267935031 \\
90	13.5166043351713 \\
91	13.5155248390745 \\
92	13.51438409168 \\
93	13.513177687458 \\
94	13.5119008167828 \\
95	13.510548028847 \\
96	13.5091135709052 \\
97	13.5075909155723 \\
98	13.5059730705669 \\
99	13.5042522203558 \\
100	13.5024198406326 \\
101	13.5004664402306 \\
102	13.498381460883 \\
103	13.4961532001729 \\
104	13.4937687014863 \\
105	13.4912134550201 \\
106	13.4884711990576 \\
107	13.4855237234195 \\
108	13.4823505835768 \\
109	13.4789287131185 \\
110	13.4752320111401 \\
111	13.4712308905124 \\
112	13.4668917887235 \\
113	13.4621763667763 \\
114	13.4570408485039 \\
115	13.4514350485609 \\
116	13.4453012063789 \\
117	13.4385727747359 \\
118	13.4311727783226 \\
119	13.4230120442585 \\
120	13.4139870213473 \\
121	13.4039772994515 \\
122	13.3928428099107 \\
123	13.3804207908027 \\
124	13.3665227615952 \\
125	13.3509321025349 \\
126	13.3334034638963 \\
127	13.3136664170873 \\
128	13.291438772235 \\
129	13.2664598692852 \\
130	13.2385663469337 \\
131	13.2078573433211 \\
132	13.17505051077 \\
133	13.1422486819044 \\
134	13.114593595675 \\
135	13.1037572263701 \\
136	13.0706180989241 \\
137	13.0422842789077 \\
138	13.0225248539671 \\
139	13.0153134308689 \\
140	12.9812332589467 \\
141	12.9465100422643 \\
142	12.9102445340284 \\
143	12.8710800738856 \\
144	12.8273038760399 \\
145	12.8206670674392 \\
146	12.7761836886664 \\
147	12.6803643900895 \\
148	12.6691321601899 \\
149	12.4986781997385 \\
150	12.4449008107458 \\
151	11.9648510140037 \\
152	11.5860599403486 \\
153	11.2849101141779 \\
154	10.9350733279493 \\
155	10.437764013751 \\
156	9.85299499384731 \\
157	9.83944277207862 \\
158	9.32164153982014 \\
159	8.73423020496503 \\
160	8.15736673112223 \\
161	7.57528773941168 \\
162	7.08379726308435 \\
163	6.68982157862273 \\
164	6.35914546534532 \\
165	6.07517499495364 \\
166	5.8352561183057 \\
167	5.61252951048726 \\
168	5.35000929031135 \\
169	5.06665367272299 \\
170	4.84243364773581 \\
171	4.72703308781236 \\
172	4.4248777058418 \\
173	4.18948756816768 \\
174	4.01079796063467 \\
175	3.87516671261542 \\
176	3.76945947870652 \\
177	3.68362096421485 \\
178	3.61043486187959 \\
179	3.54449512174078 \\
180	3.48133672628686 \\
181	3.41683303366355 \\
182	3.34687820639453 \\
183	3.26752796037637 \\
184	3.17595470010773 \\
185	3.07220142567051 \\
186	2.96007193615972 \\
187	2.84469497225161 \\
188	2.72806784215555 \\
189	2.60713408485163 \\
190	2.47538495336446 \\
191	2.32555396105714 \\
192	2.15214891318623 \\
193	1.95381267018989 \\
194	1.73485032704134 \\
195	1.50530393976738 \\
196	1.2795588344956 \\
197	1.07375610064114 \\
198	0.902759744144218 \\
199	0.772193028560252 \\
200	0.659142616855908 \\
201	0.531734591236007 \\
202	0.468144029352018 \\
203	0.116017681049644 \\
204	0.0248014643174052 \\
205	0.00698292366903169 \\
206	0.00177225474816946 \\
207	0.000245136123113443 \\
208	1.22973226150009e-05 \\
209	8.12619575830373e-08 \\
210	1.13742579086485e-10 \\
};
\end{axis}

\end{tikzpicture} }
            &
            \scalebox{.9}{   \includegraphics[width = .6\linewidth]{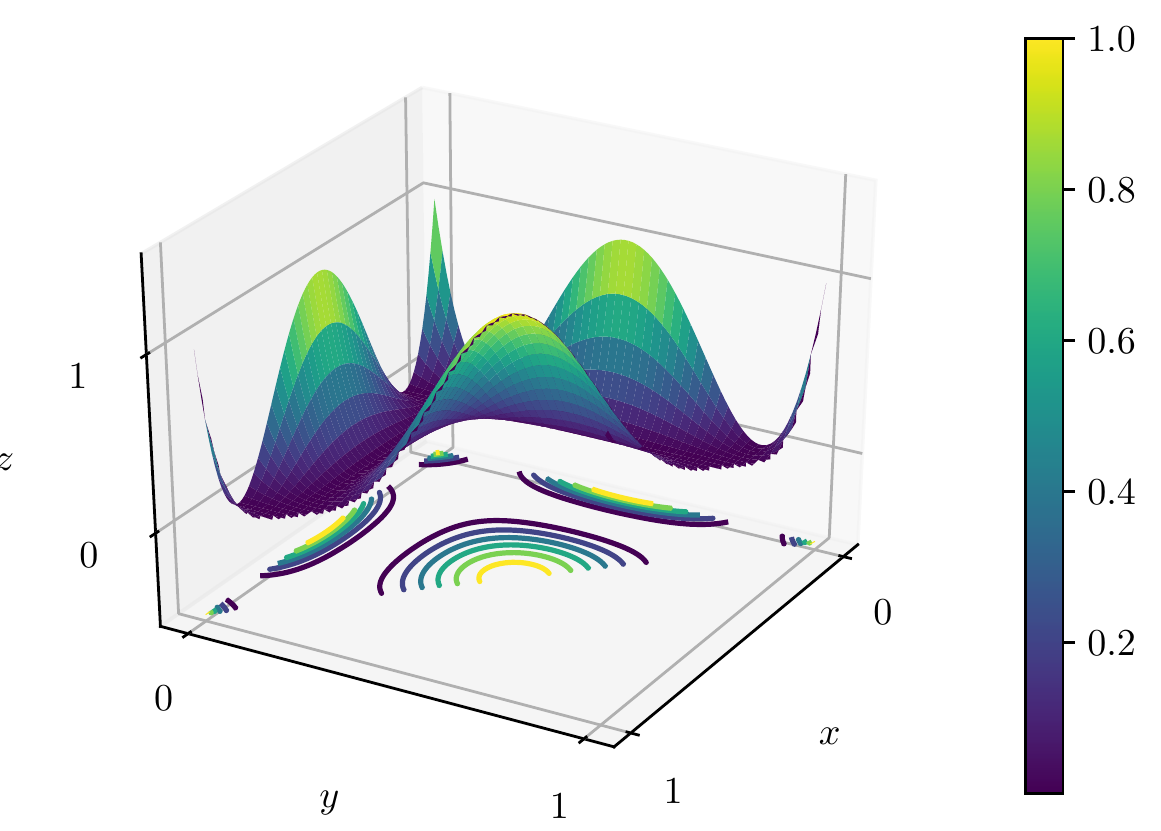} }
        \end{tabular}
        \caption{\label{fig:test5}{Test case 5.} Bivariate sum of square interpolation of the degree $8$ polynomial $p(x,y) = (T_{4}(x) + 1)(T_{4}(y) + 1) / 4 + 10^{-3}$ on the 2D simplex. (Left) error $\|\nabla G(\lambda_m)\|_2$ vs. iteration $m$; (Right) surface plot of the converged sum of square.}
    \end{figure}
    
    \subsubsection{Test case 6} 
    In this last test case we are interested in the SOS approximation of the Motzkin polynomial \cite{motzkin_1967_arithmetic}
    $p(x,y) = x^2y^4 + y^2x^4 - 3x^2y^2 + 1$. 
    
    This polynomial is non-negative over $\mathbb{R}^2$ and  famous for not being a sum of square in the sense that it admits no decomposition \eqref{e:sos} with weigths $\widetilde g_1 = \dots = \widetilde g_{j_*} = 1$ (whatever the choice of $i_*$ or, equivalently in this particular case, $j_*$).
    The parameters are $n=6$, $r_*=i_*=28$ and $j_*=4$.
    We use our method to approach this polynomial with the sum of square ansatz \eqref{e:sos2d} but with
    two different weights: on the one hand we use the weights
    $g_i$ \eqref{e:sos2drab} for which we expect some convergence of the
    algorithm;  on the other other hand we use   the weights  $\widetilde g_i = 1$ for  $i=1,2,3,4$. 
    
    In the latter case  our experiment   on Figure~\ref{fig:test6} show
     the method does not converge 
     (in coherence with  the non-existence of a sum of square decomposition for the Motzkin polynomial). The algorithm with weights $g_i$ converges while the algorithm with weights $\widetilde g_i$ does not converge (bottom right illustration in the Figure).
    
    \begin{figure}
        \begin{tabular}{c}
            \begin{tabular}{cc}
\begin{tikzpicture}

\definecolor{color0}{rgb}{0.12156862745098,0.466666666666667,0.705882352941177}
\definecolor{color1}{rgb}{1,0.498039215686275,0.0549019607843137}

\begin{axis}[
legend cell align={left},
legend entries={{SOS},{Weighted}},
legend style={at={(0.03,0.03)}, anchor=south west, draw=white!80.0!black},
tick align=outside,
tick pos=left,
x grid style={white!69.01960784313725!black},
xlabel={Iteration},
xmin=-2.5, xmax=52.5,
y grid style={white!69.01960784313725!black},
ylabel={Error},
ymin=1.65664285144326e-10, ymax=122.918716879743,
ymode=log,
width = .4\textwidth
]
\addplot [semithick, color0, mark=+, mark size=3, mark options={solid}]
table [row sep=\\]{%
0	35.4858225066571 \\
1	15.7026054008647 \\
2	6.00116012515238 \\
3	5.99821921921229 \\
4	5.99787539350365 \\
5	5.97325608039234 \\
6	5.97317250926151 \\
7	5.9731679206387 \\
8	5.97304871594639 \\
9	5.97302490429651 \\
10	5.97296726347928 \\
11	5.97116809975121 \\
12	5.96409323477348 \\
13	5.95810006830177 \\
14	5.94107123352348 \\
15	5.92845126895247 \\
16	5.90811211127213 \\
17	5.03899109451993 \\
18	4.99336546451801 \\
19	4.99119998073611 \\
20	4.37318991146835 \\
21	4.37311657995653 \\
22	4.37304263296317 \\
23	4.37303167746167 \\
24	4.37301526315236 \\
25	4.37285869427849 \\
26	4.37267271843344 \\
27	4.37266406889025 \\
28	4.37261204606337 \\
29	4.37242880651788 \\
30	4.37226713393097 \\
31	4.3721813901094 \\
32	4.37205888736938 \\
33	4.37204441669776 \\
34	4.3719538996089 \\
35	4.37190940854042 \\
36	4.37188480101781 \\
37	4.37177607664018 \\
38	4.37171503015207 \\
39	4.37166188614172 \\
40	4.37152204464229 \\
41	4.37147002498461 \\
42	4.37187219235468 \\
43	4.37166450747394 \\
44	4.37175500802825 \\
45	4.37151845868666 \\
46	4.37174000687502 \\
47	4.37159228782315 \\
48	4.37181437389576 \\
49	4.3717352905742 \\
50	4.37197722987192 \\
};
\addplot [semithick, color1, mark=triangle*, mark size=3, mark options={solid}]
table [row sep=\\]{%
0	6.74072073220441 \\
1	6.72774601916812 \\
2	6.71367029077907 \\
3	6.70153549599155 \\
4	6.67353327490456 \\
5	6.65776358064968 \\
6	6.60322941963257 \\
7	6.53659938416685 \\
8	6.43239001728382 \\
9	6.22592417174114 \\
10	5.85638708022942 \\
11	5.16262356067708 \\
12	3.71595436254755 \\
13	1.92387654413346 \\
14	0.925230616560257 \\
15	0.43064538122105 \\
16	0.219263446476221 \\
17	0.117432931558611 \\
18	0.0649078664716494 \\
19	0.0379049344697125 \\
20	0.0232215582438197 \\
21	0.0146719662713159 \\
22	0.00944294608167531 \\
23	0.00614734342954125 \\
24	0.0040318257805031 \\
25	0.00265759937679241 \\
26	0.00175762191494543 \\
27	0.00116478592030761 \\
28	0.000772522323339002 \\
29	0.000511930461075967 \\
30	0.000337990457385731 \\
31	0.00022102438104997 \\
32	0.000141327531655703 \\
33	8.5814817809831e-05 \\
34	4.62939126169332e-05 \\
35	1.88909030255436e-05 \\
36	4.10029213106541e-06 \\
37	2.03706232393829e-07 \\
38	5.73841605585451e-10 \\
};
\end{axis}

\end{tikzpicture}
                &
\begin{tikzpicture}

\definecolor{color0}{rgb}{0.12156862745098,0.466666666666667,0.705882352941177}
\definecolor{color1}{rgb}{1,0.498039215686275,0.0549019607843137}

\begin{axis}[
legend cell align={left},
legend entries={{SOS},{Weighted}},
legend style={draw=white!80.0!black},
tick align=outside,
tick pos=left,
x grid style={white!69.01960784313725!black},
xlabel={Iteration},
xmin=-2.5, xmax=52.5,
y grid style={white!69.01960784313725!black},
ylabel={Time step},
ymin=1e-17, ymax=2,
ymode=log,
width = .4\textwidth
]
\addplot [semithick, color0, mark=+, mark size=3, mark options={solid}]
table [row sep=\\]{%
0	1 \\
1	1 \\
2	1 \\
3	1.19209289550781e-07 \\
4	1.19209289550781e-07 \\
5	4.65661287307739e-10 \\
6	9.31322574615479e-10 \\
7	1.86264514923096e-09 \\
8	3.72529029846191e-09 \\
9	7.45058059692383e-09 \\
10	1.49011611938477e-08 \\
11	2.98023223876953e-08 \\
12	5.96046447753906e-08 \\
13	1.19209289550781e-07 \\
14	2.38418579101562e-07 \\
15	4.76837158203125e-07 \\
16	9.5367431640625e-07 \\
17	4.76837158203125e-07 \\
18	9.5367431640625e-07 \\
19	1.9073486328125e-06 \\
20	3.814697265625e-06 \\
21	5.96046447753906e-08 \\
22	2.98023223876953e-08 \\
23	5.82076609134674e-11 \\
24	1.16415321826935e-10 \\
25	2.3283064365387e-10 \\
26	4.65661287307739e-10 \\
27	9.31322574615479e-10 \\
28	1.86264514923096e-09 \\
29	3.72529029846191e-09 \\
30	3.72529029846191e-09 \\
31	5.6843418860808e-14 \\
32	1.4210854715202e-14 \\
33	2.8421709430404e-14 \\
34	5.6843418860808e-14 \\
35	5.6843418860808e-14 \\
36	5.6843418860808e-14 \\
37	1.13686837721616e-13 \\
38	1.13686837721616e-13 \\
39	2.27373675443232e-13 \\
40	4.54747350886464e-13 \\
41	2.27373675443232e-13 \\
42	1e-16 \\
43	1e-16 \\
44	1e-16 \\
45	1e-16 \\
46	1e-16 \\
47	1e-16 \\
48	1e-16 \\
49	1e-16 \\
50	1e-16 \\
};
\addplot [semithick, color1, mark=triangle*, mark size=3, mark options={solid}]
table [row sep=\\]{%
0	1 \\
1	0.0078125 \\
2	0.0078125 \\
3	0.015625 \\
4	0.015625 \\
5	0.03125 \\
6	0.03125 \\
7	0.0625 \\
8	0.125 \\
9	0.25 \\
10	0.5 \\
11	1 \\
12	1 \\
13	1 \\
14	1 \\
15	1 \\
16	1 \\
17	1 \\
18	1 \\
19	1 \\
20	1 \\
21	1 \\
22	1 \\
23	1 \\
24	1 \\
25	1 \\
26	1 \\
27	1 \\
28	1 \\
29	1 \\
30	1 \\
31	1 \\
32	1 \\
33	1 \\
34	1 \\
35	1 \\
36	1 \\
37	1 \\
38	1 \\
};
\end{axis}

\end{tikzpicture}
            \end{tabular}\\[1em]
            \begin{tabular}{ccc}
                \scalebox{.9}{    \includegraphics[width = .3\textwidth]{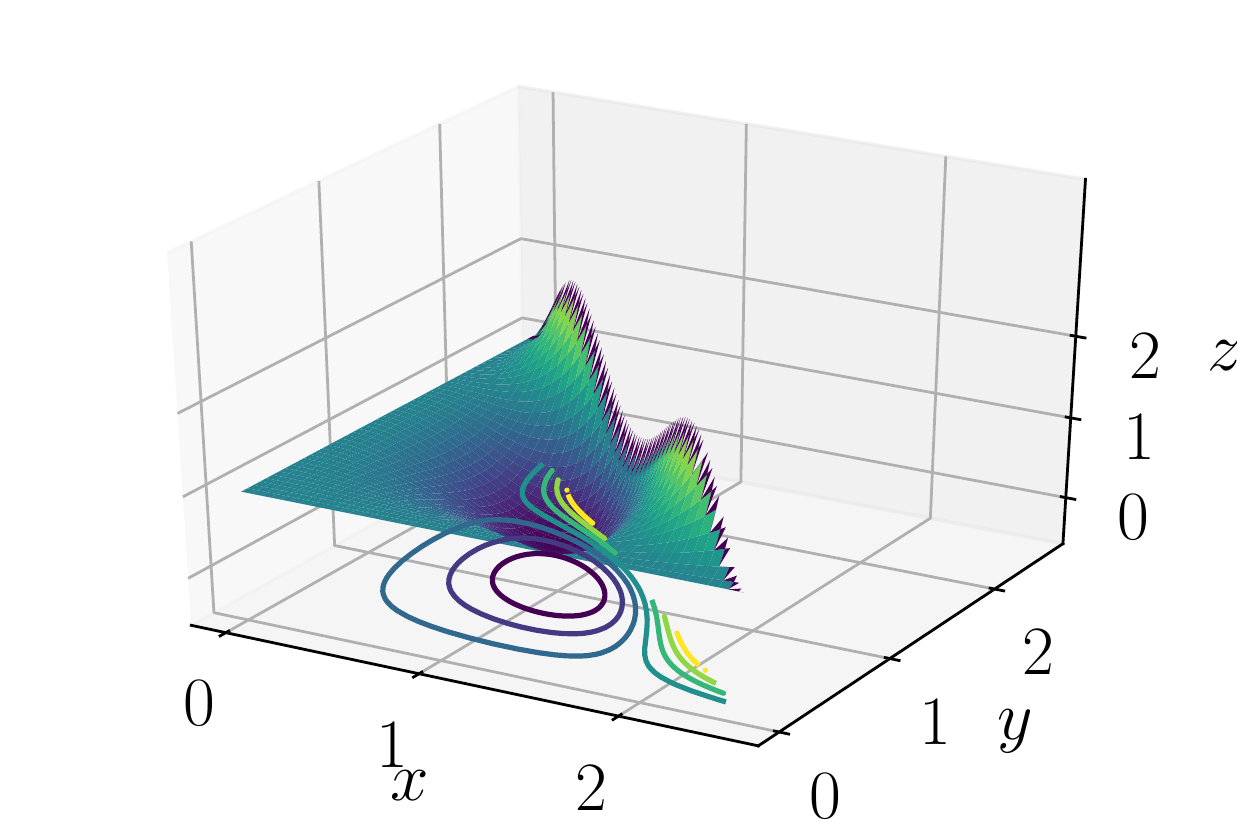} }
                &
                \scalebox{.9}{    \includegraphics[width = .3\textwidth]{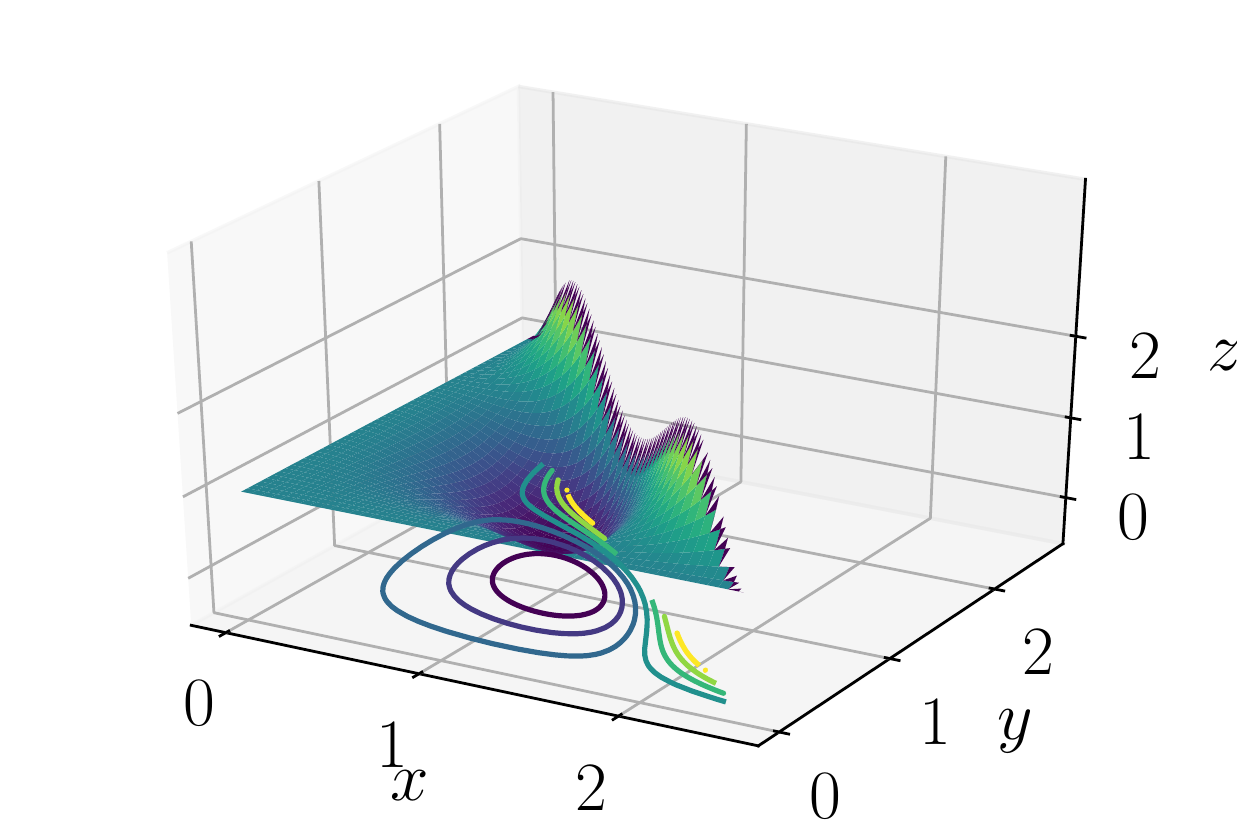} }
                &
                \scalebox{.9}{      \includegraphics[width = .33\textwidth]{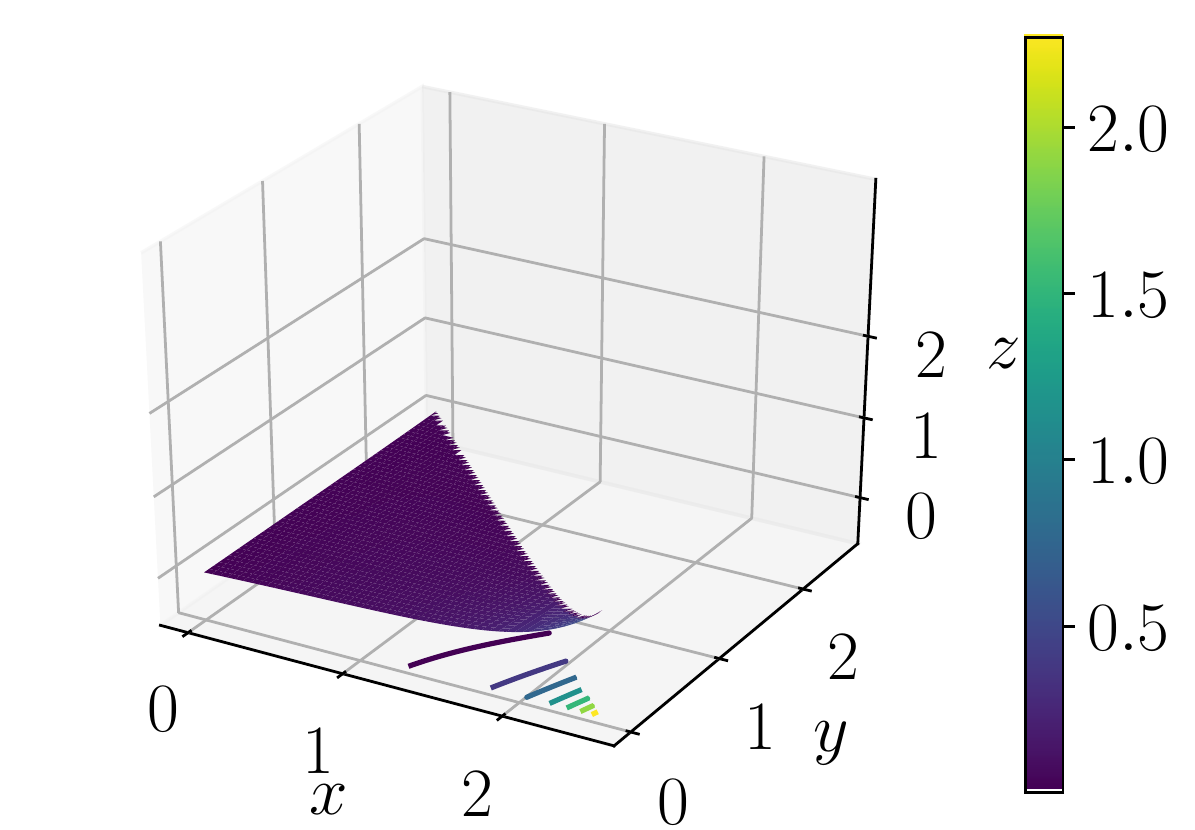} }
            \end{tabular}
        \end{tabular}
        \caption{\label{fig:test6}\textbf{Test case 6.} Bivariate sum of square approximations (of degree $n=6$) of the Motzkin polynomial. (Top left) error $\|\nabla G(\lambda_m)\|_2$ vs. iteration $m$; (Top right) time step $\tau_m$ vs. iteration $m$; (Bottom left) The Motzkin polynomial; (Bottom center) Sum of square approximation with weights $g_1 = \mu_2\,\mu_3, \ g_2 = \mu_3\,\mu_1, \ g_3 = \mu_1\,\mu_2 $ and $ g_4 = 1$, the algorithm has converged; (Bottom right) Sum of square approximation without weights ($g_1 = g_2 = g_3 = g_4 = 1$), the algorithm has not converged;}
    \end{figure} 
    
    \section{Concluding  remarks}
    
    \blue{In this paper, we reformulated the problem of computing SOS decompositions into a new nonlinear convex program. On the theoretical side we analyzed  this reformulation in detail, and particular
    the domain which guarantees convexity, 
     and showed that up to a perturbation the problem is proper strictly convex and coercive, which ensures the existence of a solution which may be explicitly approached via iterative methods. As the literature in numerical optimization is immense, we did not give an exhaustive numerical comparison of all the methods that could be used to solve our problem. Preferably, we tried to design robust methods specifically adapted to the structure of our objective function and compare it with some classical algorithms in numerical optimization. Our numerical results show that the modified Newton-Raphson algorithm is robust to compute polynomials which respect a sign condition on a given simple semi-algebraic set. However more needs to be investigated to compare with different methods and evaluate the full potential of such methods.}  Here we detail  possible domains of research  which are consequences of the multiple connections of our methods with the ones of Scientific Computing.
    \newline
    $\bullet$  It is possible to look in more details in the case $i_*\neq r_*$. It allows greater generality of the construction,
    which can be convenient for optimization purposes. In such cases, the function $G$ should be replaced by $G_\bV$.
    \newline
    $\bullet$  The technical conditions on the linear independence of the matrices $B_r$ in the multivariate case needs further examinations. In this direction there may be links with algebraic properties such as the Archimedeanity of the quadratic module associated with the weights $g_j$ (see \cite[proof of Theorem 2.14]{lasserre_2010_moments})
    or
    the condition of  Linear Independence Constraint Qualification (LICQ) \cite{nocedal_2006_numerical}.
    \newline
    $\bullet$ A C{++} implementation
    needs to be tested.
    On this basis it will be possible to couple  with codes in scientific computing 
    (such as the ones evoked in \cite{shu}
    and the references therein)
    to evaluate the gain in robustness with  the new algorithms.
    Comparisons with other established softwares like the primal-dual interior-point SDP Mosek-Yalmip package  \cite{yalmip} in Matlab will be a plus. Such benchmarks are left for future work.

    \appendix  
    \section{The asymptotic cone for univariate polynomials}\label{s:asympcone}
    
    One can obtain a much better understanding of the cone at infinity, which  exemplifies the role of the
    Lagrange interpolating polynomials.
    Given a subset $S\subset\RR^{n+1}$ one denotes by $\mathrm{coni}(S)$ the \emph{conical hull} of $S$ that is the set of linear combinations with non-negative coefficients of elements of $S$.
    The asymptotic cone  can be  constructed from the matrices
    (\ref{eq:Br1D}) or (\ref{e:hankel}) 
    in the univariate case.
    The main result  is the following, where the the Lagrange vectors are defined in 
        (\ref{e:lagrange}). 
    
    \begin{theorem}\label{t:CinfLagrange}
        The asymptotic cone of $\cD$ is generated by the Lagrange vectors 
        $L(x)$ for $0\leq x \leq 1$, that is 
        $
        C_\infty = \mathrm{coni}(\{L(x)\in\RR^{n+1}\mid x\in[0,1]\})$. 
    \end{theorem}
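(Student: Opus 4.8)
The plan is to prove the two inclusions $\mathrm{coni}(\{L(x):x\in[0,1]\})\subseteq C_\infty$ and $C_\infty\subseteq\mathrm{coni}(\{L(x):x\in[0,1]\})$ separately. The first inclusion is essentially already in hand: Lemma~\ref{l:lagrangeCinf} tells us that $L(x)\in C_\infty$ for every $x\in[0,1]$ (via $B(x)\succeq 0$ on $\mathbb K=[0,1]$), and $C_\infty$ is a closed convex cone by definition, hence stable under conical combinations. So $\mathrm{coni}(\{L(x):x\in[0,1]\})\subseteq C_\infty$, and since $C_\infty$ is closed we even get the closed conical hull contained in $C_\infty$; I will only need to check that the conical hull here is already closed, or alternatively state the result with the closed conical hull (the theorem as stated uses $\mathrm{coni}$, so I should confirm closedness is automatic — it will follow from compactness of $[0,1]$ and continuity of $x\mapsto L(x)$, once one knows the cone is finitely generated in a suitable sense, or simply by a standard argument that a conical hull of a compact set not containing the origin in its convex hull is closed).

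For the reverse inclusion, I would argue by duality. From the proof of Proposition~\ref{p:Farkas} we have the characterization
$$
C_\infty=\{\lambda\in\RR^{n+1}: \langle s_{\mathbf W},\lambda\rangle\ge 0 \text{ for all } \mathbf W\in\RR^{n+1}\},
$$
where $s_{\mathbf W}=(\langle B_r\mathbf W,\mathbf W\rangle)_r$. Dually, the cone $C_\infty$ is the polar of the closed conical hull of $\{s_{\mathbf W}:\mathbf W\in\RR^{n+1}\}$. On the other hand, the polar of $\mathrm{coni}(\{L(x):x\in[0,1]\})$ is $\{\mathbf W:\langle L(x),\mathbf W\rangle\ge 0\ \forall x\in[0,1]\}=\{\mathbf W: p_{\mathbf W}(x)\ge 0\ \forall x\in[0,1]\}$, where $p_{\mathbf W}=\sum_r W_r l_r$ is the interpolation polynomial with values $W_r$ at $x_r$; this is exactly $\sP^n_{[0,1],+}$. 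So, taking polars once more (and using that $\mathrm{coni}(\{L(x)\})$ is a closed convex cone, so it equals its bipolar), it suffices to show that the closed conical hull of $\{s_{\mathbf W}:\mathbf W\in\RR^{n+1}\}$ coincides with the polar of $\sP^n_{[0,1],+}$, i.e. with $\{\mathbf y: \langle\lambda,\mathbf y\rangle\ge0$ whenever $\langle\lambda,\mathbf{W}\rangle\ge0$ for all nonneg.\ poly.\ $\mathbf W\}$ — equivalently, with the conical hull $\mathrm{coni}(\{\mathbf y: y_r=q(x_r)^2\ \text{for some } q\})$ using the $g_1,g_2$ of \eqref{eq:weights}. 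But this is precisely the content of Corollary~\ref{cor:Farkas} together with the Markov--Luk\'acs Theorem (Proposition~\ref{t:lukacs}): every $\mathbf y$ with $y_r=p(x_r)$, $p\ge0$ on $[0,1]$, is a conical combination of squared-evaluation vectors, because $p$ has a Luk\'acs SOS representation and \eqref{e:defB2} translates that representation into the identity $\mathbf y=\sum_i s_{\mathbf Z_i}$.

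The main obstacle, and the step I would spend the most care on, is the bookkeeping of polarity and closure: one must verify that the cones in question are genuinely closed (so that bipolar equals the original cone), and that the two descriptions of $\sP^n_{[0,1],+}$ — as $\{\mathbf W:\langle L(x),\mathbf W\rangle\ge0\ \forall x\}$ on the one hand and via the Luk\'acs SOS/Farkas side on the other — are matched up correctly under the Vandermonde-type isomorphism relating $\lambda$ to its moments $(s_0,\dots,s_n)$ used in Section~\ref{s:univariate}. Concretely, I expect to invoke the Generalized Farkas Theorem (\cite[Theorem III.4.3.4]{hiriart_1993_convex}) exactly as in Proposition~\ref{p:Farkas} to close the chain of polars, and to use compactness of $[0,1]$ to guarantee that $\mathrm{coni}(\{L(x):x\in[0,1]\})$ is closed so that no spurious closure appears in the final statement. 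Everything else is a recombination of Lemma~\ref{l:lagrangeCinf}, Corollary~\ref{cor:Farkas}, and Proposition~\ref{t:lukacs}, which are available by hypothesis.
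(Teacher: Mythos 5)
Your proof is correct, but it follows a genuinely different route from the paper. The paper works with the affine slice $C_\infty^1=\{\lambda\in C_\infty: \sum_r\lambda_r=1\}$: it first invokes the truncated Hausdorff moment problem (\cite[Theorems 2.3--2.4]{krein_1977_markov}) to show that $\lambda\in C_\infty^1$ exactly when $\sum_r\lambda_rB_r=\int_{[0,1]}B(x)\,\mathrm d\sigma(x)$ for some Borel probability measure $\sigma$; this gives compactness of $C_\infty^1$; it then identifies the extreme points of $C_\infty^1$ with Dirac measures, i.e. with the Lagrange vectors $L(x_*)$, and concludes by Minkowski/Krein--Milman. You instead run the bipolar argument directly on $C_\infty=\{\lambda:\lla s_{\mathbf W},\lambda\rra\geq 0\ \forall\,\mathbf W\}$ and on $\mathrm{coni}\{L(x)\}$, and discharge the key identity $\overline{\mathrm{coni}}\{s_{\mathbf W}\}=\{\mathbf y: \lla L(x),\mathbf y\rra\geq 0\ \forall x\in[0,1]\}$ via Markov--Luk\'acs, which the paper has already stated (Proposition~\ref{t:lukacs}) and already used in the Farkas argument (Corollary~\ref{cor:Farkas}). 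Both proofs ultimately rest on a deep classical fact about positive univariate polynomials on an interval --- the moment problem on the paper's side, Luk\'acs on yours --- but yours reuses infrastructure already built in Sections 3 and 4 rather than importing new machinery, which is economical. What you lose relative to the paper is the extra structural information: identifying $L(x)$ as exactly the extreme points of the compact base $C_\infty^1$ is a finer statement than the conical-hull equality alone.

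One point you flagged yourself deserves to be fully nailed down, and your instinct there is right. The bipolar step requires $\mathrm{coni}\{L(x)\mid x\in[0,1]\}$ to be closed; the correct justification is the one you state second (the ``finitely generated'' aside should be dropped, since the generating family is a continuum): the set $S=\{L(x):x\in[0,1]\}$ is compact as the continuous image of $[0,1]$, and $0\notin\mathrm{co}(S)$ because $\lla L(x),\mathbf 1\rra=\sum_r l_r(x)=1$ for every $x$, so every convex combination of elements of $S$ has coordinate sum $1$. This is exactly the hypothesis of the standard lemma asserting closedness of the conical hull of a compact set whose convex hull avoids the origin. With that detail spelled out, the argument closes cleanly.
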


    We need some intermediate results in order to prove Theorem~\ref{t:CinfLagrange}. First, let us define
    $    C_\infty^1 = \{\lambda\in C_\infty \mid\quad \sum_{r=1}^{n+1}\lambda_r = 1\} \subset   C_\infty$.
    Since $\sum_{r=1}^{r_*}l_r(x_r) = 1$ for all $1\leq r\leq r_*$, one has $\sum_{r=1}^{r_*}l_r(X) = 1$. Therefore, with Lemma~\ref{l:lagrangeCinf}, we know that $\left\{L(x)\in\RR^{n+1}\mid x\in[0,1]\right\} \subset C_\infty^1$. The main point of the proof is to show that 
    $   C_\infty^1 \subset \left\{L(x)\in\RR^{n+1},\mid x\in[0,1]\right\}
    $.
    To do so  we identify $C_\infty^1$ with a subset of Borel probability measures on $[0,1]$ using the theory of  the moment problem for which an comprehensive reference is \cite{krein_1977_markov}.
    The proof of the Theorem invoked below in the proof 
    is strongly related to  the Lukacs decomposition of  Theorem~\ref{t:lukacs}.

    \begin{proposition}\label{p:moment}
        Let $\lambda\in\RR^{n+1}$. The following are equivalents: 
        a)  The vector $\lambda$ belongs to $C_\infty^1$; b) 
        There is a Borel probability measure $\sigma$ on $[0,1]$ such that 
        \begin{equation}
            \sum_{r=1}^{n+1}\lambda_r B_r = \int_{[0,1]}B(x) \mathrm{d}\sigma(x) .
            \label{e:measure}
        \end{equation}
    \end{proposition}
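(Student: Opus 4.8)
The plan is to translate both assertions into the language of the truncated Hausdorff moment problem on $[0,1]$ and then invoke classical moment theory. First I would exploit the block Hankel structure \eqref{e:hankel}: writing $s_i=\sum_{r=1}^{n+1}\lambda_r x_r^i$, the map $\lambda\mapsto(s_0,\dots,s_n)$ is a linear bijection (the Vandermonde argument already used in the proof of Lemma~\ref{lemma:ind}), and $\sum_r\lambda_r B_r$ is block diagonal with blocks $H_1,H_2$ whose entries are the affine combinations of $s_0,\dots,s_n$ dictated by the weights $(g_j)_j$ of \eqref{eq:weights}. Consequently, if $\bU=(\bU_1,\bU_2)$ encodes the coefficient vectors of polynomials $q_1\in\sP^{n_1}[x]$, $q_2\in\sP^{n_2}[x]$, then $\lla(\sum_r\lambda_r B_r)\bU,\bU\rra=L_s(g_1q_1^2)+L_s(g_2q_2^2)$, where $L_s:\sP^n[x]\to\RR$ is the Riesz functional $L_s(x^i)=s_i$. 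This identity is the bridge between positive semidefiniteness of $\sum_r\lambda_r B_r$ and positivity of $L_s$ on suitable cones.

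For the implication (b)$\Rightarrow$(a) I would start from a representing measure $\sigma$: the right-hand side of \eqref{e:measure} is positive semidefinite because $B(x)\succeq0$ for every $x\in[0,1]$ by Lemma~\ref{l:lagrangeCinf}, hence $\sum_r\lambda_r B_r\succeq0$, i.e. $\lambda\in C_\infty$. It then remains to get $\sum_r\lambda_r=1$: comparing diagonal entries of the two sides of \eqref{e:measure} with the explicit form of $B(x)$ shows $s_i=\int_{[0,1]}x^i\,\mathrm d\sigma(x)$ for $0\le i\le n$, so in particular $s_0=\sigma([0,1])=1$.

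For the converse (a)$\Rightarrow$(b), the key point is that by the Markov--Luk\'acs theorem (Proposition~\ref{t:lukacs}) together with the choice \eqref{eq:weights}, the cone $\sP^n_{[0,1],+}[x]$ coincides with $\{\,g_1q_1^2+g_2q_2^2:\ q_j\in\sP^{n_j}[x]\,\}$; combined with the identity above, $\lambda\in C_\infty^1$ is then equivalent to $L_s(1)=1$ together with $L_s(p)\ge0$ for every $p\in\sP^n[x]$ nonnegative on $[0,1]$ (the two separate semidefiniteness conditions $H_1\succeq0$, $H_2\succeq0$ are recovered by testing on $b=0$ and $a=0$). I would then apply the finite-dimensional truncated Riesz--Haviland / Richter theorem on the compact set $[0,1]$ (see \cite{krein_1977_markov}): such a positive functional on $\sP^n[x]$ admits a (finitely atomic) representing Borel measure $\sigma$ on $[0,1]$, which is a probability measure since $L_s(1)=1$. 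Finally, every entry of $B(x)$ is a polynomial of degree at most $n$ (because $\deg g_j+2n_j\le n$ by \eqref{eq:degqij}), so $\int_{[0,1]}B(x)\,\mathrm d\sigma(x)$ is determined by the moments $\int x^i\,\mathrm d\sigma=s_i$, $i\le n$, and these reproduce exactly the block Hankel matrix $\sum_r\lambda_r B_r$; this gives \eqref{e:measure}.

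The main obstacle I anticipate is precisely this moment-theoretic step in (a)$\Rightarrow$(b): upgrading a functional that is positive only on $[0,1]$-nonnegative polynomials of bounded degree to a genuine representing measure is delicate in the truncated regime (the full Hausdorff moment problem would be routine). What makes it go through here is the compactness of $[0,1]$ and the fact that Luk\'acs identifies $\sP^n_{[0,1],+}[x]$ exactly with the two weighted sum-of-squares pieces, so positivity on the whole cone collapses to $H_1\succeq0$, $H_2\succeq0$; I would either invoke Richter's theorem directly or cite the classical solvability criterion for the truncated Hausdorff moment problem. A secondary, routine but delicate check is the bookkeeping of indices and degrees in \eqref{e:hankel} so that the even and odd cases each match the Luk\'acs decomposition and no moment beyond $s_n$ is ever invoked.
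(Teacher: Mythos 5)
Your proposal is correct and follows essentially the same route as the paper: both reduce $\lambda\in C_\infty^1$ via the block-Hankel identity \eqref{e:hankel} to the conditions $H_1\succeq 0$, $H_2\succeq 0$, $s_0=1$, and then invoke the classical solvability criterion for the truncated Hausdorff moment problem on $[0,1]$ from Krein--Nudelman, which is exactly what \cite[Theorems 2.3, 2.4]{krein_1977_markov} state. The only genuine difference is cosmetic: the paper cites the Hankel-matrix form of that criterion directly, whereas you pass through the equivalent formulation in terms of the Riesz functional $L_s$ being nonnegative on $\sP^n_{[0,1],+}[x]$, using Markov--Luk\'acs to go back and forth. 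In this univariate compact setting the ``truncated Riesz--Haviland/Richter'' statement you appeal to is precisely the Krein--Nudelman theorem in disguise, so your concern about the truncated moment problem being delicate is resolved exactly as you anticipate, by the Luk\'acs identification collapsing positivity of $L_s$ to $H_1\succeq 0$, $H_2\succeq 0$; your extra unpacking is a correct and slightly more pedagogical version of the same argument rather than a different proof.
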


    \begin{proof}
        Using \eqref{e:hankel}, one can say that
        $\lambda\in C_\infty^1$ $\Longleftrightarrow$ $(s_0, \dots, s_n)$ are	 such that $H_1$ and $H_2$ are positive semidefinite matrices and $s_0=1$. By \cite[Theorem 2.3, Theorem 2.4]{krein_1977_markov}, this is equivalent to the existence of a Borel probability measure $\sigma$ such that \eqref{e:measure} holds.
    \end{proof}

    \begin{corollary}\label{cor:compact}
        The set $C_\infty^1$ is compact.
    \end{corollary}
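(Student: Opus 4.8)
The plan is to show that $C_\infty^1$ is closed and bounded in $\RR^{n+1}$, so that compactness follows from Heine--Borel.

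Closedness is immediate. The asymptotic cone $C_\infty$ is closed: by Lemma~\ref{l:asympcone} it equals $\{\lambda\in\RR^{n+1}\mid \sum_{r=1}^{n+1}\lambda_r B_r \succeq 0\}$, an intersection of closed half-spaces. The set $C_\infty^1$ is the intersection of $C_\infty$ with the closed affine hyperplane $\{\lambda\in\RR^{n+1}\mid \sum_{r=1}^{n+1}\lambda_r = 1\}$, hence closed.

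For boundedness I would combine Lemma~\ref{l:specialp} with Proposition~\ref{p:moment}. By Lemma~\ref{lemma:ind} the matrices $\{B_r\}_{1\le r\le n+1}$ are linearly independent, so Lemma~\ref{l:specialp} supplies a constant $c_*>0$ with $c_*\|\lambda\| \le \sum_{r=1}^{n+1}\lambda_r p_B(x_r)$ for every $\lambda\in C_\infty$, where $p_B = \mathrm{tr}(B(\cdot))$ as in Definition~\ref{d:specialPoly}. Now fix $\lambda\in C_\infty^1$ and let $\sigma$ be the Borel probability measure on $[0,1]$ provided by Proposition~\ref{p:moment}, so that $\sum_{r=1}^{n+1}\lambda_r B_r = \int_{[0,1]} B(x)\,\mathrm{d}\sigma(x)$. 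Taking traces of this identity and using $p_B = \mathrm{tr}(B(\cdot))$ gives $\sum_{r=1}^{n+1}\lambda_r p_B(x_r) = \int_{[0,1]} p_B(x)\,\mathrm{d}\sigma(x) \le \max_{x\in[0,1]} p_B(x)$, since $\sigma$ is a probability measure and $p_B\ge 0$ on $[0,1]$. Combining the two inequalities yields $\|\lambda\| \le c_*^{-1}\max_{x\in[0,1]}p_B(x)$, a bound uniform over $\lambda\in C_\infty^1$. Hence $C_\infty^1$ is bounded, and the corollary follows.

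I expect the only delicate point to be bookkeeping rather than substance: one must be sure that the hypothesis of Lemma~\ref{l:specialp} is genuinely available here — which it is, thanks to Lemma~\ref{lemma:ind} — and that the trace computation is carried out correctly from the block structure in \eqref{e:hankel} (in the odd case $p_B(x) = 1 + x^2 + \cdots + x^{2k}$, and similarly a sum of even powers in the even case), so that $\mathrm{tr}\bigl(\int_{[0,1]} B(x)\,\mathrm{d}\sigma(x)\bigr) = \int_{[0,1]} p_B(x)\,\mathrm{d}\sigma(x)$ is indeed controlled by $\|p_B\|_{L^\infty([0,1])}$ because $\sigma([0,1]) = 1$. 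An alternative route, essentially equivalent, bypasses Proposition~\ref{p:moment}: from \eqref{e:hankel} the constraint $\lambda\in C_\infty^1$ forces $s_0 = 1$ together with $H_1,H_2\succeq 0$, whence the classical Hausdorff moment inequalities give $0\le s_i\le 1$ for all $i$, and $\sum_{r}\lambda_r p_B(x_r)$ is then a bounded sum of the $s_{2\alpha}$; I would present the measure-theoretic version since Proposition~\ref{p:moment} is already established.
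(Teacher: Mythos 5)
Your proof is correct. For closedness you argue that $C_\infty$ is the intersection of closed sets (cf.\ Lemma~\ref{l:asympcone}) and that $C_\infty^1$ is its intersection with a closed hyperplane; the paper does not address closedness at all, since it has already stated that asymptotic cones are closed, so this part is fine but not where the action is.

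For boundedness you take a different route from the paper's. The paper argues directly on the moment vector: by Proposition~\ref{p:moment} each $s_i = \int_{[0,1]} x^i\,\mathrm{d}\sigma$ lies in $[0,1]$, so $(s_0,\dots,s_n)\in[0,1]^{n+1}$, and since the linear map $\lambda\mapsto(s_0,\dots,s_n)$ is invertible (Lemma~\ref{lemma:ind}) this bounds $\|\lambda\|$. You instead invoke the coercivity estimate of Lemma~\ref{l:specialp}, $c_*\|\lambda\|\le\sum_r\lambda_r p_B(x_r)$, and then bound the right-hand side by taking traces in \eqref{e:measure} and using $\sigma([0,1])=1$: $\sum_r\lambda_r p_B(x_r)=\mathrm{tr}\bigl(\sum_r\lambda_r B_r\bigr)=\int_{[0,1]}p_B\,\mathrm{d}\sigma\le\max_{[0,1]}p_B$. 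Both arguments use the same two ingredients (the moment representation from Proposition~\ref{p:moment} and the linear independence of the $B_r$'s from Lemma~\ref{lemma:ind}, the latter entering your version through the proof of Lemma~\ref{l:specialp}), but the paper's chain of inequalities is shorter and closer to the raw definition, while yours recycles the quantitative coercivity bound that the paper developed for a different purpose (Proposition~\ref{p:coercive}). That reuse is elegant and gives an explicit constant $c_*^{-1}\max_{[0,1]}p_B$ for the bound, at the cost of invoking a heavier lemma than the situation requires. Your suggested alternative at the end — reading $s_0=1$ and the positive semidefiniteness of $H_1,H_2$ off \eqref{e:hankel} to get $0\le s_i\le 1$ directly — is in fact exactly the paper's argument, modulo the appeal to the moment problem being routed through Proposition~\ref{p:moment} rather than the Hausdorff inequalities.
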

    \begin{proof}
        Since, by Proposition~\ref{p:moment}, the $s_i$'s are moments of a Borel probability measure on $[0,1]$, one has $(s_0, \dots,s_n)\in[0,1]^{n+1}$. Therefore, since $\lambda\mapsto(s_0, \dots,s_n)$ is linear and invertible (see Lemma~\ref{lemma:ind}), $C_\infty^1$ is bounded.
    \end{proof}

    We recall that a point $\lambda$ of a convex set $C$ is said to be an extreme point (see \cite[III, Definition 2.3.1]{hiriart_1993_convex}) of $C$ if for any $\lambda_1, \lambda_2\in C$ such that $\lambda = (\lambda_1 + \lambda_2)/2$, one has $\lambda = \lambda_1 = \lambda_2$. We denote by $\mathrm{ext}(C)$  the set of extreme points of $C$.
    
    \begin{proposition}\label{p:extreme} The set of extreme points of $C_\infty^1$ is 
        $
        \mathrm{ext}(C_\infty^1) = \{L(x)\mid x\in[0,1]\} $.
    \end{proposition}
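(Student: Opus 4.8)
<br>

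The plan is to identify $C_\infty^1$ with a compact convex subset of Borel probability measures on $[0,1]$ via Proposition~\ref{p:moment}, and then characterize its extreme points by exploiting the interplay between extreme points of the convex body, extreme (i.e.\ finitely supported) representing measures, and the Lukacs/Markov decomposition of Theorem~\ref{t:lukacs}. Concretely, I would first note the inclusion $\{L(x)\mid x\in[0,1]\}\subset C_\infty^1$, which was already observed in the excerpt (from $\sum_r l_r(X)=1$ and Lemma~\ref{l:lagrangeCinf}). So the content is the reverse: every extreme point of $C_\infty^1$ is of the form $L(x_0)$ for some $x_0\in[0,1]$, and conversely each such $L(x_0)$ is genuinely extreme.

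First I would show each $L(x_0)$ is an extreme point. Suppose $L(x_0)=(\lambda_1+\lambda_2)/2$ with $\lambda_1,\lambda_2\in C_\infty^1$. By Proposition~\ref{p:moment}, $\lambda_1,\lambda_2$ correspond to Borel probability measures $\sigma_1,\sigma_2$ on $[0,1]$ via \eqref{e:measure}, and $L(x_0)$ corresponds to the Dirac mass $\delta_{x_0}$ (since $B(x_0)=\sum_r l_r(x_0)B_r$ and $l_r(x_0)=\delta_{rx_0}$ when $x_0$ is an interpolation point; in general one still has $\sum_r l_r(x_0)B_r=B(x_0)=\int B\,\mathrm d\delta_{x_0}$, so $\delta_{x_0}$ is a representing measure). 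The map sending a probability measure to the moment data $(s_0,\dots,s_n)$, then to $\lambda$ via the inverse Vandermonde (Lemma~\ref{lemma:ind}), is affine and injective on $C_\infty^1$; hence $(\lambda_1+\lambda_2)/2=L(x_0)$ forces $(\sigma_1+\sigma_2)/2$ to have the same first $n+1$ moments as $\delta_{x_0}$. But $\delta_{x_0}$ is the unique measure with its moments up to order $n$ supported in $[0,1]$ once one knows, e.g., from the strict positivity structure of the associated Hankel forms that the representing measure is determined; more elementarily, a probability measure $\mu$ on $[0,1]$ with $\int x\,\mathrm d\mu = x_0$ and $\int x^2\,\mathrm d\mu = x_0^2$ must equal $\delta_{x_0}$ by the equality case of Jensen / Cauchy--Schwarz. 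Therefore $\sigma_1=\sigma_2=\delta_{x_0}$, so $\lambda_1=\lambda_2=L(x_0)$, and $L(x_0)\in\mathrm{ext}(C_\infty^1)$.

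Next I would show any extreme point $\lambda^*$ of $C_\infty^1$ equals some $L(x_0)$. Take a representing measure $\sigma$ for $\lambda^*$ as in \eqref{e:measure}; among all such $\sigma$ choose one with minimal support cardinality (this minimum exists and is finite, since the moment problem on $[0,1]$ with data of order $n$ always admits a representing measure supported on at most $\lceil (n+2)/2\rceil$ points, by the classical Markov/Krein theory used in Proposition~\ref{p:moment} and Theorem~\ref{t:lukacs}). If $\sigma=\sum_{k=1}^m \alpha_k\delta_{t_k}$ with $m\ge 2$ and distinct $t_k\in[0,1]$, then writing $\sigma = \alpha_1\delta_{t_1} + (1-\alpha_1)\sigma'$ with $\sigma'$ a probability measure, and pushing through the affine identification, gives $\lambda^* = \alpha_1 L(t_1) + (1-\alpha_1)\mu'$ where $\mu'\in C_\infty^1$ corresponds to $\sigma'$; since $0<\alpha_1<1$ and $L(t_1)\ne\mu'$ (their representing measures $\delta_{t_1}$ and $\sigma'$ differ because $\sigma'$, having moments distinct from $\delta_{t_1}$ as $m\ge2$, is not $\delta_{t_1}$), this is a nontrivial convex combination, contradicting extremality. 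Hence $m=1$, $\sigma=\delta_{x_0}$, and $\lambda^*=L(x_0)$. The main obstacle I anticipate is the careful justification of the affine injective identification of $C_\infty^1$ with a set of probability measures determined by their order-$n$ moments, and in particular pinning down uniqueness of the representing measure at the right places — one must invoke the structural results of \cite{krein_1977_markov} (as in Proposition~\ref{p:moment}) rather than generic moment-problem facts, because the matrices $H_1,H_2$ encode precisely the Markov--Lukacs constraints and it is their rank/boundary behavior that controls whether the representing measure is unique or a principal representation. Once the extreme-point set is identified, Theorem~\ref{t:CinfLagrange} follows by Corollary~\ref{cor:compact} (compactness of $C_\infty^1$) together with the Krein--Milman theorem applied to $C_\infty^1$ and then conification: $C_\infty = \mathrm{coni}(C_\infty^1) = \mathrm{coni}(\mathrm{ext}(C_\infty^1)) = \mathrm{coni}(\{L(x)\mid x\in[0,1]\})$.
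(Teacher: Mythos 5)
Your proof is correct and follows the same overall strategy as the paper's: use Proposition~\ref{p:moment} to identify $C_\infty^1$ with a family of Borel probability measures on $[0,1]$, and show that extremality forces the representing measure to be a Dirac mass. Where you genuinely diverge is in how you obtain a \emph{finitely supported} representing measure for an extreme point. You invoke the Tchebyshev--Markov--Krein principal-representation theorem to get a measure with $O(n)$ atoms, pick one of minimal support, and if it has two or more atoms you split one off as $\sigma=\alpha_1\delta_{t_1}+(1-\alpha_1)\sigma'$ and push the affine identification through to contradict extremality. The paper instead uses a boundary argument: since an extreme point lies on the relative boundary of $C_\infty^1$, there is $\bV\neq 0$ with $\lla\sum_r\lambda_r B_r\bV,\bV\rra=0$; putting $q(X)=\lla B(X)\bV,\bV\rra\geq 0$, a polynomial of degree at most $n$, the vanishing of $\int q\,d\sigma$ forces $\sigma$ to be supported on the zero set of $q$, hence on at most $n$ points, and then the paper contradicts extremality by perturbing mass between two atoms rather than splitting one off. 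Both reductions are correct. The paper's route buys a self-contained degree-$n$ bound from a single nonnegative polynomial and avoids invoking the principal-representation theorem; your route avoids the ``extreme $\Rightarrow$ boundary'' observation at the cost of a heavier black box from \cite{krein_1977_markov}. Your converse direction (each $L(x_0)$ is extreme because $\delta_{x_0}$ is determinate on $[0,1]$ via the equality case of Jensen on the second moment) is the same as the paper's, and you usefully make explicit the determinacy point that the paper leaves tacit; do note that the same observation is already needed in your forward direction to justify that $\mu'\neq L(t_1)$, namely that a probability measure on $[0,1]$ other than $\delta_{t_1}$ has a moment vector of order $\leq 2$ distinct from that of $\delta_{t_1}$ (it either has a different mean, or a strictly positive variance). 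As in the paper, the argument implicitly assumes $n\geq 2$ so that the second moment is part of the data, which is harmless since the case $n\leq 1$ is degenerate.
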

    \begin{proof}
        Let $\lambda\in\mathrm{ext}(C_\infty^1)$. Since extreme points of a convex set are located on its boundary there is a vector $\bV\neq 0$ such that $\lla\sum_{r=1}^{n+1}\lambda_r B_r \bV, \bV\rra  =0$. Let $\sigma$ be a Borel measure satisfying \eqref{e:measure} and define $q(X) = \lla B(X) \bV, \bV\rra\geq0$. One has 
        $
        \int_{[0,1]}q(x) \mathrm{d}\sigma(x)  = 0 $. 
        Since $q$ is not identically zero, the measure $\sigma$ must be supported on a subset of the finite set of roots of $q$ intersected with $[0.1]$. Since $q$ has degree $n$, $\sigma$ has the form
        $
        \sigma = \sum_{k = 1}^{n} \alpha_k \delta_{x_k} $ where $ \sum_{k = 1}^{n} \alpha_k = 1 $, $ 0 \leq \alpha_k \leq 1 $ and $ x_k\in[0,1] $, 
        for some distinct $x_1,\dots,x_n$ and where $\delta_{x_k}$ is the Dirac measure at $x_k$. Now assume that for some index $k$, $\alpha_k\in(0,1)$. Then there is $k'\neq k$ such that $\alpha_{k'}\in(0,1)$. Then let $0\leq\varepsilon <\min(\alpha_k, \alpha_{k'},1-\alpha_k, 1-\alpha_{k'})$ and define $\sigma_1 = \sigma - \varepsilon\delta_{x_k} + \varepsilon \delta_{x_{k'}}$ and $\sigma_2 = \sigma + \varepsilon\delta_{x_k} - \varepsilon \delta_{x_{k'}}$. The measures $\sigma_1$ and $\sigma_2$ are two Borel probability measures generating different sets of moments for at least some $\varepsilon$ in the range. Since $\lambda\mapsto(s_0, \dots,s_n)$ is linear and invertible there are distinct $\lambda_1, \lambda_2\in C_\infty^1$ satisfying \eqref{e:measure} for the respective measures $\sigma_1$ and $\sigma_2$ and one has $\lambda = (\lambda_1 + \lambda_2)/2$. There is a contradiction. Therefore  either $\alpha_k=0$ or $\alpha_k=1$ so  $\sigma$ must be a dirac measure at some point $x_*\in[0,1]$. Hence $\sum_{r=1}^{n+1}\lambda_r B(x_r) = B(x_*)$ so in particular
        $
        \sum_{r=1}^{n+1}\lambda_r x_r^k = x_*^k $ for any $ 0\leq k\leq n$
        which yields $\lambda = L(x_*)$.
        Conversely if $\lambda = L(x_*)$ and $\lambda = (\lambda_1 + \lambda_2)/2$, then there are probability measures $\sigma_1$ and $\sigma_2$ such that $\delta_{x_*} = (\sigma_1  + \sigma_2)/2$. Therefore $\sigma_1$ and $\sigma_2$ are supported at $x_*$ and since they have the same mass one has $\delta_{x_*} = \sigma_1 = \sigma_2$, so $\lambda\in\mathrm{ext}(C_\infty^1)$.
    \end{proof}
    
    \begin{proof}[Proof of Theorem~\ref{t:CinfLagrange}]
        Denote by $\mathrm{co}(S)$ the \emph{convex hull} of $S$, the set of linear combinations of elements of $S$ with non-negative coefficients whose sum equals $1$. By the Minkowski (or Krein-Milman) theorem \cite[III, Theorem 2.3.4]{hiriart_1993_convex},  any compact convex set is the convex hull of its extreme points, therefore $C_\infty^1 = \mathrm{co}(\mathrm{ext}(C_\infty^1))$. Remark that  $
        C_\infty = \bigcup_{t\geq0}t C_\infty^1
        $  and the result follows.
    \end{proof}

    \bibliographystyle{siamplain}
    \bibliography{bibli}
    
\end{document}